\def\wh{\widehat}
\def\wt{\widetilde}
\def\R{\mathbb R}
\def\Z{\mathbb Z}
\def\N{\mathbb N}
\def\A{\mathbb A}
\def\x{\mathbf x}
\def\y{\mathbf y}
\def\n{\mathbf n}
\def\m{\mathbf m}
\def\k{\mathbf k}
\def\w{\mathbf w}
\def\z{\mathbf z}
\def\D{\mathbf D}
\def\1{\mathbf 1}
\def\H{\mathfrak H}
\def\bxi{\boldsymbol \xi}
\def\1{\bold 1}
\def\eps{\varepsilon}
\def\Dom{\mathrm{Dom}\,}
\def\Ker{\mathrm{Ker}\,}
\def\leq{\leqslant}
\def\le{\leqslant}
\def\ge{\geqslant}
\definecolor{darkred}{rgb}{0.9,0.1,0.1}
\theoremstyle{theorem}
\newtheorem{theorem}{Theorem}[section]
\newtheorem{proposition}[theorem]{Proposition}
\newtheorem{lemma}[theorem]{Lemma}
\newtheorem{remark}[theorem]{Remark}
\numberwithin{equation}{section}
\theoremstyle{plain}
\newtoks\thehProclaim
\newtheorem*{Proclaim}{\the\thehProclaim}
\begin{document}
\thispagestyle{empty}

\bigskip

\centerline{\textbf{Operator estimates in homogenization of L\'evy-type operators}}
\centerline{\textbf{with periodic coefficients}}

\bigskip
\def\supind#1{${}^\mathrm{#1}$}
\centerline{\textbf{A.~Piatnitski\supind{1,2}, V.~Sloushch\supind{3},
T.~Suslina\supind{3}, E.~Zhizhina\supind{1,2}}}

\bigskip
\centerline{\supind{1} The Arctic University of Norway, UiT, campus Narvik,}
\centerline{Lodve Langes gate 2, Narvik 8517, Norway}

\bigskip
\centerline{\supind{2}Higher School of Modern Mathematics MIPT,}
\centerline{1st Klimentovskiy per., 115184  Moscow, Russia }

\bigskip
\centerline{\supind{3}St.~Petersburg State University,}
\centerline{Universitetskaya nab. 7/9, St.~Petersburg, 199034, Russia }

\bigskip

\centerline{e-mail: apiatnitski@gmail.com}
\centerline{e-mail: v.slouzh@spbu.ru}
\centerline{e-mail: t.suslina@spbu.ru}
\centerline{e-mail: elena.jijina@gmail.com}

\bigskip
\bigskip
\noindent
%\begin{abstract}
{\bf Abstract.}\\
The paper deals with homogenization of self-adjoint operators in $L_2(\mathbb R^d)$ of the form
$$
({\mathbb A}_\eps u) (\x) =  \int_{\R^d} \mu(\x/\eps, \y/\eps) \frac{\left( u(\x) - u(\y) \right)}{|\x - \y|^{d+\alpha}}\,d\y,
$$
where $0< \alpha < 2$, and $\eps>0$ is a small  parameter.
It is assumed that the function $\mu(\x,\y)$ is $\Z^d$-periodic in each variable, $\mu(\x,\y)=\mu(\y,\x)$ for all $\x$ and $\y$,
and $0< \mu_- \leqslant \mu(\x,\y) \leqslant \mu_+< \infty$.
Under these assumptions we show that the resolvent $({\mathbb A}_\eps + I)^{-1}$ converges, as $\eps\to0$, in the operator
norm in $L_2(\R^d)$ to the resolvent $({\mathbb A}^0 + I)^{-1}$ of the limit operator ${\mathbb A}^0$
given by
$$
({\mathbb A}^0 u) (\x) =  \int_{\R^d} \mu^0 \frac{\left( u(\x) - u(\y) \right)}{|\x - \y|^{d+\alpha}}\,d\y,
$$
where $\mu^0$ is the mean value of  $\mu(\x,\y)$.
We also show that the operator norm of the discrepancy $\|({\mathbb A}_\eps + I)^{-1} - (\A^0 + I)^{-1}\|_{L_2(\mathbb R^d)\to L_2(\mathbb R^d)}$ can be estimated  by $O(\eps^\alpha)$, if  $0< \alpha < 1$, by $O(\eps (1 + | \operatorname{ln} \eps|)^2)$,
if $ \alpha =1$, and by $O(\eps^{2- \alpha})$,  if $1< \alpha < 2$.
%\end{abstract}

\bigskip
\noindent\textbf{Keywords}:
L\'evy-type operators, periodic homogenization, operator estimates of discrepancy, effective operator.

%{\color{cyan}
%}

\section*{Introduction}

In this work we consider homogenization problems for operators of L\'evy type with periodic coefficients.
Our main goal is to obtain estimates for the rate of convergence in the operator norms.

\subsection{Problem setup.  Main result}
This work deals with unbounded nonlocal L\'evy-type operators  $\A_\eps = \A_\eps(\alpha,\mu)$ in $L_2(\R^d)$ that
are formally defined by
 \begin{equation}
 \label{Aeps_Intro}
 (\mathbb{A}_\eps u)( \x)= \int\limits_{\mathbb R^d}  \mu ( \x/\eps,
  \y/\eps)\frac{( u(\x)-u(\y))}{|\x - \y|^{d+\alpha}}\,d\y,\ \ \x\in\mathbb{R}^{d};
 \end{equation}
here $0< \alpha <2$,  $\eps$ is a small positive parameter, and $\mu(\x,\y)$ is
bounded positive definite function, which is
$\mathbb{Z}^{d}$-periodic both in $\x$ and $\y$. Moreover, we assume that $\mu(\x,\y)=\mu(\y,\x)$.
In order to introduce  $\A_\eps$ in a rigorous way we consider the closed quadratic form
\begin{equation*}
\label{q_form_Intro}
a_\eps [u,u] := \frac{1}{2} \intop_{\R^d} \intop_{\R^d} d\x\,d\y\, \mu(\x/\eps,\y/\eps) \frac{|u(\x)-u(\y)|^2}{|\x - \y|^{d+\alpha}},\ \ u\in H^{\frac\alpha2}(\R^d),
\end{equation*}
and define $\A_\eps$ as the non-negative self-adjoint operator generated by this form.

%{\color{magenta}
Notice that the quadratic form $a_\eps [u,u]$ with the domain
$H^{\frac{\alpha}{2}}(\mathbb R^d)$ is a regular Dirichlet form.
The Markov process (Hunt process) that corresponds to this form
is a Markov  jump process, its infinitesimal generator coincides with
$-\A_\eps$. A detailed description of the properties of such an operator
can be found in the work  \cite{BBZK}. Since in this paper the probabilistic interpretation
of solutions is not used, we do not  provide this description here.
The kernel of the operator $\A_\eps$ exhibits a power-law decay at infinity, its second moment is not finite.
An important characteristic of the trajectories of such processes is the presence of long-distance jumps (L\'evy flights).
This makes an essential difference with the behaviour of trajectories of diffusion processes.
Since the quadratic form $a_\eps [u,u]$ is comparable with the quadratic form of the fractional Laplacian
$(-\Delta)^\frac\alpha2 $, then, abusing slightly the terminology, we call a process that corresponds
to the form $a_\eps [u,u]$ a L\'evy-type process, and $\A_\eps$ -- a L\'evy-type operator.
Currently, L\'evy processes are widely used in describing the behaviour of complex systems in which
long distance interactions play an important and sometimes a key role.
In particular, various models in population biology and ecology, in astrophysics,  mechanics of porous media
and financial mathematics are based on these processes, see, for instance,   \cite{CT, EP, HS, NS, UZ, W}.
When studying similar models in heterogeneous media, we arrive  at the processes with a generator $-\A_\eps$
defined in \eqref{Aeps_Intro}.
%}
%{\color{blue}
%}

Homogenization problem for the operator $\A_\eps$  was addressed in \cite{KaPiaZhi19},  where it was shown that,
as $\eps\to0$, the resolvent  $(\A_\eps + I)^{-1}$  converges strongly in $L_2(\mathbb R^d)$ to the resolvent
$(\A^0 + I)^{-1}$ of the effective operator $\A^0$. This operator, being an operator of the same form as $\A_\eps$,
has a constant coefficient
$$
\mu^0 = \intop_\Omega \intop_{\Omega} \mu(\x,\y)\,d\x\,d\y.
$$
Notice that such an operator coincides with the fractional Laplacian up to a multiplicative constant:
$\A^0 = \mu^0 c_0(d,\alpha) (-\Delta)^{\frac\alpha2}$, $\Dom \A^0 = H^{\alpha}(\R^d)$.

Our \emph{main result}, Theorem \ref{teor3.1},  states that the resolvent
$(\A_\eps + I)^{-1}$ converges to the resolvent of the effective operator in the operator norm in $L_2(\mathbb R^d)$, moreover, the following estimate for the rate of convergence holds:
\begin{equation}\label{e3.1_Intro}
\|(\A_{\varepsilon}+I)^{-1}-(\A^{0}+I)^{-1}\|_{L_2(\R^d) \to L_2(\R^d)}\le
{\mathrm C}(\alpha,\mu) \begin{cases} \eps^\alpha, &\hbox{if } 0 < \alpha <1,\\
\eps (1 + | \operatorname{ln} \eps|)^2, &\hbox{if } \alpha =1,
\\ \eps^{2 - \alpha}, &\hbox{if } 1< \alpha < 2.
\end{cases}
\end{equation}

\subsection{Homogenization theory methods based on Floquet--Bloch representation. Operator estimates.}\label{Sec0.1}

Currently, homogenization theory is a well-developed area of mathematics that comprises various methods and techniques,
see, for instance, the monographs \cite{BaPa}, \cite{BeLP}, \cite{JKO}.
One of the important approaches in this area relies on the Floquet--Bloch theory
and on the Gelfand transform.    The first rigorous homogenization result obtained by this method can be found in \cite{Sev},
where for a second order uniformly elliptic operator with periodic coefficients the strong resolvent convergence
in $L_2(\mathbb R^d)$ was proved.     Later on this approach was further developed in \cite{AlCo98}, \cite{AlCoVa98}, \cite{COrVa}, \cite{Zh89} and in other papers. The cited works studied various homogenization problems for differential operators with periodic microstructure,
among them are boundary-value problems for elliptic operators, related spectral problems, operators in perforated domains and fluid mechanics problems.
Notice, however, that the mentioned works were aimed at proving strong resolvent convergence.

In \cite{BSu1, BSu3, BSu4}  Birman and Suslina introduced a new approach to the study of homogenization problems
for  differential operators with periodic coefficients. This approach is based on a version of the  spectral method. The advantage of this approach is that it allows to obtain
sharp estimates for the rate of convergence \emph{in the operator norms}  for a wide class
of homogenization problems in periodic environments.
To illustrate this approach we consider in $L_2(\mathbb R^d)$ a scalar uniformly elliptic operator of the form
$\mathcal{A}_\eps=-\mathrm{div} \, g\big(\frac \x\eps\big)\nabla, \, \eps>0$, with  $\mathbb Z^d$-periodic coefficients.
As follows from classical homogenization results,  for such an operator the strong resolvent convergence holds,
as $\eps\to0$, and the limit operator has the form $\mathcal{A}^0 = - \operatorname{div} g^0 \nabla$, where $g^0$ is a constant positive definite \emph{effective matrix}.  See, for instance, \cite{BeLP} for further details.
As was shown in  \cite{BSu1} a more advanced convergence result holds. Namely, the resolvent $(\mathcal{A}_\eps +I)^{-1}$  converges to the resolvent of the limit operator
in the operator norm in $L_2(\R^d)$, and the following estimate is valid:
\begin{equation}
\label{BSu1}
\| ( \mathcal{A}_\eps +I)^{-1} - (\mathcal{A}^0 +I)^{-1}\|_{L_2(\R^d) \to L_2(\R^d)} \leqslant C \eps.
\end{equation}

The estimates of this type are called operator estimates for the rate of convergence in homogenization theory.
In  \cite{BSu3}  a more detailed asymptotics of the resolvent  $(\mathcal{A}_\eps +I)^{-1}$ that includes additional terms with correctors and provides
an approximation precision of order  $O(\eps^2)$ in the operator norm in
$L_2(\mathbb R^d)$ was constructed.
An approximation of the resolvent  $( \mathcal{A}_\eps +I)^{-1}$  in the norm of operators acting from $L_2(\R^d)$ into the
Sobolev space $H^1(\R^d)$ was obtained in \cite{BSu4}.

The operator-theoretic approach is based on the scaling transformation,  Floquet--Bloch theory and  analytic perturbation theory.
To clarify the method, we provide here the derivation of  estimate \eqref{BSu1}.
The scaling transformation reduces estimate  \eqref{BSu1} to the following inequality:
\begin{equation}
\label{BSu2}
\| (\mathcal{A} + \eps^2 I)^{-1} - (\mathcal{A}^0 + \eps^2 I)^{-1}\|_{L_2(\R^d) \to L_2(\R^d)} \leqslant C \eps^{-1},
\end{equation}
where $\mathcal{A} = - \operatorname{div} g(\x) \nabla = \D^* g(\x) \D$, $\D = -i \nabla$.
With the help of the unitary Gelfand transform the operator $\mathcal{A}$ can be expanded into a direct integral
over the family of  operators  $\mathcal{A}(\bxi)$  in $L_2(\Omega)$ with $\bxi \in \widetilde{\Omega}$, $\bxi$ is called quasi-momentum;
here  $\Omega = [0,1)^d$  is the unit cell of the lattice  $\Z^d$, $\widetilde{\Omega} = [-\pi,\pi)^d$ is the cell of the dual lattice, and the operator $\mathcal{A}(\bxi)$  is defined by $\mathcal{A}(\bxi) = (\D + \bxi)^* g(\x) (\D+ \bxi)$ with periodic boundary conditions.
Estimate \eqref{BSu2} follows from similar estimates for the operators depending on quasi-momentum:
\begin{equation*}
\| (\mathcal{A}(\bxi) + \eps^2 I)^{-1} - (\mathcal{A}^0(\bxi) + \eps^2 I)^{-1}\|_{L_2(\Omega) \to L_2(\Omega)} \leqslant C \eps^{-1},
\quad \bxi \in \widetilde{\Omega}.
\end{equation*}
An important part of the study is the analysis of the family of operators  $\mathcal{A}(\bxi)$.
Since $\{\mathcal{A}(\bxi)\}$ is an analytic family of operators with a compact resolvent,
the methods of analytic perturbation theory apply. It turned out that the resolvent
\hbox{$(\mathcal{A}(\bxi) + \eps^2 I )^{-1}$} can be approximated in terms of the spectral characteristics
 of the operator at its spectral edge. Therefore, homogenization represents a  \emph{spectral threshold effect} at the
 spectral edge of the elliptic operator.

Another method of obtaining operator estimates for the rate of convergence in homogenization problems, the so-called
shift method, was developed in the works of Zhikov and Pastukhova, see  \cite{Zh, ZhPas1}, as well as the survey \cite{ZhPas3} and the literature cited there.

Recent years operator estimates for the rate of convergence in various homogenization problems for differential operators
have attracted the attention of an increasing number of researchers. A number of deep and meaningful results have been
obtained in this field. A detailed overview of the current state of this area can be found in  \cite[Introduction]{Su_UMN2023}.

\subsection{Existing homogenization results for convolution-type  operators}
For the first time, operator estimates for the rate of convergence in homogenization problems for
\emph{nonlocal convolution-type periodic operators} were obtained in recent works
of the authors  \cite{PSlSuZh, PSlSuZh2}. These works focused on convolution-type operators
in $L_2(\R^d)$ that read
 \begin{equation}
 \label{Sus1}
 ({A}_\eps u)( \x)=\frac1{\eps^{d+2}}\int\limits_{\mathbb R^d} a((\x-\y)/\eps) \mu ( \x/\eps,
  \y/\eps)\big( u(\x)-u(\y)\big)\,d\y,\ \ \x\in\mathbb{R}^{d},\ \ u\in L_{2}(\mathbb{R}^{d});
 \end{equation}
here $\eps >0$ is a small parameter.
These operators were studied under the assumptions that $a(\x)$ is an even, non-negative integrable function,
and the function  $\mu(\x,\y)$ is bounded, positive definite, symmetric in $\x$ and $\y$, and $\mathbb{Z}^{d}$-periodic
in each variable. Under these conditions the operator  ${A}_{\eps}$ is bounded in $L_2(\mathbb R^d)$, self-adjoint
and non-negative.
In addition, it was assumed that the moments $M_{k}(a) =\int_{\mathbb{R}^{d}}|\x|^{k}a(\x)\,d\x$ are finite up to order
$3$ or $4$.
Convolution-type operators with integrable kernels arise in the models of mathematical biology and population
dynamics, recent years these models were actively studied in the mathematical literature, see \cite{KPMZh, PZh, PiaZhi19}.
Periodic homogenization problem for such operators was considered in the work \cite{PZh}, where, under the assumption that $M_2(a) < \infty$, it was shown that the resolvent $({A}_{\varepsilon}+I)^{-1}$ converges strongly in $L_2 (\mathbb R^d)$ to the resolvent $({A}^{0}+I)^{-1}$ of the effective operator. The effective operator has the form
 ${A}^{0}=-\operatorname{div}g^{0}\nabla$ with a positive definite constant matrix $g^0$.
 It is interesting to observe that in these models, although the original operator ${A}_{\eps}$ is nonlocal
 and bounded, the homogenized one is local and unbounded.

Homogenization problems for convolution-type operators with non-symmetric kernels were studied in   \cite{PiaZhi19},
where, for the corresponding parabolic semigroups, the convergence result was obtained in moving coordinates.
Similar problems in perforated domains were investigated by variational methods in  \cite{BraPia21}.

In the article  \cite{PSlSuZh}, under the assumption that  $M_3(a) < \infty$, the authors proved
the following sharp in order estimate for the rate of convergence:
\begin{equation*}\label{Sloushch2}
\|({A}_{\varepsilon}+I)^{-1}-({A}^{0}+I)^{-1}\|_{L_{2}(\mathbb{R}^{d})\to
L_{2}(\mathbb{R}^{d})}\leqslant C(a,\mu)\varepsilon.
\end{equation*}
Then in \cite{PSlSuZh2}, under the additional condition $M_4(a)<\infty$,  the convergence result was improved by using the correctors.

In   \cite{PSlSuZh,  PSlSuZh2} the operator-theoretic approach developed by M. Birman and T. Suslina in \cite{BSu1} for differential operators was modified and adapted to the case of nonlocal convolution-type operators with integrable kernels.
As in the case of differential operators, the homogenization problem is reduced to studying the operator family $A(\bxi)$,
$\bxi\in\widetilde\Omega$,
obtained from the original operator by the scaling transformation and the Gelfand transform. However, in contrast with
the differential operators, this family is not analytic, and thus the analytic perturbation theory does not apply.
Instead, the authors used the finite smoothness of $A(\bxi)$  granted by the moment condition.
%{\color{cyan}
%}

\subsection{Methods} In order to investigate the limit behaviour of the resolvent of $\A_\eps$ we modify the operator-theoretic approach and adapt it to the case of L\'evy-type operators.

At the first step, making the scaling transformation we obtain
\begin{equation}\label{e3.2_Intro}
\|(\A_{\varepsilon}+I)^{-1}-(\A^{0}+I)^{-1}\|_{L_2(\R^d) \to L_2(\R^d)} = \eps^\alpha
\|(\A + \eps^\alpha I)^{-1}-(\A^{0}+ \eps^\alpha I)^{-1}\|_{L_2(\R^d) \to L_2(\R^d)};
\end{equation}
here $\A = \A_{\eps_0},\ \eps_0 =1$.

Then we apply the Gelfand transform and represent the operator  $\A$ as a direct integral over the family of operators $\A(\bxi)$  in $L_2(\Omega)$ that depend on the parameter $\bxi \in \wt{\Omega}$. For each $\bxi \in \wt{\Omega}$ the spectrum of $\,A(\bxi)$
is discrete and belongs to $\mathbb R_+$; for small $|\bxi|$ the first eigenvalue is of order  $O(|\bxi|^\alpha)$,
while the other eigenvalues are uniformly positive.

Now the question of the limit behaviour of $(\mathbb{A}_\eps+ I)^{-1}$, as $\eps\to0$, is reduced to studying the asymptotics
of the resolvent \hbox{$(\mathbb{A}(\bxi)+\eps^{\alpha}I)^{-1}$} for small $\eps>0$.
It is clear that the main contribution to the asymptotics under consideration  comes from the bottom part of the spectrum of  $\A(\bxi)$. It should be emphasized that, in contrast with the case of differential operators, the family  $\A(\bxi)$
is not analytic, and for  $\alpha\in (0,1)$ not even differentiable.
In this way the studied L\'evy-type operators differ essentially  from the convolution-type operators of the form  \eqref{Sus1},
for which the finite differentiability of  $\A(\bxi)$ is ensured by the finiteness of an appropriate number of moments of
 $a(\x)$.
Nevertheless, we succeeded to obtain the ''threshold approximations'' required for constructing an approximation
of the resolvent  $(\mathbb{A}(\bxi)+\eps^{\alpha}I)^{-1}$ for small $\eps$.
Namely, we obtained approximations of the operators $F(\bxi)$ and $\mathbb{A}(\bxi)F(\bxi)$, as $\bxi\to0$.
Here $F(\bxi)$ is a spectral projector of the operator  $\mathbb{A}(\bxi)$ that corresponds to a neighbourhood of zero.
In the existing literature the asymptotics of the operator $\mathbb{A}(\bxi)F(\bxi)$ for small $\bxi$ was usually
determined in terms of the asymptotics for the principal eigenvalue $\lambda_1(\bxi)$ of $\mathbb{A}(\bxi)$.
Here we apply an alternative approach that relies on integration of the resovent $(\A(\bxi) - \zeta I)^{-1}$
over an appropriate contour in the complex plane.

Since $\| (\A(\bxi) + \eps^\alpha I)^{-1} F(\bxi)^\perp\| \le C$, the best precision that can be obtained when
approximating the resolvent $(\A_{\varepsilon}+I)^{-1}$ by the described above method is
$O(\eps^\alpha)$; see \eqref{e3.2_Intro}. Therefore, for $0< \alpha < 1$ the principal term of the
 expansion provides the optimal precision,  see \eqref{e3.1_Intro}. However, for $1 \le \alpha <2$ it is not the case,
 and the precision can be improved by means of adding  correctors. The authors are planning to address this problem
 in a separate paper.

\subsection{Structure of the paper} The paper consists of Introduction and five sections.
In Section 1 we introduce the operator ${\mathbb A}$, expand this operator into a direct integral over the operator family ${\mathbb A}(\bxi)$ and obtain lower bounds for the quadratic forms of the operators ${\mathbb A}(\bxi)$.
 Section 2 deals with upper bound for the difference between the quadratic forms $a(\bxi)$ and $a(\mathbf{0})$.
The threshold characteristics of the operator family ${\mathbb A}(\bxi)$ near the bottom of the spectrum are studied
in Section 3.
In Section 4 we first approximate the resolvent $( {\mathbb A}(\bxi) + \eps^\alpha I)^{-1}$ for small $\eps$.
Then combining this approximation with the representation of operator $\A$ as a direct integral over the operators
 ${\mathbb A}(\bxi) $, we construct an approximation for the resolvent $( {\mathbb A} + \eps^\alpha I)^{-1}$.
Finally, in Section 5 the main result of this work is derived from the results of Section 4 by means of the scaling
transformation. Namely, we obtain an approximation of the resolvent $({\mathbb A}_\eps + I)^{-1}$ in the operator
norm in $L_2(\R^d)$.

\subsection{Notation}
The norm in a normed space $X$ is denoted by $\|\cdot\|_{X}$, the index is dropped if it does not lead to
misunderstanding.  For normed linear spaces $X$ and $Y$ the standard norm of a linear continuous operator
$T:X\to Y$  is denoted by $\|T\|_{X\to
Y}$ or just $\|T\|$. The notation $\mathcal{L}\{F\}$ is used for the linear span of a collection of vectors  $F\subset X$.

For given complex separable Hilbert spaces  $\H$ and  $\H_*$, and a linear operator   $A: \H \to \H_*$,
the notation $\operatorname{Dom} A$ is used for the domain of $A$, and $\operatorname{Ker} A$ --- for its kernel.
For a self-adjoint operator  $\A$ in a Hilbert space  $\mathfrak{H}$ the symbol $\sigma(\A)$ stands for the
spectrum of $\A$; if ${\tiny{\boldsymbol{\Delta}}}$ is a subset of the real line $\R$, then the spectral projector
of the operator $\A$ that corresponds to the subset ${\tiny{\boldsymbol{\Delta}}}$ is denoted by
$\normalsize{E_{\A}(}$\!${\tiny{\boldsymbol{\Delta}}}$\!$)$.

For a domain $\mathcal O\subset \R^d$ we denote by $L_{p}({\mathcal O})$,
\hbox{$1 \le p \le \infty$}, the standard  $L_p$ spaces. The  inner product
in  $L_{2}({\mathcal O})$ is denoted by  $(\cdot,\cdot)_{L_{2}({\mathcal O})}$ or just
$(\cdot,\cdot)$.
We  use the standard notation $H^s({\mathcal O})$, $s>0$, for the corresponding Sobolev spaces.
Also, in what follows we use the following notation: $i D_j = \partial_j = \partial / \partial x_j$,
$j=1,\dots,d$; $\mathbf{D} = - i \nabla = (D_1,\dots,D_d)^t$;
 $\mathcal{S}(\R^{d})$ stands for the Schwartz class in  $\R^{d}$.
The characteristic function of a set  ${\mathcal O}\subset\R^d$ is denoted  by
$\mathbf{1}_{\mathcal O}$.

\section{L\'evy--type operators with periodic coefficients: \\ expansion into a direct integral and a priori estimates}

\subsection{Operator $\A(\alpha,\mu)$}
We begin by introducing the operator  $\A(\alpha,\mu)$.
Given a function $\mu\in L_{\infty}(\R^d\times \R^d)$ such that
\begin{gather}
\label{e1.1}
0<\mu_{-}\le\mu(\x,\y)\le\mu_{+}< \infty,\ \ \mu(\x,\y)=\mu(\y,\x),\ \ \x, \y\in\R^d;
\\
\label{e1.2}
\mu(\x+\m,\y+\n)=\mu(\x,\y),\ \ \x, \y\in\R^d,\ \ \m,\n\in\Z^d,
\end{gather}
we define in $L_{2}(\R^d)$ the following quadratic form:
\begin{equation}
\label{e1.3}
a(\alpha,\mu) [u,u] := \frac{1}{2} \intop_{\R^d} \intop_{\R^d} d\x\,d\y\, \mu(\x,\y) \frac{|u(\x)-u(\y)|^2}{|\x - \y|^{d+\alpha}},\ \ u\in H^\gamma(\R^d),
\end{equation}
where $0 < \alpha < 2$ and $\gamma := \frac{\alpha}{2}$.
Due to \eqref{e1.1} and \eqref{e1.3} the form $a(\alpha,\mu)$ is non-negative, has a dense domain and
satisfies the estimates
 \begin{equation}
\label{e1.4}
\mu_- a_0(\alpha) [u,u] \le a(\alpha,\mu) [u,u] \le \mu_+ a_0(\alpha) [u,u],\ \ u\in H^\gamma(\R^d);
\end{equation}
here
\begin{equation}
\label{e1.5}
a_0(\alpha) [u,u] := \frac{1}{2} \intop_{\R^d} \intop_{\R^d} d\x\,d\y\, \frac{|u(\x)-u(\y)|^2}{|\x - \y|^{d+\alpha}},\ \ u\in H^\gamma(\R^d).
\end{equation}
The propertiers of the form $a_0(\alpha) $ are well known. A proof of the following statement can be found for instance
in  \cite[\S~6.31]{Sa}. For the reader convenience we provide a sketch of the proof here.
\begin{lemma}
\label{lem1.1}
Assume that  $0< \alpha < 2$. Then the form  $a_0(\alpha)$ admits the following representation:
\begin{equation}
\label{e1.6}
a_0(\alpha) [u,u] = c_0(d,\alpha)  \intop_{\R^d} d\k\, |\k|^\alpha | \wh{u}(\k)|^2,\ \ u\in H^\gamma(\R^d);
\end{equation}
here $\wh{u}(\k)$ is the Fourier image of a function $u(\x)$, and the constant  $c_0(d,\alpha)$
is defined by
\begin{equation}
\label{e1.6a}
 c_0(d,\alpha) =  \intop_{\R^d} \frac{1 - \cos z_1}{ | \z|^{d+\alpha}} \,d\z = \frac{ \pi^{d/2} |\Gamma(-\frac\alpha2)|}{2^\alpha \Gamma((d+\alpha)/2)}.
\end{equation}
\end{lemma}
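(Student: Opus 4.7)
The plan is to work with the Schwartz class $\mathcal S(\R^d)$ first and then extend by density, using the two-sided bound \eqref{e1.4} and the well-known fact that $\mathcal S(\R^d)$ is dense in $H^\gamma(\R^d)$. So fix $u \in \mathcal S(\R^d)$. The starting idea is standard: change variables $\z = \x - \y$ in \eqref{e1.5} to separate the singular kernel from the function, obtaining
\begin{equation*}
a_0(\alpha)[u,u] = \frac{1}{2} \intop_{\R^d} \frac{d\z}{|\z|^{d+\alpha}} \intop_{\R^d} |u(\y+\z) - u(\y)|^2\, d\y.
\end{equation*}
By Plancherel, the inner integral equals $\int_{\R^d} |e^{i \k\cdot\z} - 1|^2 |\wh u(\k)|^2\, d\k = 2 \int_{\R^d} (1 - \cos(\k\cdot\z)) |\wh u(\k)|^2\, d\k$.

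Next I would use Fubini to interchange the $\k$- and $\z$-integrals. To justify this, I would note that for $u \in \mathcal S(\R^d)$ we have $|\wh u(\k)|^2 \le C(1+|\k|)^{-N}$ for any $N$, while the $\z$-integrand $(1-\cos(\k\cdot\z))/|\z|^{d+\alpha}$ is non-negative and its $\z$-integral equals $c_0(d,\alpha)|\k|^\alpha$ (see below), which grows only polynomially; so the integral is absolutely convergent and Fubini applies. This gives
\begin{equation*}
a_0(\alpha)[u,u] = \intop_{\R^d} |\wh u(\k)|^2\,  \left(\intop_{\R^d} \frac{1 - \cos(\k\cdot\z)}{|\z|^{d+\alpha}}\, d\z\right) d\k.
\end{equation*}

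The key computation is that the inner $\z$-integral equals $c_0(d,\alpha)|\k|^\alpha$. For $\k\neq 0$, choose an orthogonal matrix $R$ mapping $\e_1$ to $\k/|\k|$; substituting $\z = |\k|^{-1} R\w$ gives $\k\cdot\z = w_1$, $|\z|^{d+\alpha} = |\k|^{-(d+\alpha)}|\w|^{d+\alpha}$ and $d\z = |\k|^{-d}\, d\w$, so the Jacobian factors produce exactly $|\k|^\alpha$ times the constant displayed in \eqref{e1.6a}. I would also record the convergence of $c_0(d,\alpha)$: near the origin $1-\cos w_1 \sim w_1^2/2$ gives integrability iff $\alpha < 2$, and at infinity integrability holds iff $\alpha > 0$.

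The main obstacle is the closed-form evaluation of $c_0(d,\alpha)$ in \eqref{e1.6a}. I would reduce it by integrating out the $(d-1)$ transverse variables in polar coordinates: writing $\z = (z_1, \z')$ and using the Beta-function identity $\int_0^\infty r^{d-2}(r^2 + z_1^2)^{-(d+\alpha)/2}\, dr = \tfrac12 |z_1|^{-1-\alpha}\, B\!\left(\tfrac{d-1}{2}, \tfrac{1+\alpha}{2}\right)$ together with the area of $S^{d-2}$, reduces the integral to the one-dimensional
\begin{equation*}
\intop_{-\infty}^\infty \frac{1 - \cos z_1}{|z_1|^{1+\alpha}}\, dz_1 = 2\intop_0^\infty \frac{1 - \cos t}{t^{1+\alpha}}\, dt,
\end{equation*}
which is a classical Mellin-type integral equal to $-2\Gamma(-\alpha)\cos(\pi\alpha/2)$ for $0<\alpha<2$. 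Simplifying with the reflection formula $\Gamma(-\alpha)\Gamma(\alpha) = -\pi/(\alpha \sin(\pi\alpha))$ and the duplication formula, and combining with the Beta-function factor, yields the stated value $\pi^{d/2}|\Gamma(-\alpha/2)|/(2^\alpha \Gamma((d+\alpha)/2))$. Finally, extending from $\mathcal S(\R^d)$ to $H^\gamma(\R^d)$ by density, using the continuity of both sides in the $H^\gamma$-norm (from \eqref{e1.4} on the left and from the explicit Fourier representation on the right), finishes the proof.
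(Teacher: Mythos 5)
Your proof is correct and follows essentially the same route as the paper: change of variables to $\z=\x-\y$, Plancherel/unitarity of the Fourier transform to pull out $|\wh u(\k)|^2$, and the rotation-plus-scaling argument to identify the symbol as $c_0(d,\alpha)|\k|^\alpha$. The only difference is that you carry out the Beta/Mellin evaluation of $c_0(d,\alpha)$ explicitly (which does check out and, for $d=1$, should be read as skipping the transverse reduction), while the paper simply cites \cite{Kw} for that closed form; and you make the Schwartz-then-density and Fubini steps explicit, which the paper leaves implicit in a proof it labels a sketch.
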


\begin{proof}
Making the change of variables $\y \mapsto \z = \y-\x$ in the integral on the right-hand side of \eqref{e1.5}
and taking into account the unitarity of Fourier transform we obtain
\begin{equation*}
\begin{aligned}
a_0(\alpha) [u,u] &= \frac{1}{2} \intop_{\R^d} \intop_{\R^d} d\x\,d\z\, \frac{|u(\x)-u(\x+\z)|^2}{|\z|^{d+\alpha}}
\\
&= \frac{1}{2} \intop_{\R^d} \intop_{\R^d} d\k\,d\z\, \frac{| \wh{u}(\k)|^2 |1 -e^{i \langle \k,\z\rangle}|^2}{|\z|^{d+\alpha}}
=  \intop_{\R^d} d\k\,  V_\alpha(\k) | \wh{u}(\k)|^2,
\end{aligned}
\end{equation*}
where
\begin{equation}
\label{e1.6b}
V_\alpha(\k) := \frac{1}{2} \intop_{\R^d} \,d\z\, \frac{|1 -e^{i \langle \k,\z\rangle}|^2}{|\z|^{d+\alpha}} =
 \intop_{\R^d} \,d\z\, \frac{1 - \cos( \langle \k,\z\rangle)}{|\z|^{d+\alpha}}.
\end{equation}
It is easy to check that
\begin{equation}
\label{e1.6c}
V_\alpha(\k) = c_0(d,\alpha) |\k|^\alpha, \quad \k \in \R^d,
\end{equation}
where the constant $c_0(d,\alpha)$ is defined by the first relation in \eqref{e1.6a}. The second
relation is justified, for example, in \cite{Kw}.
Notice that   $c_0(d,\alpha)$ is of order  $(2 - \alpha)^{-1}$ as $\alpha \to 2$.
\end{proof}

From  Lemma  \ref{lem1.1} we deduce that the form  $a_0(\alpha)$ is closed. Then, in view of estimates
\eqref{e1.4}, the form $a(\alpha,\mu)$ is also closed.
The operator ${\mathbb A} = {\mathbb A}(\alpha,\mu)$ is defined as \emph{a non-negative self-adjoint
operator in $L_2(\R^d)$ that corresponds to the closed form }   \eqref{e1.3}.  Formally, one can write, see \cite{KaPiaZhi19},
\begin{equation*}
({\mathbb A} u) (\x) =  \intop_{\R^d} \mu(\x, \y) \frac{\left( u(\x) - u(\y) \right)}{|\x - \y|^{d+\alpha}}\,d\y.
\end{equation*}
Similarly, the self-adjoint operator  in $L_2(\R^d)$ that corresponds to the closed form  \eqref{e1.5} is denoted by  ${\mathbb A}_0 = {\mathbb A}_0(\alpha)$.
Due to representation \eqref{e1.6} the operator ${\mathbb A}_0(\alpha)$ coincides with the fractional Laplacian $( - \Delta)^{\gamma}$
up to a multiplicative constant:
\begin{equation*}
{\mathbb A}_0(\alpha)  = c_0(d,\alpha) ( - \Delta)^{\gamma},  \quad \Dom{\mathbb A}_0(\alpha) = H^{\alpha}(\R^d),\quad
\gamma=\frac\alpha2.
\end{equation*}
It follows from representation \eqref{e1.6} that $\lambda_{0}=0$ is the edge point of the spectrum of operator
${\mathbb A}_0(\alpha)$.
By \eqref{e1.4} the forms $a(\alpha,\mu)$ and $a_0(\alpha)$ are comparable. Therefore, $\lambda_{0}=0$
is also the lower edge of the spectrum of operator  $\A(\alpha,\mu)$.

\subsection{Operator family $\A(\bxi;\alpha,\mu)$}\label{sec1.2}
Denote by $\Omega:=[0,1)^{d}$ the periodicity cell of the lattice $\Z^d$, then  $\widetilde\Omega:=[-\pi,\pi)^{d}$ is the periodicity cell of the dual lattice $(2\pi\mathbb{Z})^{d}$.
For $s>0$ the space $\wt{H}^s(\Omega)$ is defined as a space of functions from $H^s(\Omega)$  whose
$\Z^d$-periodic extension belongs to $H^s_{\operatorname{loc}}(\R^d)$.
The standard unitary discrete Fourier transform ${\mathcal F}: L_{2}(\Omega)\to \ell_{2}(\Z^d)$ is defined by
\begin{gather*}
{\mathcal F} u(\n)= \wh{u}_\n = \intop_{\Omega}u(\x)e^{-2\pi i \langle \n, \x \rangle}d\x,\ \ \n\in\Z^d,\ \ u\in L_{2}(\Omega);
\\
u(\x) = \sum_{\n\in\Z^d} \wh{u}_{\n}e^{2\pi i \langle \n, \x\rangle},\ \ \x\in\Omega.
\end{gather*}
Then the relation $u \in \wt{H}^s(\Omega)$ is equivalent to the convergence of the series
$$
\sum_{\n \in \Z^d} (1 + |2\pi \n|^2)^s | \wh{u}_\n|^2,
$$
moreover, the latter expression admits two-sided estimates through $\| {u} \|^2_{{H}^s(\Omega)}$.

In the space $L_2(\Omega)$ we introduce a family of quadratic forms $a(\bxi)= a(\bxi;\alpha,\mu)$,  $\bxi \in \widetilde{\Omega}$, that read
 \begin{equation}
\label{e1.9}
a(\bxi;\alpha,\mu) [u,u] := \frac{1}{2} \intop_{\R^d} d\y \intop_{\Omega} d\x\, \mu(\x,\y)
\frac{| e^{i \langle \bxi,\x \rangle}u(\x)- e^{i \langle \bxi,\y \rangle} u(\y)|^2}{|\x - \y|^{d+\alpha}},\ \
u\in \wt{H}^\gamma(\Omega);
\end{equation}
here and in what follows we identify the functions $u\in \wt{H}^\gamma(\Omega)$ with their $\Z^d$-periodic extensions.
Due to \eqref{e1.1} the forms  $a(\bxi;\alpha,\mu)$ are densely defined and  non-negative,  moreover, they satisfy the following estimates:
 \begin{equation}
\label{e1.10}
\mu_- a_0(\bxi;\alpha) [u,u] \le a(\bxi;\alpha,\mu) [u,u] \le \mu_+ a_0(\bxi;\alpha) [u,u],\ \ u\in \wt{H}^\gamma(\Omega);
\end{equation}
here
\begin{equation}
\label{e1.11}
a_0(\bxi;\alpha) [u,u] := \frac{1}{2} \intop_{\R^d} d\y \intop_{\Omega} d\x\,
\frac{| e^{i \langle \bxi,\x \rangle}u(\x)- e^{i \langle \bxi,\y \rangle} u(\y)|^2}{|\x - \y|^{d+\alpha}},\ \
u\in \wt{H}^\gamma(\Omega).
\end{equation}

\begin{lemma}
\label{lem1.2}
Assume that $0< \alpha < 2$. Then the form  \eqref{e1.11} admits the representation
\begin{equation}
\label{e1.12}
a_0(\bxi;\alpha) [u,u] =  c_0(d,\alpha) \sum_{\n \in \Z^d} | 2\pi \n + \bxi|^{\alpha} | \wh{u}_\n|^2,\ \
u\in \wt{H}^\gamma(\Omega),
\end{equation}
where $\wh{u}_\n$, $\n \in \Z^d$, are the Fourier coefficients of function $u$, and  $c_0(d,\alpha)$
is the constant introduced in \eqref{e1.6a}.
\end{lemma}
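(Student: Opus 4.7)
The plan is to mimic the proof of Lemma \ref{lem1.1}, taking advantage of the fact that both the singular kernel $|\x-\y|^{-d-\alpha}$ and the phase factor $e^{i\langle\bxi,\cdot\rangle}$ are translation-covariant, so that periodicity reduces the computation to a single Fourier--Parseval identity on the torus.

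First I would perform the change of variables $\z=\y-\x$ in the integral \eqref{e1.11}, using $\Z^d$-periodicity of $u$ to rewrite
\begin{equation*}
e^{i\langle \bxi,\x\rangle}u(\x)-e^{i\langle \bxi,\y\rangle}u(\y)
= e^{i\langle \bxi,\x\rangle}\bigl(u(\x)-e^{i\langle \bxi,\z\rangle}u(\x+\z)\bigr),
\end{equation*}
so that the phase of modulus one factors out and
\begin{equation*}
a_0(\bxi;\alpha)[u,u]=\frac12\intop_{\R^d}d\z\intop_\Omega d\x\,\frac{|u(\x)-e^{i\langle\bxi,\z\rangle}u(\x+\z)|^2}{|\z|^{d+\alpha}}.
\end{equation*}
Note that the $\x$-domain remains $\Omega$ because both $u(\x)$ and $u(\x+\z)$ are $\Z^d$-periodic in $\x$.

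Next I would insert the Fourier expansion $u(\x)=\sum_{\n\in\Z^d}\wh{u}_\n e^{2\pi i\langle\n,\x\rangle}$, which gives
\begin{equation*}
u(\x)-e^{i\langle\bxi,\z\rangle}u(\x+\z)=\sum_{\n\in\Z^d}\wh{u}_\n\bigl(1-e^{i\langle 2\pi\n+\bxi,\z\rangle}\bigr)e^{2\pi i\langle\n,\x\rangle}.
\end{equation*}
Parseval's identity on $\Omega$ then yields
\begin{equation*}
\intop_\Omega|u(\x)-e^{i\langle\bxi,\z\rangle}u(\x+\z)|^2\,d\x=\sum_{\n\in\Z^d}|\wh{u}_\n|^2\,\bigl|1-e^{i\langle 2\pi\n+\bxi,\z\rangle}\bigr|^2.
\end{equation*}
Since the integrand is non-negative, Tonelli's theorem permits swapping summation and the $\z$-integration, producing
\begin{equation*}
a_0(\bxi;\alpha)[u,u]=\sum_{\n\in\Z^d}|\wh{u}_\n|^2\,V_\alpha(2\pi\n+\bxi),
\end{equation*}
with $V_\alpha$ as in \eqref{e1.6b}. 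Invoking \eqref{e1.6c} to substitute $V_\alpha(\k)=c_0(d,\alpha)|\k|^\alpha$ concludes the argument.

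The main obstacle is of a bookkeeping nature rather than a genuinely difficult estimate: one has to justify the change of variables carefully for $u\in\wt H^\gamma(\Omega)$ (interpreting $u$ as its periodic extension), verify that the domain $(\z,\x)\in\R^d\times\Omega$ is correctly obtained from $(\x,\y)\in\Omega\times\R^d$ via periodicity, and confirm finiteness of the right-hand side. The latter follows because $u\in\wt H^\gamma(\Omega)$ is equivalent to $\sum_\n(1+|2\pi\n|^2)^\gamma|\wh u_\n|^2<\infty$, which together with $|2\pi\n+\bxi|^\alpha\le C(1+|\n|^2)^\gamma$ uniformly in $\bxi\in\wt\Omega$ ensures that both sides are finite and the Tonelli exchange is legitimate.
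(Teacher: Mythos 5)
Your proof follows exactly the paper's argument: change of variables $\z=\y-\x$, factor out the unimodular phase $e^{i\langle\bxi,\x\rangle}$, apply Parseval on $\Omega$, identify $V_\alpha(2\pi\n+\bxi)$ via \eqref{e1.6b}, and finish with \eqref{e1.6c}. The extra remarks on Tonelli and finiteness are sound and merely make explicit what the paper leaves implicit.
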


\begin{proof}
Making the change of variables $\y \mapsto \z = \y-\x$ and using the unitarity of the discrete Fourier transform in $L_2(\Omega)$, we arrive at the relation
\begin{equation*}
\begin{aligned}
a_0(\bxi;\alpha) [u,u] &= \frac{1}{2} \intop_{\R^d} d\z \intop_{\Omega} d\x\, \frac{|u(\x)- e^{i \langle \bxi,\z\rangle} u(\x+\z)|^2}{|\z|^{d+\alpha}}
\\
&= \sum_{\n \in \Z^d}  | \wh{u}_\n|^2 \frac{1}{2} \intop_{\R^d} d\z\, \frac{ |1 -e^{i \langle 2\pi \n + \bxi,\z\rangle}|^2}{|\z|^{d+\alpha}}
=  \sum_{\n \in \Z^d}  V_\alpha(2\pi \n + \bxi) | \wh{u}_\n|^2
\end{aligned}
\end{equation*}
with  $V_\alpha(2\pi \n + \bxi)$ defined in  \eqref{e1.6b}. It remains to combine the latter relation with  \eqref{e1.6c}.
\end{proof}

 The last statement implies the closedness of the form $a_0(\bxi;\alpha)$ and, in view of estimates \eqref{e1.10},
 also the closedness of $a(\bxi;\alpha,\mu)$.
 
By definition ${\mathbb A}(\bxi) = {\mathbb A}(\bxi;\alpha,\mu)$ \emph{is the self-adjoint operator in $L_2(\R^d)$ that corresponds to the closed form}   \eqref{e1.9}. Formally, it takes the form
\begin{equation*}
({\mathbb A}(\bxi) u) (\x) =  \intop_{\R^d} \mu(\x, \y) \frac{\left( u(\x) - e^{- i \langle \bxi, \x-\y\rangle}u(\y) \right)}{|\x - \y|^{d+\alpha}}\,d\y.
\end{equation*}
Let  ${\mathbb A}_0(\bxi) = {\mathbb A}_0(\bxi;\alpha)$ be the self-adjoint operator generated by the closed form in
\eqref{e1.11}.  Thanks to \eqref{e1.12} this operator coincides with the fractional power of the operator $|\D +\bxi|$
up to a multiplicative constant:
\begin{equation}
\label{e1.14}
{\mathbb A}_0(\bxi;\alpha)  = c_0(d,\alpha) | \D + \bxi|^{\alpha},  \quad \Dom{\mathbb A}_0(\bxi;\alpha) =
\wt{H}^{\alpha}(\Omega).
\end{equation}
Since the domain of the form $a(\bxi;\alpha,\mu)$ is $\wt{H}^{\gamma}(\Omega)$, then, due to the compactness
of embedding of $\wt{H}^{\gamma}(\Omega)$ in $L_2(\Omega)$, the spectrum of operator $\A(\bxi;\alpha,\mu)$,
as well as that of operator $\A_0(\bxi;\alpha)$, is discrete for any $\bxi \in \wt{\Omega}$.

\subsection{Representation of the operator  $\A(\alpha,\mu)$ as a direct integral}
For $\n\in\Z^d$ denote  by  $S_{\n}$ the unitary shift operator in  $L_2(\R^d)$
defined by
\begin{equation*}
S_{\n}u(\x) =u(\x+\n),\ \ \x\in\R^d,\ \ u \in L_2(\R^d).
\end{equation*}
It is clear that under conditions  (\ref{e1.1}), (\ref{e1.2}) we have
$$
a(\alpha,\mu) [S_{\n} u, S_{\n}u] = a(\alpha,\mu) [ u, u],\quad u \in H^\gamma(\R^d), \quad \n \in \Z^d.
$$
This relation implies that the operator $\A(\alpha,\mu)$ commutes with the operators $S_{\n}$ for all $\n \in \Z^d$,
that is $\A(\alpha,\mu)$ is a  \emph{periodic operator}.

The Gelfand transform $\mathcal{G}$  (see  \cite{Sk} or \cite[Ch.~2]{BSu1}) is first defined for the Schwartz class functions as follows:
\begin{equation*}
\mathcal{G}u(\bxi,\x) = \wt{u}(\bxi,\x):=(2\pi)^{-d/2}\sum_{\n\in\Z^d} u(\x+\n) e^{-i \langle \bxi, \x+\n\rangle},\
\ \bxi\in\widetilde\Omega,\ \ \x\in\Omega,\ \ u\in {\mathcal S}(\R^d).
\end{equation*}
This transform extends by continuity to a unitary operator
 $$
 \mathcal{G}: L_{2}(\R^d) \to \intop_{\widetilde\Omega}\oplus L_{2}(\Omega)\, d\bxi=L_{2}(\widetilde\Omega\times\Omega).
 $$
Recall that the Gelfand transform maps the Sobolev space $H^s(\R^d)$,  $s>0$, onto
direct integral of the spaces $\wt{H}^s(\Omega)$:
 $$
 \mathcal{G}: H^s(\R^d) \to \intop_{\widetilde\Omega}\oplus \wt{H}^s(\Omega)\, d\bxi
 =L_{2}(\widetilde\Omega; \wt{H}^s(\Omega)).
 $$

Like any periodic operator,  $\A(\alpha,\mu)$  can be expanded into a direct integral by means of the Gelfand
transform.  %, i.e. $\A(\alpha,\mu)$ admits partial diagonalization.
This is the subject of the following statement.
\begin{lemma}
\label{lem1.3}
Let conditions  \eqref{e1.1} and \eqref{e1.2} be fulfilled, and assume that $0< \alpha <2$.
Then $u \in H^\gamma(\R^d)$ if and only if $\mathcal{G} u = \wt{u} \in L_2(\wt{\Omega}; \wt{H}^\gamma(\Omega))$,
where ${\mathcal G} : L_2(\R^d) \to L_2(\wt{\Omega} \times \Omega)$
is the unitary Gelfand transform.  Moreover,
\begin{equation}
\label{e1.14aa}
 a[u,u] = \intop_{\wt{\Omega}} a(\bxi) [ \wt{u}(\bxi,\cdot), \wt{u}(\bxi,\cdot)] \, d\bxi,
 \quad u \in H^\gamma(\R^d),
 \end{equation}
where  the form $a = a(\alpha,\mu)$ is defined in \eqref{e1.3} and the family of forms $a(\bxi) = a(\bxi;\alpha,\mu)$,
$\bxi \in \wt{\Omega}$, is given by \eqref{e1.9}.
\end{lemma}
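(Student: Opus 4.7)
The plan is to establish \eqref{e1.14aa} first for Schwartz test functions $u\in\mathcal S(\R^d)$, where every sum and integral encountered converges absolutely, and then propagate both the identity and the domain characterisation to $H^\gamma(\R^d)$ by approximation. The equivalence $\mathcal G H^\gamma(\R^d)=L_2(\wt\Omega;\wt H^\gamma(\Omega))$ is to come out as a byproduct of \eqref{e1.14aa} applied in the special case $\mu\equiv 1$, once Lemmas \ref{lem1.1} and \ref{lem1.2} are invoked.

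The derivation of the identity proceeds in three moves. First, I would decompose the outer integration as $\x=\x_0+\n$ with $\x_0\in\Omega$, $\n\in\Z^d$, translate $\y\mapsto\y+\n$, and use the periodicity \eqref{e1.2} of $\mu$ to recast the form as
\begin{equation*}
a[u,u]=\frac{1}{2}\intop_\Omega d\x_0\intop_{\R^d}d\y\,\frac{\mu(\x_0,\y)}{|\x_0-\y|^{d+\alpha}}\sum_{\n\in\Z^d}|u(\x_0+\n)-u(\y+\n)|^2.
\end{equation*}
Second, the inner $\n$-sum is evaluated by Plancherel for the discrete Fourier transform on $\Z^d$ applied to the sequences $\n\mapsto u(\z+\n)$; the key identity is
\begin{equation*}
(2\pi)^{-d/2}\sum_{\n\in\Z^d}u(\z+\n)e^{-i\langle\bxi,\n\rangle}=e^{i\langle\bxi,\z\rangle}\wt u(\bxi,\z),\qquad \z\in\R^d,
\end{equation*}
valid for every $\z$ if $\wt u(\bxi,\cdot)$ is taken in its $\Z^d$-periodic extension (a direct verification reconciles this with the Bloch quasi-periodicity of $e^{i\langle\bxi,\cdot\rangle}\wt u(\bxi,\cdot)$). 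Third, Fubini moves the $\bxi$-integration to the outside and comparison with \eqref{e1.9} shows that the resulting integrand is precisely $a(\bxi)[\wt u(\bxi,\cdot),\wt u(\bxi,\cdot)]$, delivering \eqref{e1.14aa} for Schwartz $u$.

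For the characterisation of $\mathcal G H^\gamma(\R^d)$, I would specialise the identity just derived to $\mu\equiv 1$: Lemmas \ref{lem1.1} and \ref{lem1.2} then identify the two sides with $c_0(d,\alpha)\||\D|^\gamma u\|_{L_2(\R^d)}^2$ and $c_0(d,\alpha)\int_{\wt\Omega}\||\D+\bxi|^\gamma\wt u(\bxi,\cdot)\|_{L_2(\Omega)}^2\,d\bxi$ respectively, and combined with the $L_2$-Plancherel property of $\mathcal G$ this yields the norm equivalence $\|u\|_{H^\gamma(\R^d)}^2\asymp\int_{\wt\Omega}\|\wt u(\bxi,\cdot)\|_{\wt H^\gamma(\Omega)}^2\,d\bxi$, which is exactly the claimed characterisation. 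The two-sided form bounds \eqref{e1.4} and \eqref{e1.10}, together with the density of $\mathcal S(\R^d)$ in $H^\gamma(\R^d)$, then allow passage to the limit in \eqref{e1.14aa} and extend it to the full Sobolev space.

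The main obstacle I anticipate is not conceptual but a matter of bookkeeping: justifying rigorously the interchange of the $\n$-summation with the $(\x_0,\y)$-integration in the presence of the singular kernel $|\x_0-\y|^{-d-\alpha}$. I would treat this by splitting the $\y$-range into $|\x_0-\y|\le 1$ and $|\x_0-\y|>1$. On the near-diagonal piece, the Schwartz decay together with the mean value theorem gives $\sum_\n|u(\x_0+\n)-u(\y+\n)|^2\lesssim|\x_0-\y|^2$ uniformly in $\x_0\in\Omega$, which combined with $2-\alpha>0$ makes the kernel locally integrable; on the far piece the crude bound $\sum_\n|u(\x_0+\n)-u(\y+\n)|^2\le 2\sum_\n(|u(\x_0+\n)|^2+|u(\y+\n)|^2)$ is uniformly bounded in both variables and the kernel is integrable at infinity. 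These pointwise estimates legitimise both the Fubini step and the exchange of Plancherel with the $(\x_0,\y)$-integration in the Schwartz setting.
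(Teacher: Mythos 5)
Your proposal is correct and the core argument---decomposing $\x$ over the cells $\Omega+\n$, translating $\y\mapsto\y+\n$, using the periodicity of $\mu$, and then applying Parseval/Plancherel for the Fourier series $\n\mapsto u(\z+\n)$ dual to $\bxi$---is precisely the paper's proof. You go somewhat further than the paper in two respects: you justify the interchange of $\n$-summation, $(\x_0,\y)$-integration, and Plancherel by first restricting to Schwartz functions with explicit near- and far-diagonal estimates, whereas the paper performs the computation directly for $u\in H^\gamma(\R^d)$ without comment; and you actually derive the claimed Gelfand-transform image characterisation $\mathcal{G}H^\gamma(\R^d)=L_2(\wt\Omega;\wt H^\gamma(\Omega))$ from the $\mu\equiv 1$ case of \eqref{e1.14aa} together with Lemmas \ref{lem1.1}--\ref{lem1.2}, while the paper simply cites this as a known property of $\mathcal{G}$ in \S\ref{sec1.2} and does not re-prove it inside Lemma \ref{lem1.3}.
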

\begin{proof}
For $u \in H^\gamma(\R^d)$ we rewrite the integral with respect to the variable $\x$ in \eqref{e1.3}
as the sum of integrals over the cells $\Omega + \n$, $\n \in \Z^d$. Then, making the change of
variables $\y \mapsto \y+\n$  in these integrals and considering the periodicity of  function  $\mu$
we conclude that
\begin{equation}
\label{e1.14a}
a[u,u] = \sum_{\n \in \Z^d} \frac{1}{2} \intop_{\R^d} d\y \intop_{\Omega} d\x\, \mu(\x,\y) \frac{|u(\x+\n)-u(\y+\n)|^2}{|\x - \y|^{d+\alpha}}.
\end{equation}
Applying the inverse Gelfand transform to the function  $\wt{u}(\bxi,\x)$ yields
$$
\begin{aligned}
u(\x + \n)  = (2\pi)^{-d/2} \intop_{\wt{\Omega}} e^{i\langle \bxi, \x + \n\rangle} \wt{u}(\bxi,\x) \, d\bxi.
%\\
 %u(\y + \n) = (2\pi)^{-d/2} \intop_{\wt{\Omega}} e^{i\langle \bxi, \y + \n\rangle} \wt{u}(\bxi,\y) \, d\bxi.
\end{aligned}
$$
Therefore,
$$
u(\x + \n) - u(\y + \n) = (2\pi)^{-d/2} \intop_{\wt{\Omega}} e^{i\langle \bxi, \n\rangle} \left( e^{i\langle \bxi, \x\rangle} \wt{u}(\bxi,\x)
- e^{i\langle \bxi, \y\rangle} \wt{u}(\bxi,\y) \right) \, d\bxi,\quad \n \in \Z^d.
$$
This implies that for each fixed $\x$ and $\y$ the sequence $u(\x + \n) - u(\y + \n)$, $\n \in \Z^d$,
consists of the Fourier coefficients of $(2\pi \Z)^d$-periodic function $e^{i\langle \bxi, \x\rangle} \wt{u}(\bxi,\x)
- e^{i\langle \bxi, \y\rangle} \wt{u}(\bxi,\y)$. By the Parsevale identity we have
$$
\sum_{\n \in \Z^d} |u(\x+\n)-u(\y+\n)|^2 = \intop_{\wt{\Omega}} \left| e^{i\langle \bxi, \x\rangle} \wt{u}(\bxi,\x)
- e^{i\langle \bxi, \y\rangle} \wt{u}(\bxi,\y)\right|^2 \,d\bxi.
$$
Combining this identity with  \eqref{e1.14a} we arrive at \eqref{e1.14aa}.
\end{proof}

Since the operators $\A$ and  $\A(\bxi)$ are defined in terms of the quadratic forms introduced in \eqref{e1.3}
and  \eqref{e1.9},  it follows from Lemma \ref{lem1.3} that
\begin{equation}
\label{e1.15}
\A(\alpha,\mu) = {\mathcal G}^* \Bigl( \int_{\widetilde\Omega} \oplus  \A(\bxi;\alpha,\mu) \,d\bxi\Bigr) {\mathcal G}. \end{equation}

\subsection{Estimates for the quadratic form  of the operator $\A(\bxi;\alpha,\mu)$}\label{sec1.4}

According to Lemma \ref{lem1.2} the operators $\A_{0}(\bxi; \alpha)$, $\bxi\in\widetilde\Omega$, are diagonalized by
the unitary discrete Fourier transform ${\mathcal F}$:
\begin{equation}
\label{e1.16}
\A_{0}(\bxi; \alpha)= c_0(d,\alpha) {\mathcal F}^{*} \bigl[   |2\pi \n+\bxi|^\alpha \bigr] {\mathcal F},\ \ \bxi\in\wt{\Omega};
\end{equation}
here $[  |2\pi \n+\bxi|^\alpha ]$  stands for the operator of multiplication by the function
$ |2\pi \n+\bxi|^\alpha$, $\n \in \Z^d$, that acts in the space $\ell_2(\Z^d)$.

Letting $\bxi=0$ in \eqref{e1.16} one can easily deduce that
$\operatorname{Ker}\A_{0}(\mathbf{0};\alpha)=\mathcal{L}\{\mathbf{1}_{\Omega}\}$. In view of \eqref{e1.10}
this yields $\operatorname{Ker}\A(\mathbf{0}; \alpha,\mu)=\mathcal{L}\{\mathbf{1}_{\Omega}\}$,
and we arrive at the following statement:
\begin{lemma}
\label{lem1.4}
Let conditions \eqref{e1.1} and \eqref{e1.2} be fulfilled, and assume that  $0< \alpha <2$.
Then $\lambda_{0}=0$ is a simple eigenvalue of the operator $\A(\mathbf{0};\alpha,\mu)$,
and $\operatorname{Ker}\A(\mathbf{0};\alpha,\mu)=\mathcal{L}\{\mathbf{1}_{\Omega}\}$.
\end{lemma}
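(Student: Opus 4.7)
The plan is to read off the kernel of the ``free'' operator $\A_0(\mathbf{0};\alpha)$ from its explicit Fourier diagonalization \eqref{e1.16}, and then transport the conclusion to $\A(\mathbf{0};\alpha,\mu)$ via the two-sided form bound \eqref{e1.10}. The argument is essentially the one sketched in the paragraph preceding the lemma.

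First, I would set $\bxi=\mathbf{0}$ in \eqref{e1.16}. The operator $\A_0(\mathbf{0};\alpha)$ is then unitarily equivalent to multiplication by $c_0(d,\alpha)|2\pi\n|^\alpha$ on $\ell_2(\Z^d)$. Since $c_0(d,\alpha)>0$ by \eqref{e1.6a} and $|2\pi\n|^\alpha$ vanishes exactly at $\n=\mathbf{0}$, with the next value equal to $(2\pi)^\alpha$, the zero eigenspace is one-dimensional, spanned by the Fourier mode $\delta_{\n,\mathbf{0}}$; pulling back by ${\mathcal F}^*$ identifies this eigenspace with $\mathcal{L}\{\mathbf{1}_\Omega\}$. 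Moreover $0$ is separated from the rest of the spectrum of $\A_0(\mathbf{0};\alpha)$ by the gap $c_0(d,\alpha)(2\pi)^\alpha$.

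Next, I would invoke the standard fact that for a non-negative self-adjoint operator generated by a closed non-negative form one has $\Ker=\{u\in\operatorname{Dom} a : a[u,u]=0\}$. Combined with the estimate \eqref{e1.10} and the strict positivity $\mu_->0$, this gives the chain of equivalences
\begin{equation*}
u\in\Ker\A(\mathbf{0};\alpha,\mu) \;\Longleftrightarrow\; a(\mathbf{0};\alpha,\mu)[u,u]=0 \;\Longleftrightarrow\; a_0(\mathbf{0};\alpha)[u,u]=0 \;\Longleftrightarrow\; u\in\Ker\A_0(\mathbf{0};\alpha),
\end{equation*}
so that $\Ker\A(\mathbf{0};\alpha,\mu)=\mathcal{L}\{\mathbf{1}_\Omega\}$. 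To see that $\mathbf{1}_\Omega$ genuinely lies in the domain and is an eigenvector rather than an element outside the form domain, I would note $\mathbf{1}_\Omega\in\wt{H}^\gamma(\Omega)$ and observe that substituting $u\equiv 1$ into \eqref{e1.9} at $\bxi=\mathbf{0}$ gives $a(\mathbf{0};\alpha,\mu)[\mathbf{1}_\Omega,\mathbf{1}_\Omega]=0$ directly. Simplicity of $\lambda_0=0$ then follows from the one-dimensionality of the kernel.

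I do not anticipate any real obstacle: the only point that requires a sentence of justification is the passage from a vanishing quadratic form to a vanishing action of the operator, which is the standard form-domain characterization of the kernel of a non-negative self-adjoint operator.
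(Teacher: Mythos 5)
Your argument is correct and follows essentially the same route as the paper's: the paper derives the lemma in the paragraph preceding its statement by setting $\bxi=\mathbf{0}$ in \eqref{e1.16} to identify $\Ker\A_0(\mathbf{0};\alpha)=\mathcal{L}\{\mathbf{1}_\Omega\}$ and then invoking the two-sided form bound \eqref{e1.10} to transfer this to $\A(\mathbf{0};\alpha,\mu)$. Your additional remark on the form-domain characterization of the kernel of a non-negative self-adjoint operator is a correct and welcome bit of scaffolding that the paper leaves implicit.
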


It is straightforward to show that
\begin{align}
\label{1.18a}
& |2\pi \n+\bxi|^\alpha \ge |\bxi|^\alpha, \quad \bxi\in\widetilde\Omega,\ \ \n\in\Z^d,
\\
\label{1.18b}
& |2\pi \n+\bxi|^\alpha \ge \pi^\alpha, \quad \bxi\in\widetilde\Omega,\ \ \n\in\Z^d \setminus{\mathbf{0}},
\\
\label{1.18c}
& \min_{\n\in\Z^d \setminus{\mathbf{0}}} |2\pi \n|^\alpha = (2\pi)^\alpha.
\end{align}
Combining \eqref{1.18a}, \eqref{1.18b} and the statement of Lemma \ref{lem1.2} we obtain the following estimates for the  quadratic form $a_0(\bxi;\alpha)$:
\begin{align}
\label{e1.17}
 a_0(\bxi;\alpha)[u,u]& \ge c_0(d,\alpha) |\bxi|^{\alpha} \| u \|_{L_2(\Omega)}^2,\ \ u \in \wt{H}^\gamma(\Omega), \quad \bxi\in\widetilde\Omega,
 \\
 \label{e1.18}
 a_0(\bxi;\alpha)[u,u] &\ge c_0(d,\alpha) \pi^{\alpha} \| u \|_{L_2(\Omega)}^2,\ \ u \in \wt{H}^\gamma(\Omega), \ \ \intop_\Omega u(\x)\,d\x =0, \quad \bxi\in\widetilde\Omega.
 \end{align}
As a consequence of \eqref{e1.10}, \eqref{e1.17} and  \eqref{e1.18} one has
\begin{proposition}
\label{prop1.5}
Let conditions \eqref{e1.1} and \eqref{e1.2} be fulfilled, and assume that $0< \alpha <2$.
Then the form in \eqref{e1.9} satisfies the estimates
\begin{align*}
 a(\bxi;\alpha,\mu)[u,u] &\ge \mu_- c_0(d,\alpha) |\bxi|^{\alpha} \| u \|_{L_2(\Omega)}^2,\ \ u \in \wt{H}^\gamma(\Omega), \quad \bxi\in\widetilde\Omega,
 \\
 a(\bxi;\alpha,\mu)[u,u] &\ge \mu_- c_0(d,\alpha) \pi^{\alpha} \| u \|_{L_2(\Omega)}^2,\ \ u \in \wt{H}^\gamma(\Omega), \quad \intop_\Omega u(\x)\,d\x =0, \quad  \bxi\in\widetilde\Omega.
 \end{align*}
\end{proposition}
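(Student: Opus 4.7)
The statement is an immediate corollary of the comparison estimate \eqref{e1.10} combined with the lower bounds \eqref{e1.17}, \eqref{e1.18} for the reference form $a_{0}(\bxi;\alpha)$, so my plan is essentially to chain these together. Concretely, I would start from \eqref{e1.10}, which gives
\[
a(\bxi;\alpha,\mu)[u,u] \ge \mu_{-}\, a_{0}(\bxi;\alpha)[u,u], \qquad u \in \wt H^{\gamma}(\Omega),
\]
and then insert, in the two separate cases, the matching lower bound on $a_{0}(\bxi;\alpha)[u,u]$.

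For the first inequality there is nothing more to do: \eqref{e1.17} yields $a_{0}(\bxi;\alpha)[u,u]\ge c_{0}(d,\alpha)|\bxi|^{\alpha}\|u\|_{L_{2}(\Omega)}^{2}$ for all $u\in\wt H^{\gamma}(\Omega)$, and multiplication by $\mu_{-}$ gives the required bound. Note that \eqref{e1.17} itself is just \eqref{e1.12} together with the termwise pointwise bound \eqref{1.18a}: every Fourier frequency $|2\pi\n+\bxi|^{\alpha}$ is at least $|\bxi|^{\alpha}$, so the Parseval-type sum in \eqref{e1.12} is at least $c_{0}(d,\alpha)|\bxi|^{\alpha}\sum_{\n}|\wh u_{\n}|^{2}=c_{0}(d,\alpha)|\bxi|^{\alpha}\|u\|_{L_{2}(\Omega)}^{2}$.

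For the second inequality the zero-mean condition $\int_{\Omega} u(\x)\,d\x=0$ exactly says that the Fourier coefficient $\wh u_{\mathbf 0}$ vanishes, so the sum in \eqref{e1.12} runs only over $\n\in\Z^{d}\setminus\{\mathbf 0\}$; on that index set one uses \eqref{1.18b}, namely $|2\pi\n+\bxi|^{\alpha}\ge\pi^{\alpha}$ for every $\bxi\in\wt\Omega$ (the worst case being $\bxi$ at a corner of $\wt\Omega$ and $\n$ chosen so that $2\pi\n+\bxi$ lies at the nearest half-lattice point). This gives $a_{0}(\bxi;\alpha)[u,u]\ge c_{0}(d,\alpha)\pi^{\alpha}\|u\|_{L_{2}(\Omega)}^{2}$, and multiplication by $\mu_{-}$ finishes the argument.

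There is no real obstacle here; the substantive content has already been absorbed into Lemma \ref{lem1.2} and the elementary pointwise inequalities \eqref{1.18a}, \eqref{1.18b}. The only minor point worth highlighting in writing is that the zero-mean hypothesis is used exclusively to drop the $\n=\mathbf 0$ term from the Parseval expansion, which is why one may replace $|\bxi|^{\alpha}$ by the uniform constant $\pi^{\alpha}$ in the second estimate.
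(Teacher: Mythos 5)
Your argument is correct and is exactly the paper's proof: the proposition is stated in the paper as an immediate consequence of the comparison estimate \eqref{e1.10} together with the lower bounds \eqref{e1.17}--\eqref{e1.18} for $a_{0}(\bxi;\alpha)$, which in turn follow from the Parseval representation of Lemma \ref{lem1.2} and the elementary pointwise bounds \eqref{1.18a}, \eqref{1.18b}. Your observation that the zero-mean condition serves only to drop the $\n=\mathbf 0$ term from the Fourier sum is precisely the mechanism the paper uses.
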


\section{Estimates for the difference of  the quadratic forms $a(\bxi)$ and $a(\mathbf{0})$}

In this section we establish a number of estimates for the difference between the quadratic forms
$a(\bxi)$ and $a(\mathbf{0})$. These estimates are used later on.
It turns out that the case $0 < \alpha <1$ differs essentially from the case $1 \le \alpha < 2$.
We consider these two cases separately.

\subsection{The case $0< \alpha <1$}
In what follows we use for the sesquilinear form corresponding to quadratic form   \eqref{e1.9}
the same notation $a(\bxi)[u,v]$, $u,v \in \wt{H}^\gamma(\Omega)$, as for the quadratic form.
\begin{lemma}
\label{lem2.2}
Let conditions \eqref{e1.1} and \eqref{e1.2} be fulfilled, and assume that $0< \alpha <1$.
Then, for $\bxi \in \wt{\Omega}$, the formula for
the difference of sesquilinear forms $a(\bxi) - a(\mathbf{0})$ reads %satisfies the relation
\begin{equation*}
\label{e2.9}
a(\bxi)[u,v] - a({\mathbf 0})[u,v] =
  \sum_{\n\in \Z^d} \intop_{\Omega} d\y \intop_\Omega d\x \,\mu(\x,\y)
 \frac{ u(\x)\overline{v(\y)} (1 - e^{i\langle \bxi, \x +\n-\y\rangle}) }
 {|\x +\n-\y |^{d+\alpha}}, \quad u,v \in \wt{H}^\gamma(\Omega).
\end{equation*}
\end{lemma}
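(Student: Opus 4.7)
The plan is to start from the defining integral representation of the sesquilinear form $a(\bxi)[u,v]$ corresponding to quadratic form \eqref{e1.9}, and to unfold the outer integration $\int_{\R^d} d\y$ as $\sum_{\n\in\Z^d}\int_{\Omega+\n} d\y$. After the change of variables $\y \mapsto \y + \n$ (with the new $\y$ ranging over $\Omega$), the periodicity \eqref{e1.2} of $\mu$ together with the periodicity of $u,v\in\wt H^\gamma(\Omega)$ means that the only $\n$-dependence left sits in the exponential attached to $u(\y), v(\y)$ and in the distance $|\x-\y-\n|$.

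Then I would expand the numerator $(e^{i\langle\bxi,\x\rangle}u(\x)-e^{i\langle\bxi,\y+\n\rangle}u(\y))\overline{(e^{i\langle\bxi,\x\rangle}v(\x)-e^{i\langle\bxi,\y+\n\rangle}v(\y))}$ into the four bilinear pieces. The two diagonal pieces $u(\x)\overline{v(\x)}$ and $u(\y)\overline{v(\y)}$ are independent of $\bxi$ and cancel identically against the corresponding terms in $a(\mathbf{0})[u,v]$. Subtracting the $\bxi=\mathbf{0}$ off-diagonal pieces from the general-$\bxi$ off-diagonal pieces yields, for each $\n$, the integrand
\[
\frac{\mu(\x,\y)}{2|\x-\y-\n|^{d+\alpha}}\Bigl[u(\x)\overline{v(\y)}(1-e^{i\langle\bxi,\x-\y-\n\rangle})+u(\y)\overline{v(\x)}(1-e^{-i\langle\bxi,\x-\y-\n\rangle})\Bigr].
\]
The next step is to symmetrize: in the second piece I would swap $\x\leftrightarrow\y$ and $\n\leftrightarrow-\n$, invoking $\mu(\x,\y)=\mu(\y,\x)$ together with $|\y-\x+\n|=|\x-\y-\n|$ and $e^{-i\langle\bxi,\y-\x+\n\rangle}=e^{i\langle\bxi,\x-\y-\n\rangle}$, to show that the two pieces are equal. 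This absorbs the $\tfrac12$ and leaves the single sum asserted; one final renaming $\n\mapsto-\n$ brings the exponent into the form $\x+\n-\y$ written in the statement.

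The main technical issue, and the step where the hypothesis $0<\alpha<1$ enters, is the justification of absolute convergence of the resulting double series of integrals (needed both for the Fubini interchange that makes the symmetrization legitimate and for writing the cancellation of diagonal terms termwise). For $\n\neq\mathbf{0}$ the distance $|\x-\y-\n|$ is bounded below by a constant times $|\n|$ while $|1-e^{i\langle\bxi,\cdot\rangle}|\le 2$, so these terms are controlled by $C|\n|^{-d-\alpha}$ and sum over $\Z^d\setminus\{\mathbf{0}\}$ for any $\alpha>0$. For $\n=\mathbf{0}$ the near-diagonal singularity is tamed using the elementary bound $|1-e^{i\langle\bxi,\z\rangle}|\le|\bxi||\z|$, which reduces the kernel to one of order $|\x-\y|^{1-d-\alpha}$; this is integrable on $\Omega\times\Omega$ precisely when $\alpha<1$, which explains why a separate argument will be needed for $1\le\alpha<2$.
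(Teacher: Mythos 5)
Your argument follows the same route as the paper's proof: expand the numerator of $a(\bxi)[u,v]-a(\mathbf{0})[u,v]$ so the diagonal pieces cancel, tile $\R^d$ by the cells $\Omega+\n$ and translate back to $\Omega\times\Omega$ using periodicity of $\mu,u,v$, then symmetrize via $\x\leftrightarrow\y$, $\n\leftrightarrow-\n$ and $\mu(\x,\y)=\mu(\y,\x)$ to merge the two off-diagonal terms and absorb the factor $\tfrac12$ (the paper expands first and tiles second, you do the reverse, but the steps are interchangeable). Your added justification of absolute convergence --- $O(|\n|^{-d-\alpha})$ decay for $\n\neq\mathbf{0}$, and $|1-e^{i\langle\bxi,\z\rangle}|\le|\bxi||\z|$ taming the $\n=\mathbf{0}$ kernel to $|\x-\y|^{1-d-\alpha}$, integrable precisely when $\alpha<1$ --- is correct and makes explicit where the hypothesis $0<\alpha<1$ enters; the paper leaves this implicit in Lemma~\ref{lem2.2} and addresses it only in the Schur-test estimate of Lemma~\ref{lem2.3}.
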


\begin{proof}
We identify functions $u,v \in \wt{H}^\gamma(\Omega)$ with their periodic extensions in $\R^d$.
Taking into account the periodicity of functions $u$, $v$ and $\mu$ and the symmetry condition  $\mu(\x,\y) = \mu(\y,\x)$
we have
$$
\begin{aligned}
a(\bxi)[u,v] \!-\! a({\mathbf 0})[u,v] &= \frac{1}{2} \intop_{\R^d} d\y \intop_\Omega d\x \,\mu(\x,\y)
 \frac{ (u(\x) - e^{i\langle \bxi, \y-\x\rangle}u(\y)) (\overline{v(\x)} - e^{-i\langle \bxi, \y-\x\rangle}\overline{v(\y)})}{|\x-\y|^{d+\alpha}}
\\
&- \frac{1}{2} \intop_{\R^d} d\y \intop_\Omega d\x \,\mu(\x,\y)
 \frac{ (u(\x) - u(\y)) (\overline{v(\x)} - \overline{v(\y)})}{|\x-\y|^{d+\alpha}}
 \\
 &= \frac{1}{2} \intop_{\R^d} d\y \intop_\Omega d\x \,\mu(\x,\y)
 \frac{ (u(\x)\overline{v(\y)} (1 - e^{-i\langle \bxi, \y-\x\rangle}) + u(\y)\overline{v(\x)} (1 - e^{i\langle \bxi, \y-\x\rangle})}
 {|\x-\y|^{d+\alpha}}
 \\
 &= \sum_{\n\in \Z^d}\frac{1}{2} \intop_{\Omega} d\y \intop_\Omega d\x \,\mu(\x,\y)
 \frac{ u(\x)\overline{v(\y)} (1 - e^{-i\langle \bxi, \y -\n-\x\rangle}) }
 {|\x -\y + \n|^{d+\alpha}}
 \\
 &+ \sum_{\n\in \Z^d}\frac{1}{2} \intop_{\Omega} d\y \intop_\Omega d\x \,\mu(\x,\y)
 \frac{  u(\y)\overline{v(\x)} (1 - e^{i\langle \bxi, \y+\n-\x\rangle})}
 {|\x - \n -\y|^{d+\alpha}}
 \\
 &=
  \sum_{\n\in \Z^d} \intop_{\Omega} d\y \intop_\Omega d\x \,\mu(\x,\y)
 \frac{ u(\x)\overline{v(\y)} (1 - e^{-i\langle \bxi, \y -\n-\x\rangle}) }
 {|\x -\y + \n|^{d+\alpha}}.
\end{aligned}
$$
\end{proof}
As a consequence of Lemma \ref{lem2.2} we obtain
\begin{lemma}
\label{lem2.3}
For $0< \alpha < 1$ the operator $\Delta \A(\bxi) = \Delta \A(\bxi;\alpha,\mu):= \A(\bxi;\alpha,\mu) - \A(\mathbf{0};\alpha,\mu)$ is well-defined and bounded in $L_2(\Omega)$. It is an integral operator with
the kernel
\begin{equation}
\label{e2.11}
K(\x,\y;\bxi) := \mu(\x,\y) \sum_{\n\in \Z^d}
 \frac{ (1 - e^{i\langle \bxi, \x +\n-\y\rangle})}{|\x +\n-\y |^{d+\alpha}},\quad \x,\y \in \Omega,\ \bxi \in \wt{\Omega}.
\end{equation}
The estimate
\begin{equation}
\label{e2.10}
\| \Delta \A(\bxi)\|_{L_2(\Omega) \to L_2(\Omega)} \le  \mu_+ c_1(d,\alpha) |\bxi|^\alpha
\end{equation}
holds, where the constant  $c_1(d,\alpha)$ is defined in  \eqref{e2.12} below.
\end{lemma}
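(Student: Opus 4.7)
The plan is to extract the integral kernel of $\Delta\A(\bxi)$ from the formula in Lemma \ref{lem2.2} and then estimate the $L_2(\Omega)\to L_2(\Omega)$ norm by Schur's test. By Lemma \ref{lem2.2} the sesquilinear form $a(\bxi)[u,v]-a(\mathbf 0)[u,v]$ equals $\int_\Omega\!\int_\Omega K(\x,\y;\bxi)\,u(\x)\overline{v(\y)}\,d\x\,d\y$ with the function $K(\x,\y;\bxi)$ from \eqref{e2.11}, provided the sum defining $K$ is absolutely convergent and the integral operator associated with $K$ is bounded. Since the bounded self-adjoint operator $\Delta\A(\bxi)$ is uniquely determined by its sesquilinear form, identifying it with this integral operator reduces everything to the norm estimate \eqref{e2.10}.

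To apply Schur's test, the key tool will be the tiling identity
\begin{equation*}
\sum_{\n\in\Z^d}\intop_\Omega f(\x+\n-\y)\,d\y=\intop_{\R^d} f(\z)\,d\z,
\end{equation*}
valid for any non-negative measurable $f$ because the translates $\x+\n-\Omega$, $\n\in\Z^d$, tile $\R^d$ up to a null set. Applied with $f(\z)=|1-e^{i\langle\bxi,\z\rangle}|/|\z|^{d+\alpha}$ together with the bound $\mu\le\mu_+$, this yields
\begin{equation*}
\sup_{\x\in\Omega}\intop_\Omega|K(\x,\y;\bxi)|\,d\y\le\mu_+\intop_{\R^d}\frac{|1-e^{i\langle\bxi,\z\rangle}|}{|\z|^{d+\alpha}}\,d\z.
\end{equation*}
The rescaling $\z\mapsto|\bxi|^{-1}\z$ together with a rotation sending $\bxi/|\bxi|$ to $\mathbf e_1$ (using rotational symmetry of $|\cdot|$) extracts the factor $|\bxi|^\alpha$ and reduces the right-hand side to $\mu_+|\bxi|^\alpha c_1(d,\alpha)$, where the constant is defined by
\begin{equation*}
c_1(d,\alpha):=\intop_{\R^d}\frac{|1-e^{iw_1}|}{|\w|^{d+\alpha}}\,d\w.
\end{equation*}
The symmetry $\mu(\x,\y)=\mu(\y,\x)$ combined with the change of variable $\n\mapsto-\n$ in the defining sum gives $K(\y,\x;\bxi)=\overline{K(\x,\y;\bxi)}$, so the other Schur marginal admits the same bound, and Schur's test yields \eqref{e2.10}; boundedness of $\Delta\A(\bxi)$ and absolute convergence of the kernel are by-products.

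The technical heart of the argument is the finiteness of $c_1(d,\alpha)$, and this is precisely where the hypothesis $\alpha<1$ plays its role. Near the origin, the estimate $|1-e^{iw_1}|\le|w_1|\le|\w|$ bounds the integrand by $|\w|^{-(d+\alpha-1)}$, which is locally integrable exactly when $\alpha<1$; at infinity the trivial bound $|1-e^{iw_1}|\le 2$ yields the integrand $2|\w|^{-(d+\alpha)}$, integrable thanks to $\alpha>0$. The failure of near-zero integrability at $\alpha=1$ is what forces the essentially different treatment of the case $1\le\alpha<2$ in the sequel; indeed, for $\alpha\ge 1$ the operator $\Delta\A(\bxi)$ ceases to be bounded on $L_2(\Omega)$ and only form-difference estimates are available.
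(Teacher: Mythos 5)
Your proposal is correct and follows essentially the same path as the paper: extract the kernel from Lemma \ref{lem2.2}, apply the tiling identity to compute the Schur marginal, observe the rescaling/rotation gives the $|\bxi|^\alpha$ factor with $c_1(d,\alpha)=\int_{\R^d}|1-e^{iw_1}|\,|\w|^{-d-\alpha}d\w$ (which is exactly the paper's $\int_{\R^d}2|\sin(w_1/2)|\,|\w|^{-d-\alpha}d\w$), handle the other marginal by the Hermitian symmetry of $K$, and conclude by Schur's test, with $\alpha<1$ precisely ensuring local integrability near the origin. The only place you are a bit looser than the paper is the identification step: you assert that "the bounded self-adjoint operator $\Delta\A(\bxi)$ is uniquely determined by its sesquilinear form," but a priori $\Delta\A(\bxi)$ is only a formal difference of unbounded operators; the paper instead argues that boundedness of the integral operator $\mathbb{K}(\bxi)$ together with $a(\bxi)-a(\mathbf 0)=(\mathbb{K}(\bxi)\cdot,\cdot)$ forces $\Dom\A(\bxi)=\Dom\A(\mathbf 0)$, so the operator difference is well-defined on this dense set and extends to $\mathbb{K}(\bxi)$. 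Also, your closing remark that $\Delta\A(\bxi)$ "ceases to be bounded" for $\alpha\ge1$ is stronger than what the paper claims (it only says the operator "need not be bounded") and is not needed; it is unproved commentary rather than a gap in the argument.
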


\begin{proof}
We have
\begin{equation}\label{andr1}
\begin{aligned}
\intop_\Omega |K(\x,\y;\bxi)| \,d\y \le \mu_+ \sum_{\n \in \Z^d}
\intop_\Omega  \frac{ |1 - e^{i\langle \bxi, \x +\n-\y\rangle}|}{|\x +\n-\y |^{d+\alpha}} \,d\y
= \mu_+
\intop_{\R^d}  \frac{ |1 - e^{i\langle \bxi, \x -\y\rangle}|}{|\x -\y |^{d+\alpha}} \,d\y
\\
= \mu_+
\intop_{\R^d}  \frac{ |1 - e^{i\langle \bxi, \z\rangle}|}{|\z |^{d+\alpha}} \,d\z
= \mu_+ \intop_{\R^d}  \frac{ 2 \left|\sin \frac{\langle \bxi, \z\rangle}{2}\right|}{|\z |^{d+\alpha}} \,d\z
=  \mu_+ c_1(d,\alpha) |\bxi|^\alpha
\end{aligned}
\end{equation}
with
%где
\begin{equation}
\label{e2.12}
c_1(d,\alpha) := \intop_{\R^d}  \frac{ 2 \left|\sin \frac{z_1}{2}\right|}{|\z |^{d+\alpha}} \,d\z < \infty,\quad 0< \alpha <1.
\end{equation}
Since $0< \alpha <1$, the integral on the right-hand side of \eqref{e2.12} converges. Notice however that
 $c_1(d,\alpha) = O((1-\alpha)^{-1})$ as $\alpha \to 1$.
From \eqref{andr1} it follows that
$$
\sup_{\x \in \Omega} \intop_\Omega |K(\x,\y;\bxi)| \,d\y \le  \mu_+ c_1(d,\alpha) |\bxi|^\alpha.
$$
The quantity $\sup_{\y \in \Omega} \intop_\Omega |K(\x,\y;\bxi)| \,d\x$ admits a similar estimate.

Denote by $\mathbb{K}(\bxi)$ the integral operator in $L_2(\Omega)$ with the kernel $K(\x,\y;\bxi)$ defined in \eqref{e2.11}.
By the Shur test $\mathbb{K}(\bxi)$ is a bounded operator in $L_2(\Omega)$, and its norm can be estimated as follows:
\begin{equation}
\label{e2.13}
\| \mathbb{K}(\bxi) \|_{L_2(\Omega) \to L_2(\Omega)} \le  \mu_+ c_1(d,\alpha) |\bxi|^\alpha,\quad
\bxi \in \wt{\Omega},\quad 0< \alpha <1.
\end{equation}
Due to Lemma \ref{lem2.2}
$$
a(\bxi)[u,v] - a({\mathbf 0})[u,v] =
   \intop_{\Omega} d\y \intop_\Omega d\x \,K(\x,\y;\bxi) u(\x)\overline{v(\y)}
   = ({\mathbb K}(\bxi)u,v)_{L_2(\Omega)}, \quad u,v \in \wt{H}^\gamma(\Omega).
$$
Since the operator $\mathbb{K}(\bxi)$ is bounded,
the operators  $\A(\bxi)$ and $\A(\mathbf{0})$ have the same domain, and
$$
\A(\bxi) u - \A(\mathbf{0}) u =   {\mathbb K}(\bxi)u, \quad u \in \Dom \A(\bxi) = \Dom \A(\mathbf{0}).
$$
Considering the fact that $\Dom \A(\bxi) = \Dom \A(\mathbf{0})$ is dense in $L_2(\Omega)$ one can extend
the operator $\Delta \A(\bxi) :=\A(\bxi)  - \A(\mathbf{0})$ to the whole $L_2(\Omega)$. The extended operator
is bounded and coincides with  $\mathbb{K}(\bxi)$. Finally, estimate \eqref{e2.10} follows from \eqref{e2.13}.
\end{proof}

\subsection{The case $1 \le \alpha <2$}

\begin{lemma}
\label{lem2.4}
Assume that  $1 \le \alpha < 2$.  Then
\begin{equation}
\label{*.0}
\left|a(\bxi) [u,u] - a(\mathbf{0}) [u,u] \right| \le \check{c}(d,\alpha) \Theta(\bxi) \left( a(\mathbf{0}) [u,u] + \mu_+ \|u\|^2_{L_2(\Omega)}\right), \quad u\in \wt{H}^\gamma(\Omega), \quad \bxi \in \wt{\Omega},
\end{equation}
where
\begin{equation}
\label{*.00}
\Theta(\bxi) := \begin{cases} |\bxi|, &\hbox{if } 1< \alpha <2, \\ |\bxi| \left(1+ |\operatorname{ln} |\bxi| | \right), 
&\hbox{if } \alpha=1. \end{cases}
\end{equation}
\end{lemma}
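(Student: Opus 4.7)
The plan is to start from the polarization identity
$$
|U_\bxi(\x)-U_\bxi(\y)|^2 - |u(\x)-u(\y)|^2 = 2\,\mathrm{Re}\bigl[(1-e^{i\langle\bxi,\x-\y\rangle})u(\x)\overline{u(\y)}\bigr],
$$
where $U_\bxi(\x)=e^{i\langle\bxi,\x\rangle}u(\x)$, which gives an exact formula for $a(\bxi)[u,u]-a(\mathbf{0})[u,u]$ as a real integral. Using the trivial splitting $u(\x)\overline{u(\y)}=|u(\y)|^2+\overline{u(\y)}(u(\x)-u(\y))$, I break the difference into a \emph{diagonal} piece $I_A$ involving $|u(\y)|^2(1-\cos\langle\bxi,\x-\y\rangle)$ and a \emph{cross} piece $I_B$ involving $\overline{u(\y)}(u(\x)-u(\y))(1-e^{i\langle\bxi,\x-\y\rangle})$. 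The term $I_A$ is handled at once: substitute $\z=\x-\y$, apply Fubini together with the $\Z^d$-periodicity identity $\int_\Omega|u(\x-\z)|^2d\x=\|u\|_{L_2(\Omega)}^2$, and invoke Lemma~\ref{lem1.1} to get $|I_A|\le \mu_+c_0(d,\alpha)|\bxi|^\alpha\|u\|^2$; since $1\le\alpha<2$ and $|\bxi|\le\pi\sqrt d$, this is $\le C\,\mu_+|\bxi|\|u\|^2\le C\Theta(\bxi)\mu_+\|u\|^2$.

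The core work is to estimate $I_B$, and here I will split the $\y$-integration into three annular regions with respect to $|\x-\y|$. On the \emph{small region} $|\x-\y|\le1$ I use the bound $|1-e^{i\langle\bxi,\x-\y\rangle}|\le|\bxi||\x-\y|$ and Cauchy--Schwarz, pairing the factor $|u(\x)-u(\y)|/|\x-\y|^{(d+\alpha)/2}$ against $|u(\y)|/|\x-\y|^{(d+\alpha-2)/2}$. The first factor is controlled by $(2a_0(\mathbf{0})[u,u])^{1/2}\le C\,a(\mathbf{0})[u,u]^{1/2}$ via Lemma~\ref{lem1.2} and \eqref{e1.4}, while for the second, Fubini plus periodicity reduces the estimate to the convergent integral $\int_{|\z|\le1}|\z|^{2-d-\alpha}d\z\le C(2-\alpha)^{-1}$. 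AM--GM on the resulting product yields the contribution $C\,|\bxi|\bigl(a(\mathbf{0})[u,u]+\mu_+\|u\|^2\bigr)$.

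On the \emph{medium region} $1<|\x-\y|\le|\bxi|^{-1}$ I again use $|1-e^{i\langle\bxi,\x-\y\rangle}|\le|\bxi||\x-\y|$, but here the Cauchy--Schwarz route is wasteful; instead I discard the cancellation by $|u(\x)-u(\y)|\le|u(\x)|+|u(\y)|$ and apply Fubini together with periodicity, reducing the contribution to $2\mu_+|\bxi|\|u\|^2\omega_{d-1}\int_1^{1/|\bxi|}r^{-\alpha}dr$. For $1<\alpha<2$ this integral is $O((\alpha-1)^{-1})$, contributing $C\,\mu_+|\bxi|\|u\|^2$; for $\alpha=1$ it equals $|\ln|\bxi||$, producing exactly the logarithmic enhancement in $\Theta(\bxi)$. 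Finally, on the \emph{far region} $|\x-\y|>|\bxi|^{-1}$ I use $|1-e^{i\langle\bxi,\x-\y\rangle}|\le2$ and the same pointwise bound $|u(\x)-u(\y)|\le|u(\x)|+|u(\y)|$, which gives $C\,\mu_+\|u\|^2\int_{|\bxi|^{-1}}^\infty r^{-\alpha-1}dr=C\,\mu_+|\bxi|^\alpha\|u\|^2\le C\mu_+|\bxi|\|u\|^2$. Collecting $I_A$ and the three contributions to $I_B$ gives the desired inequality \eqref{*.0}.

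The conceptual obstacle is that for $\alpha\ge1$ the naive bound $|1-e^{i\langle\bxi,\z\rangle}|\le|\bxi||\z|$ against the kernel $|\z|^{-d-\alpha}$ produces a non-integrable singularity at $\z=0$, so one cannot avoid spending the Sobolev-type regularity encoded in $a(\mathbf{0})[u,u]$ on the small region. On the other hand, on the large scale one cannot use the $a(\mathbf{0})$-energy because one needs a factor of $|\bxi|$ in front of the $\|u\|^2$ term, not just a factor of $|\bxi|^\alpha$; that forces the three-region split with different estimators on each piece. The logarithm at the critical value $\alpha=1$ is precisely the trace of the scale $r\in[1,|\bxi|^{-1}]$ at which neither small-scale regularity nor large-scale decay of the kernel is gainful.
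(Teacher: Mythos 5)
Your proof is correct and takes essentially the same approach as the paper: both decompose the difference into a diagonal piece (controlled via the Fourier identity of Lemma~\ref{lem1.1} and periodicity) and a cross piece, and both treat the cross piece by scale so that the $a(\mathbf{0})$-energy absorbs the singularity on small $|\x-\y|$ while kernel decay handles the rest, with the logarithm at $\alpha=1$ emerging from the annulus $1\lesssim|\x-\y|\lesssim|\bxi|^{-1}$. The only organizational difference is that the paper uses a near/far dichotomy ($\y\in[-2,2]^d$ vs.\ its complement) plus a Cauchy--Schwarz step before splitting the far integral into the same two annuli you use, whereas you split directly into three regions and apply the triangle inequality — cosmetically different, but the same ingredients and the same $\Theta(\bxi)$-dependence.
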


\begin{proof}
According to \eqref{e1.9}, for all  $u\in \wt{H}^\gamma(\Omega)$ it holds
 \begin{equation*}
a(\bxi) [u,u] - a(\mathbf{0}) [u,u] := \frac{1}{2} \intop_{\R^d} d\y \intop_{\Omega} d\x\, \mu(\x,\y)
\frac{ \left( | u(\x)- e^{i \langle \bxi,\y -\x\rangle} u(\y)|^2 - | u(\x)- u(\y)|^2 \right)}{|\x - \y|^{d+\alpha}}.
\end{equation*}
Due to an evident relation
$$
 | u(\x)- e^{i \langle \bxi,\y -\x\rangle} u(\y)|^2 - | u(\x)- u(\y)|^2
 = | u(\x)|^2 |1 - e^{i \langle \bxi,\y -\x\rangle}|^2 +
  2 \operatorname{Re}( u(\x)- u(\y)) \overline{u(\x)} ( e^{i \langle \bxi,\y -\x\rangle} -1)
 $$
 we have
 \begin{equation}
\label{*.2}
a(\bxi) [u,u] - a(\mathbf{0}) [u,u] = b_1(\bxi) [u,u] + b_2(\bxi) [u,u],
\end{equation}
where
\begin{align}
\label{*.3}
b_1(\bxi) [u,u] := &  \frac{1}{2} \intop_{\R^d} d\y \intop_{\Omega} d\x\, \mu(\x,\y)
\frac{  | u(\x)|^2 | 1 - e^{i \langle \bxi,\y -\x\rangle} |^2}{|\x - \y|^{d+\alpha}},
\\
\label{*.4}
b_2(\bxi) [u,u] := &  \operatorname{Re} \intop_{\R^d} d\y \intop_{\Omega} d\x\, \mu(\x,\y)
\frac{ ( u(\x)- u(\y)) \overline{u(\x)} ( e^{i \langle \bxi,\y -\x\rangle} -1)}{ |\x - \y|^{d+\alpha} }.
\end{align}
First, we estimate the quadratic form in  \eqref{*.3}:
\begin{equation}
\label{*.5}
b_1(\bxi) [u,u] \le \frac{\mu_+}{2}  \intop_{\Omega} d\x\,| u(\x)|^2  \intop_{\R^d} d\z
\frac{  | 1 - e^{i \langle \bxi,\z \rangle} |^2}{  |\z|^{d+\alpha}} = \mu_+ V_\alpha(\bxi) \|u\|^2_{L_2(\Omega)}
= \mu_+ c_0(d,\alpha) |\bxi |^\alpha  \|u\|^2_{L_2(\Omega)};
\end{equation}
here the notation from \eqref{e1.6b} and relation \eqref{e1.6c} have been used.

We turn to estimating the form in  \eqref{*.4}:
\begin{equation}
\label{*.6}
 | b_2(\bxi) [u,u] |  \le \
  \intop_{\R^d} d\y \intop_{\Omega} d\x\, \mu(\x,\y)
\frac{ |u(\x)- u(\y)| | u(\x)| | e^{i \langle \bxi,\y -\x\rangle} -1|}{ |\x - \y|^{d+\alpha} }=
 J_1(\bxi)[u] + J_2(\bxi)[u]
\end{equation}
with
\begin{align}
\label{*.7}
J_1(\bxi)[u] :=&  \intop_{[-2,2]^d} d\y \intop_{\Omega} d\x\,  \mu(\x,\y)
\frac{ |u(\x)- u(\y)| | u(\x)| | e^{i \langle \bxi,\y -\x\rangle} -1|}{ |\x - \y|^{d+\alpha} },
\\
\label{*.8}
J_2(\bxi)[u] := & \intop_{\R^d \setminus [-2,2]^d} d\y \intop_{\Omega} d\x\,  \mu(\x,\y)
\frac{ |u(\x)- u(\y)| | u(\x)| | e^{i \langle \bxi,\y -\x\rangle} -1|}{ |\x - \y|^{d+\alpha} }.
\end{align}
By the Cauchy inequality, the form in \eqref{*.8} satisfies the estimate
%В силу неравенства Коши для  формы \eqref{*.8} справедлива оценка
\begin{equation}
\label{*.9}
J_2(\bxi)[u] \le \mu_+ \left(J_2^{(1)}(\bxi)[u]\right)^{1/2} \left(J_2^{(2)}(\bxi)[u]\right)^{1/2},
\end{equation}
where
%где
\begin{align}
\label{*.10}
J_2^{(1)}(\bxi)[u] := &
 \intop_{\R^d \setminus [-2,2]^d} d\y \intop_{\Omega} d\x\,
\frac{ |u(\x)- u(\y)|^2  | e^{i \langle \bxi,\y -\x\rangle} -1|}{ |\x - \y|^{d+\alpha} },
\\
\label{*.11}
J_2^{(2)}(\bxi)[u] := & \intop_{\R^d \setminus [-2,2]^d} d\y \intop_{\Omega} d\x\,
\frac{ |u(\x)|^2  | e^{i \langle \bxi,\y -\x\rangle} -1|}{ |\x - \y|^{d+\alpha} }.
\end{align}
The form on the right-hand side of   \eqref{*.11} can be estimated as follows:
\begin{equation}
\label{*.12}
J_2^{(2)}(\bxi)[u] =  \intop_{\Omega} d\x\, |u(\x)|^2
\intop_{\R^d \setminus [-2,2]^d} d\y \frac{ | e^{i \langle \bxi,\y -\x\rangle} -1|}{ |\x - \y|^{d+\alpha} }
\le   \intop_{\Omega} d\x\, |u(\x)|^2
\intop_{|\z| \ge 1} d\z \frac{ | e^{i \langle \bxi,\z \rangle} -1|}{ |\z |^{d+\alpha} }.
\end{equation}
Making the change of variables $\w= |\bxi| \z$ and letting $\bxi = |\bxi| \widehat{\bxi}$ we obtain
\begin{equation*}
\intop_{|\z| \ge 1} d\z \frac{ | e^{i \langle \bxi,\z \rangle} -1|}{ |\z |^{d+\alpha} } = |\bxi|^\alpha
\intop_{|\w| \ge |\bxi|} d\w \frac{ | e^{i \langle \widehat{\bxi},\w \rangle} -1|}{ |\w |^{d+\alpha} }
= |\bxi|^\alpha \intop_{|\w| \ge |\bxi|} d\w \frac{ | e^{i w_1 } -1|}{ |\w |^{d+\alpha} }.
\end{equation*}
If $|\bxi| \ge 1$, then
$$
 \intop_{|\w| \ge |\bxi|} d\w \frac{ | e^{i w_1 } -1|}{ |\w |^{d+\alpha} }
 \le \intop_{|\w| \ge 1} d\w \frac{ | e^{i w_1 } -1|}{ |\w |^{d+\alpha} } \le \frac{2 \omega_d}{\alpha},
$$
where $\omega_d$ is the area of the unit sphere in  $\R^d$. If $|\bxi| \le 1$, then
$$
 \intop_{|\w| \ge |\bxi|} d\w \frac{ | e^{i w_1 } -1|}{ |\w |^{d+\alpha} }
 \le \frac{2 \omega_d}{\alpha}  + \intop_{|\bxi| \le |\w| \le 1} d\w \frac{ | e^{i w_1 } -1|}{ |\w |^{d+\alpha} } \le
 \frac{2 \omega_d}{\alpha} +2 \!\! \intop_{|\bxi| \le |\w| \le 1} d\w \frac{1}{ |\w |^{d+\alpha -1} }.
$$
The straightforward computation yields
$$
 \intop_{|\bxi| \le |\w| \le 1} d\w \frac{1}{ |\w |^{d+\alpha -1} } \le
 \begin{cases}  \frac{\omega_d}{\alpha - 1} |\bxi|^{1-\alpha}, &\hbox{if } 1< \alpha <2, \\ \omega_d |\operatorname{ln} |\bxi| |, &\hbox{if } \alpha=1, \end{cases}
$$
and finally we have
\begin{equation}
\label{*.13}
\intop_{|\z| \ge 1} d\z \frac{ | e^{i \langle \bxi,\z \rangle} -1|}{ |\z |^{d+\alpha} }
\le c_2(d,\alpha) \Theta(\bxi)
\end{equation}
with $\Theta(\bxi)$ defined in \eqref{*.00}.
Notice that $c_2(d,\alpha) = O((\alpha -1)^{-1})$ as $\alpha \to 1$.  For $\alpha=1$ we have
$c_2(d,1)=2 \omega_d$, however, in this case the function $\Theta$ is defined in a different way.
From  \eqref{*.12} and \eqref{*.13} it follows that
\begin{equation}
\label{*.14}
J_2^{(2)}(\bxi)[u] \le c_2(d,\alpha) \Theta(\bxi) \|u\|^2_{L_2(\Omega)}.
\end{equation}
We proceed to estimating the form $J_2^{(1)}$ defined in \eqref{*.10}:
\begin{equation}
\label{*.15}
J_2^{(1)}(\bxi)[u] \le
 \intop_{\R^d \setminus [-2,2]^d} d\y \intop_{\Omega} d\x\,
\frac{(2 |u(\x)|^2 +2| u(\y)|^2)  | e^{i \langle \bxi,\y -\x\rangle} -1|}{ |\x - \y|^{d+\alpha} } =
2 J_2^{(2)}(\bxi)[u] + 2 \widetilde{J}_2^{(1)}(\bxi)[u],
\end{equation}
where
\begin{equation*}
\begin{aligned}
 \widetilde{J}_2^{(1)}(\bxi)[u] &:=
  \intop_{\R^d \setminus [-2,2]^d} d\y \intop_{\Omega} d\x\,
\frac{| u(\y)|^2 | e^{i \langle \bxi,\y -\x\rangle} -1|}{ |\x - \y|^{d+\alpha} }
\\
&= \sum_{\n \in \Z^d: (\Omega + \n) \subset \R^d \setminus (-2,2)^d } \intop_{\Omega} d\y | u(\y)|^2 \intop_{\Omega} d\x\,
\frac{| e^{i \langle \bxi,\y +\n -\x\rangle} -1|}{ |\x - \y - \n|^{d+\alpha} }.
\end{aligned}
\end{equation*}
Taking into account \eqref{*.13} we obtain
$$
\widetilde{J}_2^{(1)}(\bxi)[u] \le  \intop_{\Omega} d\y | u(\y)|^2 \intop_{|\z| \ge 1} d\z\,
\frac{| e^{i \langle \bxi,\z\rangle} -1|}{ |\z|^{d+\alpha} } \le  c_2(d,\alpha) \Theta(\bxi) \|u\|^2_{L_2(\Omega)}.
$$
Combining the latter inequality with   \eqref{*.14} and  \eqref{*.15} yields
\begin{equation}
\label{*.17}
J_2^{(1)}(\bxi)[u] \le 4 c_2(d,\alpha) \Theta(\bxi) \|u\|^2_{L_2(\Omega)}.
\end{equation}
Considering estimates \eqref{*.14} and \eqref{*.17} we derive from \eqref{*.9} the following inequality
\begin{equation}
\label{*.18}
J_2(\bxi)[u] \le 2 \mu_+ c_2(d,\alpha) \Theta(\bxi) \|u\|^2_{L_2(\Omega)}.
\end{equation}
It remains to estimate the form $J_1(\bxi)$ defined in  \eqref{*.7}.  To this end, using the simple estimate
$| e^{i \langle \bxi,\y -\x\rangle} -1| \le |\bxi||\x-\y|$  and the Cauchy inequality, we obtain
\begin{equation}
\label{*.19}
\begin{aligned}
J_1(\bxi)[u] &\le  |\bxi| \intop_{[-2,2]^d} d\y \intop_{\Omega} d\x\,  \mu(\x,\y)
\frac{ |u(\x)- u(\y)| | u(\x)| }{ |\x - \y|^{d+\alpha -1} }
\\
& \le
|\bxi| \Bigl( \intop_{[-2,2]^d} d\y \intop_{\Omega} d\x\,  \mu(\x,\y)
\frac{ |u(\x)- u(\y)|^2  }{ |\x - \y|^{d+\alpha} } \Bigr)^{1/2} \Bigl(\mu_+ \intop_{[-2,2]^d} d\y \intop_{\Omega} d\x\,
\frac{ |u(\x)|^2  }{ |\x - \y|^{d+\alpha-2} } \Bigr)^{1/2}
\\
& \le |\bxi|  \left( \mu_+ c_3 (d,\alpha)\right)^{1/2} \left(a(\mathbf{0})[u,u] \right)^{1/2}
\|u\|_{L_2(\Omega)};
\end{aligned}
\end{equation}
here we have also used the elementary inequality
$$
 \intop_{[-2,2]^d} d\y \intop_{\Omega} d\x\,
\frac{ |u(\x)|^2  }{ |\x - \y|^{d+\alpha-2} }  \le \intop_{\Omega} d\x\, |u(\x)|^2 \intop_{|\z|\le 4\sqrt{d}} \frac{d\z}{|\z|^{d+\alpha -2}} =
c_3(d,\alpha) \|u\|_{L_2(\Omega)}^2
$$
with $c_3(d,\alpha) =\frac{ \omega_d (4 \sqrt{d})^{2 - \alpha}}{2-\alpha}$.
Inserting estimates   \eqref{*.18} and  \eqref{*.19} in \eqref{*.6} we obtain
\begin{equation}
\label{*.20}
 | b_2(\bxi) [u,u] |  \le  |\bxi|  \left( \mu_+ c_3 (d,\alpha)\right)^{1/2} \left(a(\mathbf{0})[u,u] \right)^{1/2}
\|u\|_{L_2(\Omega)}+2 \mu_+ c_2(d,\alpha) \Theta(\bxi) \|u\|^2_{L_2(\Omega)},
\end{equation}
and the desired estimate  \eqref{*.0} follows from \eqref{*.2}, \eqref{*.5} and \eqref{*.20}.
\end{proof}

\section{Threshold characteristics of L\'evy-type operators \\ near the bottom of the spectrum.}

\subsection{The edge of spectrum of the operator $\A(\bxi;\alpha,\mu)$}
Denote by $\lambda_j(\bxi)$, $j \in \N$, the eigenvalues of the operator $\A(\bxi)$ that are enumerated in non-decreasing
order taking into account the  multiplicities. The eigenvalues of the operator  $\A_0(\bxi)$ are denoted by
$\lambda_j^0(\bxi)$, $j \in \N$.
In view of \eqref{e1.10} the variational principle implies the inequalities
\begin{equation}
\label{e1.20}
  \mu_- \lambda_j^0(\bxi) \le \lambda_j(\bxi) \le \mu_+ \lambda_j^0(\bxi),
 \quad  j \in \N, \quad  \bxi\in\widetilde\Omega.
 \end{equation}
Thanks to the diagonalization, the eigenvalues of operator $\A_0(\bxi)$ admit the following explicit formula:
%$ \lambda_1^0(\bxi) =
$c_0(d,\alpha) |2\pi \n + \bxi|^\alpha$, $\n \in \Z^d$, the corresponding eigenfunctions are
$e^{2\pi i \langle \n, \x \rangle}$.   The first eigenvalue is given by
\begin{equation}
\label{e1.21}
  \lambda_1^0(\bxi) = c_0(d,\alpha) |\bxi|^\alpha, \quad \bxi \in \wt{\Omega},
 \end{equation}
 and  $\mathbf{1}_{\Omega}$ is the first eigenfunction.
Since
   $$
   |\bxi| < |2\pi \n + \bxi|, \quad \bxi \in \operatorname{Int} \wt{\Omega} = (-\pi,\pi)^d, \ \ \n \in \Z^d \setminus \mathbf{0},
   $$
then, for $\bxi \in (-\pi,\pi)^d$,  the first eigenvalue of operator  $\A_0(\bxi)$  is simple, and
$\mathcal{L}\{\mathbf{1}_{\Omega}\}$ is the corresponding eigenspace.
Due to  \eqref{1.18b} and \eqref{1.18c} the following relations hold:
\begin{align}
\label{e1.22}
  \lambda_2^0(\bxi) &=  c_0(d,\alpha) \min_{\n \in \Z^d \setminus \mathbf{0}}
   |2\pi \n + \bxi|^\alpha \ge c_0(d,\alpha) \pi^\alpha, \quad \bxi \in \wt{\Omega},
   \\
   \label{e1.22a}
   \lambda_2^0(\mathbf{0}) &=  c_0(d,\alpha) (2\pi)^\alpha,
  \end{align}
and,  as a consequence of \eqref{e1.20}--\eqref{e1.22a}, we obtain
\begin{align}
\label{e1.23}
 \mu_- c_0(d,\alpha) |\bxi|^\alpha \le \,&\lambda_1(\bxi) \le \mu_+ c_0(d,\alpha) |\bxi|^\alpha,
 \quad   \bxi\in\widetilde\Omega,
\\
\label{e1.24}
  &\lambda_2(\bxi) \ge \mu_- c_0(d,\alpha) \pi^\alpha =: d_0,
 \quad   \bxi\in\widetilde\Omega,
 \\
\label{e1.24a}
  &\lambda_2(\mathbf{0}) \ge \mu_- c_0(d,\alpha) (2\pi)^\alpha = 2^\alpha d_0.
\end{align}

According to Lemma \ref{lem1.4},  under conditions \eqref{e1.1} and \eqref{e1.2} the lower edge of spectrum
of the operator  $\A(\mathbf{0};\alpha,\mu)$ consists of a simple isolated eigenvalue  $\lambda_1(\mathbf{0}) =0$,
and the corresponding eigenspace  is $\mathcal{L}\{\mathbf{1}_{\Omega}\}$.
Due to \eqref{e1.24a} the distance from the point $\lambda_1(\mathbf{0}) =0$ to the remaining part of the spectrum
of operator  $\A(\mathbf{0};\alpha,\mu)$ is at least $2^\alpha d_0$.

Letting
%Положим
\begin{equation}
\label{delta0}
\delta_{0}(\alpha,\mu):=  \pi \left( \frac{\mu_{-}}{3 \mu_{+}}\right)^{1/\alpha},
\end{equation}
one can easily check that $\delta_{0}(\alpha,\mu) < \pi$ and therefore the ball $|\bxi| \le \delta_{0}(\alpha,\mu)$
is a subset of $\widetilde\Omega$.
As follows from estimates  \eqref{e1.23} and \eqref{e1.24}, for all  $|\bxi|\le \delta_0(\alpha, \mu)$
the first eigenvalue of the operator $\A(\bxi;\alpha,\mu)$ is located on the interval $[0,d_0/3]$,
while the remaining part of the spectrum belongs to the interval $[d_0,\infty)$.  Thus,
\begin{equation*}
\operatorname{rank}E_{\A(\bxi;\alpha,\mu)}[0,d_{0}/3]=1,\ \
\sigma(\A(\bxi;\alpha,\mu))\cap(d_{0}/3;d_{0})=\varnothing, \quad  |\bxi|\le \delta_0(\alpha, \mu),
\end{equation*}
and we arrive at the following statement:
\begin{proposition}
\label{prop2.1}
Let conditions \eqref{e1.1} and \eqref{e1.2} be satisfied, and assume that $0< \alpha < 2$. Denote $d_0 := \mu_- c_0(d,\alpha) \pi^\alpha$. Then for all $\bxi$ such that $|\bxi|\le\delta_{0}(\alpha,\mu)$ the spectrum of operator
$\A(\bxi) = \A(\bxi;\alpha,\mu)$ on the interval  $[0,d_{0}/3]$ consists of a simple eigenvalue, while the interval
 $(d_{0}/3,d_{0})$ does not contain points of the spectrum of the operator $\A(\bxi;\alpha,\mu)$.
\end{proposition}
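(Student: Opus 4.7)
The plan is to extract Proposition \ref{prop2.1} as a direct corollary of the two-sided spectral bounds \eqref{e1.23}--\eqref{e1.24a} combined with the precise definition of $\delta_0(\alpha,\mu)$ in \eqref{delta0}. The value of $\delta_0$ is reverse-engineered so that the upper estimate on the first eigenvalue drops below $d_0/3$; every other ingredient is already in place, so the argument is essentially a bookkeeping exercise.

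First I would invoke the upper bound in \eqref{e1.23}, namely $\lambda_1(\bxi) \le \mu_+ c_0(d,\alpha) |\bxi|^\alpha$. Inserting $|\bxi| \le \delta_0(\alpha,\mu)$ and using $\delta_0^{\alpha} = \pi^\alpha \mu_-/(3\mu_+)$ yields
\[
\lambda_1(\bxi) \le \mu_+ c_0(d,\alpha) \pi^\alpha \cdot \frac{\mu_-}{3\mu_+} = \frac{\mu_- c_0(d,\alpha)\pi^\alpha}{3} = \frac{d_0}{3}.
\]
Thus $\lambda_1(\bxi) \in [0, d_0/3]$ whenever $|\bxi| \le \delta_0$. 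Second, I would apply the uniform lower bound on the second eigenvalue from \eqref{e1.24}: $\lambda_2(\bxi) \ge d_0$ for every $\bxi \in \wt{\Omega}$. Because the eigenvalues are enumerated in non-decreasing order counting multiplicities, this implies that below $d_0$ the spectrum of $\A(\bxi;\alpha,\mu)$ contains exactly one point, namely the simple eigenvalue $\lambda_1(\bxi)$, and that it is separated from the rest of the spectrum by the gap $[d_0/3, d_0]$. Putting the two observations together gives $\operatorname{rank} E_{\A(\bxi;\alpha,\mu)}[0, d_0/3] = 1$ and $\sigma(\A(\bxi;\alpha,\mu)) \cap (d_0/3, d_0) = \varnothing$, which is exactly the statement to be proved.

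No step is expected to present any real difficulty, since the proposition is just a clean packaging of estimates already in hand. The only sanity check worth making is the inclusion $\{|\bxi| \le \delta_0\} \subset \wt{\Omega}$, which follows from the inequality $\delta_0 < \pi$, i.e.\ from $\mu_- < 3 \mu_+$; this is automatic under hypothesis \eqref{e1.1}.
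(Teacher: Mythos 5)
Your argument is exactly the paper's: plug $|\bxi|\le\delta_0$ into the upper bound of \eqref{e1.23} to get $\lambda_1(\bxi)\le d_0/3$, invoke \eqref{e1.24} for $\lambda_2(\bxi)\ge d_0$, and note $\delta_0<\pi$ to ensure $\{|\bxi|\le\delta_0\}\subset\wt\Omega$. Correct, and no meaningful difference from the paper's proof.
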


\subsection{Threshold approximations}
Denote by $F(\bxi)$ the spectral projection of the operator $\A(\bxi;\alpha,\mu)$ that corresponds to the interval $[0,d_{0}/3]$. Letting $\mathfrak{N}$ be the kernel $\operatorname{Ker}\A(\mathbf{0};\alpha,\mu)=\mathcal{L}\{\mathbf{1}_{\Omega}\}$,
we denote by  $P$ the orthogonal projection onto $\mathfrak{N}$; then $P = (\cdot, \1_\Omega)\1_\Omega$.
Let $\Gamma$ be a contour in the complex plane that passes through the middle point of the interval $(d_{0}/3,d_{0})$
and encloses the segment $[0,d_{0}/3]$ equidistantly.
The length of $\Gamma$ can be easily calculated:
\begin{equation}
\label{dlina}
l_\Gamma = \frac{d_0 (2\pi+2)}{3}.
\end{equation}
By the Riesz formula we have
\begin{align}
\label{e2.6}
F(\bxi)  &= - \frac{1}{2\pi i}\ointop_{\Gamma}(\A(\bxi)-\zeta I)^{-1}\, d\zeta,\ \ |\bxi|\le\delta_{0}(\alpha,\mu),
\\
\label{e2.6a}
\A(\bxi) F(\bxi) &= - \frac{1}{2\pi i}\ointop_{\Gamma}(\A(\bxi)-\zeta I)^{-1}\zeta\,d\zeta,\ \ |\bxi|\le\delta_{0}(\alpha,\mu).
\end{align}
Here we integrate along the contour $\Gamma$ in a counterclockwise direction.

Our goal is to approximate the operators $F(\bxi)$ and $\A(\bxi)F(\bxi)$  for $|\bxi|\le\delta_{0}(\alpha,\mu)$.
We begin by considering the case $0 < \alpha < 1$.
\begin{proposition}
\label{prop2.3}
Let conditions  \eqref{e1.1} and  \eqref{e1.2} be fulfilled,
and assume that $0< \alpha < 1$. Then
\begin{equation}
\label{F-P}
\| F(\bxi) - P \|_{L_2(\Omega) \to L_2(\Omega)} \le C_1(\alpha,\mu) |\bxi|^\alpha, \quad  |\bxi| \le \delta_0(\alpha,\mu)
\end{equation}
with a constant  $C_1(\alpha,\mu)$ that only depends on  $d$,
$\alpha$, $\mu_-$ and $\mu_+$.
\end{proposition}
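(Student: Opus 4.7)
The plan is to start from the Riesz projector formula \eqref{e2.6} and its counterpart for the projector $P$ onto $\operatorname{Ker}\A(\mathbf{0};\alpha,\mu)$, express $F(\bxi)-P$ as a contour integral of the difference of resolvents, and then exploit the operator norm bound \eqref{e2.10} from Lemma \ref{lem2.3}, which is the key place where the regime $0<\alpha<1$ enters.

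First I would observe that by Lemma \ref{lem1.4} together with estimate \eqref{e1.24a}, the spectrum of $\A(\mathbf{0})$ enjoys exactly the same separation property as that of $\A(\bxi)$ for $|\bxi|\le\delta_{0}(\alpha,\mu)$ established in Proposition \ref{prop2.1}: the simple eigenvalue $0$ lies inside $[0,d_0/3]$, while the rest of the spectrum lies in $[d_0,\infty)$. Hence the same contour $\Gamma$ encloses the relevant eigenvalue for both operators, so that
\begin{equation*}
P = -\frac{1}{2\pi i}\ointop_{\Gamma}(\A(\mathbf{0})-\zeta I)^{-1}\,d\zeta.
\end{equation*}
Subtracting this from \eqref{e2.6} and applying the second resolvent identity, which is legitimate since by Lemma \ref{lem2.3} the perturbation $\Delta\A(\bxi)$ is bounded in $L_2(\Omega)$ and $\Dom \A(\bxi)=\Dom \A(\mathbf{0})$, I obtain
\begin{equation*}
F(\bxi)-P = \frac{1}{2\pi i}\ointop_{\Gamma}(\A(\bxi)-\zeta I)^{-1}\,\Delta\A(\bxi)\,(\A(\mathbf{0})-\zeta I)^{-1}\,d\zeta.
\end{equation*}

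Next, I would use the geometry of $\Gamma$: since $\Gamma$ passes through $2d_0/3$ and is equidistant from $[0,d_0/3]$, the contour stays at distance exactly $d_0/3$ from this segment and also from the half-line $[d_0,\infty)$. By Proposition \ref{prop2.1} and the self-adjointness of $\A(\bxi)$, this yields the uniform bound $\|(\A(\bxi)-\zeta I)^{-1}\|_{L_2(\Omega)\to L_2(\Omega)}\le 3/d_0$ for every $\zeta\in\Gamma$ and every $|\bxi|\le\delta_0(\alpha,\mu)$, and the analogous bound for $\A(\mathbf{0})$. Plugging these inequalities together with \eqref{e2.10} and the length formula \eqref{dlina} into the contour integral produces
\begin{equation*}
\|F(\bxi)-P\|_{L_2(\Omega)\to L_2(\Omega)}\le \frac{l_\Gamma}{2\pi}\cdot\frac{9}{d_0^{\,2}}\,\mu_+ c_1(d,\alpha)|\bxi|^\alpha,
\end{equation*}
which is \eqref{F-P} with an explicit constant depending only on $d,\alpha,\mu_-,\mu_+$.

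There is no real obstacle in the argument itself; it is routine operator theory once the ingredients are in place. The substantive work has already been carried out in Lemma \ref{lem2.3}, which is what restricts the result to $0<\alpha<1$: the kernel $K(\x,\y;\bxi)$ in \eqref{e2.11} is only absolutely integrable in this range, and the constant $c_1(d,\alpha)$ blows up as $\alpha\to 1$. For $\alpha\ge 1$ the bound \eqref{e2.10} is no longer available and Lemma \ref{lem2.4} with its logarithmic correction at $\alpha=1$ would have to replace it, which is presumably why the cases $\alpha\in[1,2)$ are treated separately later on.
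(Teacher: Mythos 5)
Your proposal is correct and follows essentially the same route as the paper: Riesz projector formula, second resolvent identity using the boundedness of $\Delta\A(\bxi)$ from Lemma \ref{lem2.3}, the uniform resolvent bound $3/d_0$ on the contour from the spectral gap, and the length formula \eqref{dlina}; your final constant $\frac{l_\Gamma}{2\pi}\cdot\frac{9}{d_0^2}\mu_+c_1(d,\alpha)$ simplifies to exactly the paper's $C_1(\alpha,\mu)=\frac{3(\pi+1)\mu_+c_1(d,\alpha)}{\pi d_0}$. (The only cosmetic caveat: the contour is at distance at least, rather than exactly, $d_0/3$ from $[d_0,\infty)$, but that is all the estimate needs.)
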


\begin{proof}
Denote
\begin{align}
\label{res1}
R(\bxi,\zeta)&:=(\A(\bxi)-\zeta I)^{-1},\ \
|\bxi|\le\delta_{0}(\alpha,\mu),\ \ \zeta\in\Gamma;
\\
\label{res2}
R_{0}(\zeta)&:=R(\mathbf{0},\zeta),\ \ \zeta\in\Gamma.
\end{align}
Due to  formula   \eqref{e2.6} the difference $F(\bxi)-P$ admits the following
integral representation:
\begin{equation}\label{e2.30}
F(\bxi)-P= - \frac{1}{2\pi i}\ointop_{\Gamma} \left( R(\bxi,\zeta) - R_0(\zeta)\right) \, d\zeta,
\ \ |\bxi|\le\delta_{0}(\alpha,\mu).
\end{equation}
According to Lemma \ref{lem2.3}, the operator $\Delta\A(\bxi)$ is bounded if $0< \alpha < 1$. For such operators
the resolvent identity reads
\begin{equation}\label{e2.31}
R(\bxi,\zeta) = R_0(\zeta)
- R(\bxi,\zeta) \Delta\A(\bxi) R_0(\zeta), \quad
|\bxi|\le\delta_{0}(\alpha,\mu),\ \ \zeta\in\Gamma.
\end{equation}
The length of the contour $\Gamma$ is calculated in \eqref{dlina}. The contour $\Gamma$ is constructed so that
both resolvents $R(\bxi,\zeta)$ and $R_0(\zeta)$ satisfy the estimates
\begin{equation}\label{e2.32}
\| R(\bxi,\zeta)\|\le 3d_{0}^{-1},\ \
\| R_0 (\zeta)\|\le 3d_{0}^{-1}, \ \ |\bxi|\le\delta_{0}(\alpha,\mu),\quad \zeta \in \Gamma.
\end{equation}
Finally, from \eqref{e2.10}  and
\eqref{e2.30}--\eqref{e2.32} we deduce \eqref{F-P} with a constant
\begin{equation*}
\label{C1}
C_{1}(\alpha, \mu)= \frac{3(\pi+1)\mu_{+}c_{1}(d,\alpha)}{\pi d_0} =
\frac{3(\pi+1)\mu_{+}c_{1}(d,\alpha)}{\pi^{1+\alpha} \mu_- c_0(d,\alpha)} =: c'_1(d,\alpha) \frac{\mu_+}{\mu_-}.
\end{equation*}
\end{proof}

\begin{proposition}
\label{prop2.5}
Let conditions \eqref{e1.1} and \eqref{e1.2} be fulfilled and assume that  $0< \alpha < 1$.
Then the operator $\Phi(\bxi)$ defined by the relation
\begin{equation}
\label{F= P+ F1}
\A(\bxi) F(\bxi) =   P \Delta \mathbb{A}(\bxi)  P + \Phi(\bxi),
 \end{equation}
is bounded in $L_2(\mathbb R^d)$ and satisfies the estimate
\begin{equation}
\label{F-P-O(xi)}
\left\| \Phi(\bxi) \right\|_{L_2(\Omega) \to L_2(\Omega)} \le C_2(\alpha,\mu) |\bxi|^{2\alpha}, \quad  |\bxi| \le \delta_0(\alpha,\mu);
\end{equation}
the constant $C_2(\alpha,\mu)$ depends only on
$d$, $\alpha$, $\mu_-$ and $\mu_+$.
\end{proposition}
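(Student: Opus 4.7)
The plan is to derive the representation \eqref{F= P+ F1} by iterating the second resolvent identity in the Riesz formula \eqref{e2.6a} for $\A(\bxi)F(\bxi)$, and then to extract the leading contribution by means of a spectral decomposition of the unperturbed resolvent $R_{0}(\zeta)$. Since $0 < \alpha < 1$, Lemma~\ref{lem2.3} guarantees that $\Delta\A(\bxi)$ is a bounded operator with norm of order $|\bxi|^\alpha$, so the resolvent identity
\begin{equation*}
R(\bxi,\zeta) = R_{0}(\zeta) - R_{0}(\zeta)\Delta\A(\bxi) R_{0}(\zeta) + R_{0}(\zeta)\Delta\A(\bxi) R_{0}(\zeta)\Delta\A(\bxi) R(\bxi,\zeta)
\end{equation*}
is valid for $|\bxi| \le \delta_0(\alpha,\mu)$ and $\zeta \in \Gamma$. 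Substituting into \eqref{e2.6a} produces three terms to analyse.

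First I would observe that $-\tfrac{1}{2\pi i}\ointop_\Gamma R_0(\zeta)\zeta\,d\zeta = \A(\mathbf{0})P$, which vanishes because $\mathbf{1}_\Omega \in \operatorname{Ker}\A(\mathbf{0})$ by Lemma~\ref{lem1.4}. Next, the main step is to compute the second term. The key idea is to decompose
\begin{equation*}
R_{0}(\zeta) = -\zeta^{-1} P + R_{0}(\zeta)(I-P),
\end{equation*}
using the fact that $\A(\mathbf{0})P = 0$. By Proposition~\ref{prop2.1} the spectrum of $\A(\mathbf{0})$ restricted to $(I-P)L_{2}(\Omega)$ lies in $[\lambda_{2}(\mathbf{0}),\infty)$, which is disjoint from the region enclosed by $\Gamma$; hence the operator-valued function $\zeta \mapsto R_{0}(\zeta)(I-P)$ is analytic inside $\Gamma$. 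Expanding the product $R_{0}(\zeta)\Delta\A(\bxi) R_{0}(\zeta)$ according to this decomposition yields four summands. After multiplication by $\zeta$ and integration along $\Gamma$, the three summands that contain at least one factor $R_{0}(\zeta)(I-P)$ give rise to integrands that are analytic inside $\Gamma$ and hence integrate to zero. Only the pure rank-one term $\zeta^{-2} P \Delta\A(\bxi) P$ survives, contributing $P \Delta\A(\bxi) P$ via the residue $\tfrac{1}{2\pi i}\ointop_\Gamma \zeta^{-1} d\zeta = 1$.

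Finally, the third (remainder) term is identified with $\Phi(\bxi)$, and the required estimate \eqref{F-P-O(xi)} is obtained by plugging in the uniform resolvent bound $\|R_0(\zeta)\|, \|R(\bxi,\zeta)\| \le 3 d_0^{-1}$ from \eqref{e2.32}, the norm estimate $\|\Delta\A(\bxi)\| \le \mu_+ c_1(d,\alpha)|\bxi|^\alpha$ from \eqref{e2.10}, and the obvious bounds on $|\zeta|$ and on the length $l_\Gamma$ of the contour given in \eqref{dlina}. This yields $\|\Phi(\bxi)\| \le C_{2}(\alpha,\mu)|\bxi|^{2\alpha}$ with a constant depending only on $d$, $\alpha$, $\mu_\pm$.

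The main obstacle is the bookkeeping in the middle step: one must recognise that the spectral decomposition of $R_{0}(\zeta)$ into a rank-one meromorphic part $-\zeta^{-1}P$ and a holomorphic part $R_{0}(\zeta)(I-P)$ collapses the double integral to exactly the effective form $P\Delta\A(\bxi)P$, and justify the vanishing of the remaining three summands by analyticity inside $\Gamma$ rather than by direct estimation. Once this algebraic structure is in place, the remainder estimate is a routine application of the Shur-type bound from Lemma~\ref{lem2.3} combined with \eqref{e2.32}.
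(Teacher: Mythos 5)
Your proposal is correct and follows essentially the same route as the paper: iterate the resolvent identity once, insert into the Riesz formula \eqref{e2.6a}, kill the zeroth-order term via $\A(\mathbf{0})P=0$, reduce the first-order contour integral to $P\Delta\A(\bxi)P$ by decomposing $R_0(\zeta)=-\zeta^{-1}P+R_0(\zeta)(I-P)$ with the second part holomorphic inside $\Gamma$, and bound the second-order remainder by combining \eqref{e2.10} and \eqref{e2.32} with the contour length \eqref{dlina}. The only cosmetic difference is that your iterated identity places $R(\bxi,\zeta)$ on the right of the remainder, whereas the paper iterates \eqref{e2.31} and places it on the left; both give identical norm bounds.
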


\begin{proof}
Iterating the resolvent identity from  \eqref{e2.31}, we arrive at the representation
\begin{equation}
\label{res_iden2}
R(\bxi,\zeta) = R_0(\zeta)
- R_0(\zeta) \Delta\A(\bxi) R_0(\zeta) + R(\bxi,\zeta) \Delta\A(\bxi) R_0(\zeta) \Delta\A(\bxi) R_0(\zeta), \quad
|\bxi|\le\delta_{0}(\alpha,\mu),\ \ \zeta\in\Gamma.
\end{equation}
Thanks to \eqref{e2.10} and \eqref{e2.32} the operator $Z_1(\bxi,\zeta) := R(\bxi,\zeta) \Delta\A(\bxi) R_0(\zeta) \Delta\A(\bxi) R_0(\zeta)$ can be estimated as follows:
\begin{equation}
\label{res_est1}
\| Z_1(\bxi,\zeta) \| \le (3 d_0^{-1})^3 \mu^2_+ c_1(d,\alpha)^2 |\bxi|^{2\alpha},
\quad
|\bxi|\le\delta_{0}(\alpha,\mu),\ \ \zeta\in\Gamma.
\end{equation}
Using the Riesz formula from  \eqref{e2.6a} and representation \eqref{res_iden2}, we obtain
\begin{equation*}
\label{AF=}
\A(\bxi)F(\bxi) = G_0  + G_1(\bxi) + \Phi(\bxi), \quad |\bxi|\le\delta_{0}(\alpha,\mu),
\end{equation*}
with
\begin{align}
\label{G0=}
G_0 &= - \frac{1}{2\pi i}\oint_{\Gamma} R_0(\zeta) \zeta  \, d\zeta,
\\
\label{G1=}
G_1(\bxi) &=  \frac{1}{2\pi i}\oint_{\Gamma} R_0(\zeta) \Delta \A(\bxi) R_0(\zeta) \zeta  \, d\zeta,
\\
\label{Phi=}
\Phi(\bxi) &= - \frac{1}{2\pi i}\oint_{\Gamma} Z_1(\bxi,\zeta)  \zeta  \, d\zeta.
\end{align}
Letting $\bxi=0$ in  \eqref{e2.6a} we conclude that $G_0 = \A(\mathbf{0}) P =0$.
In view of \eqref{Phi=},  estimate \eqref{F-P-O(xi)}  follows from  \eqref{res_est1},  \eqref{dlina}
and the inequality $|\zeta| \le 2d_0/3$ which holds for all $\zeta \in \Gamma$. The straightforward computation shows that
\begin{equation*}
\label{C2}
C_{2}(\alpha,\mu):=\frac{6(\pi+1) \mu_{+}^2 c_{1}(d,\alpha)^2}{\pi d_0}
=\frac{6(\pi+1) \mu_{+}^2 c_{1}(d,\alpha)^2}{\pi^{1+\alpha} \mu_-c_0(d,\alpha)}.
\end{equation*}
The calculation of the integral in \eqref{G1=} relies on the following representation for the resolvent of the operator $\mathbb{A}(\mathbf{0})$:
\begin{equation}\label{4.10}
R_{0}(\zeta)=R_{0}(\zeta)P+R_{0}(\zeta)P^{\bot}=-\frac{1}{\zeta}P+R_{0}(\zeta)P^{\bot},\ \ \zeta\in\Gamma.
\end{equation}
Substituting  the expression on the right-hand side of   \eqref{4.10} for $R_{0}(\zeta)$ in \eqref {G1=} yields
\begin{equation*}
G_{1}(\bxi) =  \frac{1}{2\pi i}\oint_{\Gamma} \Bigl( -\frac{1}{\zeta}P+R_{0}^{\bot}(\zeta)\Bigr)
\Delta \mathbb{A}(\bxi)
\Bigl(-\frac{1}{\zeta}P+R_{0}^{\bot}(\zeta)\Bigr) \zeta \,d\zeta
=  \frac{1}{2\pi i}\oint_{\Gamma} \frac{1}{\zeta}  P \Delta \mathbb{A}(\bxi)  P \,d\zeta =  P \Delta \mathbb{A}(\bxi)  P,
  \end{equation*}
  here we have also used the fact that the operator-function $R_{0}^{\bot}(\zeta):=R_{0}(\zeta)P^{\bot}$ is holomorphic
  inside the contour $\Gamma$.  This implies the desired representation  \eqref{F= P+ F1}--\eqref{F-P-O(xi)}.
\end{proof}

Now we turn to the case $1 \le  \alpha < 2$.
\begin{proposition}
\label{prop2.3a}
Let conditions  \eqref{e1.1} and \eqref{e1.2} be fulfilled,
and assume that $1 \le  \alpha < 2$.  Then the following estimate holds:
\begin{equation}
\label{F-P_a}
\| F(\bxi) - P \|_{L_2(\Omega) \to L_2(\Omega)} \le C_1(\alpha,\mu) \Theta(\bxi), \quad  |\bxi| \le \delta_0(\alpha,\mu);
\end{equation}
here $\Theta(\bxi)$  is defined in  \eqref{*.00},
and the constant $C_1(\alpha,\mu)$  depends only on $d$, $\alpha$, $\mu_-$, $\mu_+$.
\end{proposition}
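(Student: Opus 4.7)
The strategy is to follow the same Riesz-contour approach as in Proposition \ref{prop2.3}: start from
$F(\bxi) - P = -\frac{1}{2\pi i}\oint_\Gamma(R(\bxi,\zeta)-R_0(\zeta))\,d\zeta$ and estimate the resolvent difference uniformly in $\zeta \in \Gamma$. The difficulty is that for $\alpha \ge 1$ the difference $\A(\bxi) - \A(\mathbf{0})$ is no longer bounded on $L_2(\Omega)$ (the constant $c_1(d,\alpha)$ from Lemma \ref{lem2.3} blows up as $\alpha \to 1$), so the operator-level resolvent identity \eqref{e2.31} is not available. Instead, I work at the level of sesquilinear forms, using Lemma \ref{lem2.4}. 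That lemma gives a relative form bound for the Hermitian sesquilinear form $b(\bxi)[u,v] := a(\bxi)[u,v] - a(\mathbf{0})[u,v]$, and standard polarization of Hermitian sesquilinear forms upgrades it to
$$
|b(\bxi)[u,v]| \le C\,\Theta(\bxi) \bigl(a(\mathbf{0})[u,u]+\mu_+\|u\|_{L_2(\Omega)}^2\bigr)^{1/2}\bigl(a(\mathbf{0})[v,v]+\mu_+\|v\|_{L_2(\Omega)}^2\bigr)^{1/2},
\qquad u,v \in \wt{H}^\gamma(\Omega).
$$

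The central step is the form-sense identity
$$
((R(\bxi,\zeta) - R_0(\zeta))f,\, g)_{L_2(\Omega)} = - b(\bxi)\bigl[R_0(\zeta)f,\; R(\bxi,\bar\zeta)g\bigr], \qquad f,g\in L_2(\Omega),\ \zeta\in\Gamma,
$$
which plays the role of the form-sense analogue of \eqref{e2.31}. It is obtained by a short direct calculation that uses $((\A(\mathbf{0}) - \zeta)w, v) = a(\mathbf{0})[w,v] - \zeta(w,v)$ for $w\in \Dom \A(\mathbf{0})$, $v\in \wt{H}^\gamma(\Omega)$, and the analogous identity for $\A(\bxi)$; crucially, although $\Dom \A(\bxi)$ and $\Dom \A(\mathbf{0})$ may differ, both $R_0(\zeta)f$ and $R(\bxi,\bar\zeta)g$ lie in the common form domain $\wt{H}^\gamma(\Omega)$, which is all that $b(\bxi)$ requires. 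Substituting $u = R_0(\zeta)f$, $v = R(\bxi,\bar\zeta)g$ into the polarized form estimate reduces matters to controlling two weighted norms. The first, $\|(\A(\mathbf{0})+\mu_+ I)^{1/2}R_0(\zeta)\|$, is bounded uniformly in $\zeta\in\Gamma$ by functional calculus, since Proposition \ref{prop2.1} guarantees that $\sigma(\A(\mathbf{0}))$ stays at distance $\ge d_0/3$ from $\Gamma$. For the second, I shrink $\delta_0(\alpha,\mu)$ if necessary so that $\check{c}(d,\alpha)\Theta(\bxi)\le 1/2$; Lemma \ref{lem2.4} then gives
$$
a(\mathbf{0})[v,v]+\mu_+\|v\|_{L_2(\Omega)}^2 \le 2\bigl(a(\bxi)[v,v]+\mu_+\|v\|_{L_2(\Omega)}^2\bigr) = 2\bigl\|(\A(\bxi)+\mu_+ I)^{1/2}v\bigr\|_{L_2(\Omega)}^2,
$$
after which functional calculus applied to $\A(\bxi)$, whose spectrum is also separated from $\Gamma$ by at least $d_0/3$ uniformly in $|\bxi|\le \delta_0$, yields $\|(\A(\bxi)+\mu_+ I)^{1/2}R(\bxi,\bar\zeta)\|\le C(\alpha,\mu)$ uniformly in $|\bxi|\le \delta_0$ and $\zeta\in\Gamma$.

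Combining these bounds gives $\|R(\bxi,\zeta) - R_0(\zeta)\|_{L_2(\Omega)\to L_2(\Omega)} \le C(\alpha,\mu)\Theta(\bxi)$ uniformly in $\zeta \in \Gamma$, and the Riesz formula together with \eqref{dlina} delivers \eqref{F-P_a}. The main obstacle is the rigorous derivation of the form-sense resolvent identity: because $(\A(\bxi) - \A(\mathbf{0}))w$ cannot be treated as an $L_2$-element when $\alpha\ge 1$, the entire manipulation must be carried out at the level of sesquilinear forms on the common domain $\wt{H}^\gamma(\Omega)$. Once this identity is in place, the remainder of the argument is a routine adaptation of the proof of Proposition \ref{prop2.3}, with Lemma \ref{lem2.4} and functional calculus taking over the roles played there by Lemma \ref{lem2.3} and the bounded resolvent identity.
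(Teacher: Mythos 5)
Your proof is correct and follows essentially the same route as the paper's: the paper also works at the form level on the Hilbert space $\mathfrak{D}=\wt{H}^\gamma(\Omega)$ with inner product $a(\mathbf{0})[u,v]+\mu_+(u,v)_{L_2(\Omega)}$, introduces the bounded self-adjoint operator ${\mathbb T}(\bxi)$ on $\mathfrak{D}$ representing $a(\bxi)-a(\mathbf{0})$ (which is exactly your polarization step), invokes the form-perturbation resolvent identity from \cite[Ch.~1, \S\,2]{BSu1} --- the operator-level rendering of your identity $((R(\bxi,\zeta)-R_0(\zeta))f,g)=-b(\bxi)[R_0(\zeta)f,R(\bxi,\bar\zeta)g]$ --- and then bounds the same two weighted norms $\|R_0(\zeta)\|_{L_2(\Omega)\to\mathfrak{D}}$ and $\|R(\bxi,\zeta)\|_{L_2(\Omega)\to\mathfrak{D}}$. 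The one substantive deviation is that you shrink $\delta_0$ so that $\check{c}(d,\alpha)\Theta(\bxi)\le \frac{1}{2}$ before comparing the $\mathfrak{D}$-norm with the $\A(\bxi)$-weighted norm, whereas the paper gets the uniform comparison constant $\beta_0^2(d,\alpha)$ in \eqref{L2.6_9} directly from the $b_1(\bxi)$ bound \eqref{*.5}, valid over the whole dual cell with no shrinking; if you keep the shrinking, you should add the (trivial but necessary) remark that on the remaining annulus the claimed estimate follows from $\|F(\bxi)-P\|\le 2$ and the positive lower bound on $\Theta$ there, so that \eqref{F-P_a} indeed holds on all of $|\bxi|\le\delta_0(\alpha,\mu)$ as stated.
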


\begin{proof}
If $\alpha\ge 1$, we cannot use any more relation  \eqref{e2.31} because the operator $\Delta\A(\bxi) $ need not be bounded.
Instead, we exploit the resolvent identity for operators generated by closed non-negative forms with a common domain,
see \cite[Ch. 1, \S 2]{BSu1}.
Denote by $\mathfrak{D}$ the Hilbert space $\Dom a(\mathbf{0}) = \wt{H}^\gamma(\Omega)$
equipped with the inner product
\begin{equation*}
\label{inner}
(u,v)_{\mathfrak D} :=   a(\mathbf{0})[u,v] + \mu_+ (u,v)_{L_2(\Omega)},\quad u,v \in \wt{H}^\gamma(\Omega).
\end{equation*}
Clearly, the following lower bound holds:
\begin{equation}
\label{L2.6_1}
\|u\|_{L_2(\Omega)} \le \mu_+^{-1/2}\| u \|_{\mathfrak D},\quad u \in \mathfrak{D}.
\end{equation}
The form  $a(\bxi) - a(\mathbf{0})$ is continuous in $\mathfrak D$, and thus generates a continuous self-adjoint
operator ${\mathbb T}(\bxi)$ in  $\mathfrak D$.  Then
\begin{align}
\label{L2.6_2}
a(\bxi)[u,v] - a(\mathbf{0})[u,v] = ( {\mathbb T}(\bxi) u, v)_{\mathfrak D}, \quad u,v \in {\mathfrak D},
\\
\nonumber
\|{\mathbb T}(\bxi)\|_{{\mathfrak D} \to {\mathfrak D}} = \sup_{0 \ne u \in {\mathfrak D}}
\frac{| a(\bxi)[u,u] - a(\mathbf{0})[u,u] | }{\|u\|^2_{\mathfrak D}},
\end{align}
and, by Lemma \ref{lem2.4},
\begin{equation}
\label{L2.6_4}
\|{\mathbb T}(\bxi)\|_{{\mathfrak D} \to {\mathfrak D}} \le \check{c}(d,\alpha) \Theta(\bxi).
\end{equation}
Combining this estimate with  \eqref{L2.6_1} yields
\begin{equation}
\label{L2.6_5}
\|{\mathbb T}(\bxi)\|_{{\mathfrak D} \to L_2(\Omega)} \le \mu_+^{-1/2} \check{c}(d,\alpha) \Theta(\bxi).
\end{equation}
For the resolvents of  the operators $\A(\bxi)$ and $\A(\mathbf{0})$ we keep the notation introduced in \eqref{res1} and \eqref{res2}.
According to \cite[Ch. 1, \S 2]{BSu1}, the following resolvent identity holds:
\begin{equation}
\label{L2.6_6}
R(\bxi,\zeta) - R_0(\zeta) = - \Upsilon(\zeta) {\mathbb T}(\bxi) R(\bxi,\zeta)
\end{equation}
with
\begin{equation}
\label{L2.6_7}
\Upsilon(\zeta) := I + (\zeta + \mu_+) R_0(\zeta).
\end{equation}
In order to estimate the difference of the resolvents we need an upper bound for the quantity
\begin{equation}
\label{L2.6_8}
\beta^2(\bxi) :=  \sup_{0 \ne u \in {\mathfrak D}}
\frac{ a(\mathbf{0})[u,u] + \mu_+ \|u\|^2_{L_2(\Omega)} }{a(\bxi)[u,u] + \mu_+\|u\|^2_{L_2(\Omega)}}.
\end{equation}
Recalling the definition of $ b_1(\bxi)$ in  \eqref{*.3} we have
$$
\begin{aligned}
& a(\mathbf{0})[u,u] = \frac{1}{2} \intop_{\R^d} d\y \intop_\Omega d\x \, \mu(\x,\y)
 \frac{ |u(\x) (e^{i\langle \bxi, \y-\x\rangle}-1) + (u(\x) - e^{i\langle \bxi, \y-\x\rangle} u(\y))|^2 }{|\x - \y|^{d+\alpha}}
\\
& \le  \intop_{\R^d} d\y \intop_\Omega d\x\, \mu(\x,\y)
 \frac{ |u(\x)|^2 |e^{i\langle \bxi, \y-\x\rangle}-1|^2}{|\x - \y|^{d+\alpha}}
+ \intop_{\R^d} d\y \intop_\Omega d\x \,\mu(\x,\y)
 \frac{ |u(\x) - e^{i\langle \bxi, \y-\x\rangle} u(\y)|^2 }{|\x - \y|^{d+\alpha}}
 \\
 & = 2 b_1(\bxi)[u,u] + 2 a(\bxi)[u,u],
\end{aligned}
$$
From this inequality, considering estimate \eqref{*.5} we obtain
$$
\begin{aligned}
 a(\mathbf{0})[u,u] + \mu_+ \|u\|^2_{L_2(\Omega)}  &\le
2 a(\bxi)[u,u] + \mu_+ (1+ 2 c_0(d,\alpha) |\bxi|^\alpha)\|u\|^2_{L_2(\Omega)}
\\
& \le  \max \{2, 1+ 2 c_0(d,\alpha) \pi^\alpha d^{\frac\alpha2} \}
( a(\bxi)[u,u] + \mu_+ \|u\|^2_{L_2(\Omega)} );
\end{aligned}
$$
here we also took into account the inequality $|\bxi| \le \pi \sqrt{d}$ that holds for all $\bxi \in \wt{\Omega}$.
Therefore,
 \begin{equation}
\label{L2.6_9}
\beta^2(\bxi) \le \beta_0^2(d,\alpha) := \max \{2, 1+ 2 c_0(d,\alpha) \pi^\alpha d^{\frac\alpha2} \}.
\end{equation}
As follows from \eqref{L2.6_6},
\begin{equation}
\label{L2.6_10}
\| R(\bxi,\zeta) - R_0(\zeta)\|_{L_2(\Omega) \to L_2(\Omega)} \le
\|  \Upsilon(\zeta) \|_{L_2(\Omega) \to L_2(\Omega)} \| {\mathbb T}(\bxi)\|_{\mathfrak{D} \to L_2(\Omega)}
\| R(\bxi,\zeta)\|_{L_2(\Omega) \to \mathfrak{D}}.
\end{equation}
From  \eqref{e2.32}, \eqref{L2.6_7} and the estimate $|\zeta| \le 2d_0/3$ that holds for  all $\zeta \in \Gamma$ one has
\begin{equation}
\label{L2.6_11}
\|  \Upsilon(\zeta) \|_{L_2(\Omega) \to L_2(\Omega)} \le 1 + |\zeta + \mu_+| \|R_0(\zeta)\|_{L_2(\Omega) \to L_2(\Omega)} \le 3 + 3 \mu_+ d_0^{-1}, \quad \zeta \in \Gamma.
\end{equation}
Next, we use an elementary inequality
\begin{equation}
\label{L2.6_11a}
\begin{aligned}
&\| R(\bxi,\zeta)\|_{L_2(\Omega) \to \mathfrak{D}} = \| (\A(\mathbf{0}) + \mu_+ I)^{1/2}
R(\bxi,\zeta)\|_{L_2(\Omega) \to L_2(\Omega)}
\\
&\le
\| (\A(\mathbf{0}) + \mu_+ I)^{1/2} (\A(\bxi) + \mu_+ I)^{-1/2}\|_{L_2(\Omega) \to L_2(\Omega)}
\|  (\A(\bxi) + \mu_+ I)^{1/2} R(\bxi,\zeta)\|_{L_2(\Omega) \to L_2(\Omega)}.
\end{aligned}
\end{equation}
The first term on the right-hand side here is equal to $\beta(\bxi)$, see \eqref{L2.6_8}.
Recalling the Hilbert identity
\begin{equation}
\label{L2.6_11b}
R(\bxi,\zeta) =  (\A(\bxi) + \mu_+ I)^{-1}  (I + (\mu_+ + \zeta)R(\bxi,\zeta))
\end{equation}
we can estimate the second term as follows:
\begin{equation}
\label{L2.6_11c}
\begin{aligned}
\| & (\A(\bxi) + \mu_+ I)^{1/2} R(\bxi,\zeta)\|_{L_2(\Omega) \to L_2(\Omega)} = \|  (\A(\bxi) + \mu_+ I)^{-1/2}
 (I + (\mu_+ + \zeta)R(\bxi,\zeta))\|_{L_2(\Omega) \to L_2(\Omega)}
 \\
 &\le \mu_+^{-1/2} (1 + (\mu_+ + 2d_0/3) 3 d_0^{-1}) =  \mu_+^{-1/2} (3 + 3 \mu_+ d_0^{-1}), \quad \zeta \in \Gamma,
 \quad  |\bxi| \le \delta_0(\alpha,\mu);
 \end{aligned}
\end{equation}
we have also used here \eqref{e2.32} and the fact that $|\zeta| \le 2d_0/3$ for $\zeta \in \Gamma$.
Finally, considering \eqref{L2.6_9} we conclude that
\begin{equation}
\label{L2.6_12}
\| R(\bxi,\zeta)\|_{L_2(\Omega) \to \mathfrak{D}} \le \beta_0(d,\alpha) \mu_+^{-1/2} (3 + 3 \mu_+ d_0^{-1}),
\quad \zeta \in \Gamma, \quad  |\bxi| \le \delta_0(\alpha,\mu).
\end{equation}

Combining \eqref{L2.6_5}, \eqref{L2.6_10}, \eqref{L2.6_11} and \eqref{L2.6_12} leads to the upper bound
\begin{equation}
\label{L2.6_13}
\| R(\bxi,\zeta) - R_0(\zeta)\|_{L_2(\Omega) \to L_2(\Omega)} \le
\mu_+^{-1} \check{c}(d,\alpha)\beta_0(d,\alpha)(3 + 3 \mu_+ d_0^{-1})^2 \Theta(\bxi),
\quad \zeta \in \Gamma,\quad  |\bxi| \le \delta_0(\alpha,\mu).
\end{equation}
As a consequence of this upper bound, representation \eqref{e2.30} and formula  \eqref{dlina} one has
$$
\| F(\bxi) - P\|_{L_2(\Omega) \to L_2(\Omega)} \le \frac{(\pi+1)
  \check{c}(d,\alpha)\beta_0(d,\alpha) d_0 (3 + 3 \mu_+ d_0^{-1})^2}{ 3\pi \mu_+} \Theta(\bxi),
 \quad  |\bxi| \le \delta_0(\alpha,\mu).
$$
This yields the desired estimate \eqref{F-P_a}.
\end{proof}

\begin{proposition}
\label{prop2.5_2}
Let conditions \eqref{e1.1}, \eqref{e1.2} be fulfilled, and assume that $1 \le  \alpha < 2$.
Then for  $|\bxi| \le \delta_0(\alpha,\mu)$ the following representation holds:
\begin{equation}
\label{F= P+ F1_2}
\A(\bxi) F(\bxi) =   \mu_+ P  \mathbb{T}(\bxi)  P + \wt{\Phi}(\bxi),
 \end{equation}
 where $\wt{\Phi}(\bxi)$ is a bounded operator in $L_2(\Omega)$ that satisfies the estimate
\begin{equation}
\label{F-P-O(xi)_2}
\bigl\| \wt{\Phi}(\bxi) \bigr\|_{L_2(\Omega) \to L_2(\Omega)} \le C_2(\alpha,\mu) \Theta(\bxi)^2, \quad  |\bxi| \le \delta_0(\alpha,\mu),
\end{equation}
and $\Theta(\bxi)$ is defined in  \eqref{*.00}.
The constant $C_2(\alpha,\mu)$ depends only on  $d$, $\alpha$, $\mu_-$, $\mu_+$.
\end{proposition}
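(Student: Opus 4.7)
The plan is to mirror the proof of Proposition \ref{prop2.5} but with the bounded-perturbation identity \eqref{e2.31} replaced by the form-based resolvent identity \eqref{L2.6_6}. Iterating \eqref{L2.6_6} once, I would write
\begin{equation*}
R(\bxi,\zeta) = R_0(\zeta) - \Upsilon(\zeta)\mathbb{T}(\bxi) R_0(\zeta) + Z_2(\bxi,\zeta),\qquad
Z_2(\bxi,\zeta):= \Upsilon(\zeta)\mathbb{T}(\bxi)\,\Upsilon(\zeta)\mathbb{T}(\bxi) R(\bxi,\zeta),
\end{equation*}
and plug this into the Riesz formula \eqref{e2.6a}, getting $\A(\bxi) F(\bxi)=G_0+G_1(\bxi)+\wt{\Phi}(\bxi)$, where $G_0 = \A(\mathbf{0})P=0$ exactly as in \eqref{G0=}, where $G_1(\bxi) := (2\pi i)^{-1}\oint_\Gamma \Upsilon(\zeta)\mathbb{T}(\bxi) R_0(\zeta)\,\zeta\,d\zeta$, and where $\wt{\Phi}(\bxi) := -(2\pi i)^{-1}\oint_\Gamma Z_2(\bxi,\zeta)\,\zeta\,d\zeta$.

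The next step is to identify $G_1(\bxi)$ with $\mu_+ P\mathbb{T}(\bxi) P$. I would use the decomposition $R_0(\zeta)=-\zeta^{-1}P+R_0(\zeta)P^\perp$ from \eqref{4.10}; since $P$ commutes with $R_0(\zeta)$ and hence with $\Upsilon(\zeta)$, a direct computation gives $P\Upsilon(\zeta)=\Upsilon(\zeta)P=-\mu_+\zeta^{-1}P$ while $\Upsilon(\zeta)P^\perp=P^\perp+(\zeta+\mu_+)R_0(\zeta)P^\perp$ is holomorphic in the interior of $\Gamma$. Inserting $I=P+P^\perp$ on the left of $\mathbb{T}(\bxi)$ and expanding $\Upsilon(\zeta)\mathbb{T}(\bxi) R_0(\zeta)\,\zeta$, all terms except the one proportional to $\zeta^{-1}P\mathbb{T}(\bxi) P$ are holomorphic inside $\Gamma$. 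The residue theorem (using $(2\pi i)^{-1}\oint_\Gamma \zeta^{-1}d\zeta=1$) would then deliver $G_1(\bxi)=\mu_+ P\mathbb{T}(\bxi) P$, which is the principal term of \eqref{F= P+ F1_2}.

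For the remainder $\wt{\Phi}(\bxi)$, I would bound $\|Z_2(\bxi,\zeta)\|_{L_2\to L_2}$ uniformly in $\zeta\in\Gamma$ by the composition chain
\begin{equation*}
\|\Upsilon(\zeta)\|_{L_2\to L_2}\,\|\mathbb{T}(\bxi)\|_{\mathfrak{D}\to L_2}\,\|\Upsilon(\zeta)\|_{\mathfrak{D}\to\mathfrak{D}}\,\|\mathbb{T}(\bxi)\|_{\mathfrak{D}\to\mathfrak{D}}\,\|R(\bxi,\zeta)\|_{L_2\to\mathfrak{D}},
\end{equation*}
where the first and last factors are already controlled by \eqref{L2.6_11} and \eqref{L2.6_12}, the two $\mathbb{T}$ factors contribute $\Theta(\bxi)^2$ via \eqref{L2.6_4}--\eqref{L2.6_5}, and the middle factor is bounded uniformly in $\zeta\in\Gamma$ via the identity $\Upsilon(\zeta)=(\A(\mathbf{0})+\mu_+ I)R_0(\zeta)$, which exhibits $\Upsilon(\zeta)$ as a bounded function of $\A(\mathbf{0})$ and hence commuting with $(\A(\mathbf{0})+\mu_+ I)^{1/2}$, the operator that defines the $\mathfrak{D}$-norm. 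Combining this with the contour length \eqref{dlina} and the bound $|\zeta|\le 2d_0/3$ on $\Gamma$ would then produce the quantitative estimate \eqref{F-P-O(xi)_2}.

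The principal technical obstacle is the bookkeeping of the operators in $Z_2(\bxi,\zeta)$ across the pair of spaces $(L_2(\Omega),\mathfrak{D})$: I must verify at each link of the chain that the operator is applied to elements of the appropriate regularity, so that one factor of $\mathbb{T}(\bxi)$ can be exploited as a map $\mathfrak{D}\to L_2$ (with norm $\mu_+^{-1/2}\check{c}\Theta(\bxi)$) while the other is used as a map $\mathfrak{D}\to\mathfrak{D}$ (with norm $\check{c}\Theta(\bxi)$), and that the intervening $\Upsilon(\zeta)$ is uniformly bounded on $\mathfrak{D}$. Once the functional-analytic setup is in place, the remaining steps reduce to the same residue arithmetic as in the proof of Proposition \ref{prop2.5}.
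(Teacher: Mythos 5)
Your proposal is correct and follows essentially the same route as the paper: the same iterated form-based resolvent identity, the same insertion into the Riesz formula \eqref{e2.6a}, the same identification $G_0=\A(\mathbf{0})P=0$, and the same residue computation (via \eqref{4.10} and $P\Upsilon(\zeta)=-\mu_+\zeta^{-1}P$) yielding $G_1(\bxi)=\mu_+P\,\mathbb{T}(\bxi)\,P$. The one place where you deviate is in estimating the remainder term $Z_2(\bxi,\zeta)=\Upsilon(\zeta)\mathbb{T}(\bxi)\Upsilon(\zeta)\mathbb{T}(\bxi)R(\bxi,\zeta)$: the paper expands the inner $\Upsilon(\zeta)=I+(\zeta+\mu_+)R_0(\zeta)$ and estimates the two resulting terms separately, thereby never needing a $\mathfrak{D}\to\mathfrak{D}$ bound on $\Upsilon(\zeta)$, whereas you keep the single chain and bound $\|\Upsilon(\zeta)\|_{\mathfrak{D}\to\mathfrak{D}}$ directly by noting that $\Upsilon(\zeta)=(\A(\mathbf{0})+\mu_+I)R_0(\zeta)$ is a bounded Borel function of $\A(\mathbf{0})$, hence commutes with the isometry $(\A(\mathbf{0})+\mu_+I)^{1/2}:\mathfrak{D}\to L_2$ and has the same operator norm on $\mathfrak{D}$ as on $L_2(\Omega)$. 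This is a legitimate and slightly cleaner shortcut; both versions deliver the same $O(\Theta(\bxi)^2)$ bound with comparable constants, so the difference is cosmetic rather than structural.
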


\begin{proof}
An iterated resolvent identity \eqref{L2.6_6} reads
\begin{equation}
\label{L3.5_1}
R(\bxi,\zeta) = R_0(\zeta)  - \Upsilon(\zeta) {\mathbb T}(\bxi) R_0(\zeta) +
  \Upsilon(\zeta) {\mathbb T}(\bxi)  \Upsilon(\zeta) {\mathbb T}(\bxi) R(\bxi,\zeta).
\end{equation}
Let us first estimate the operator
$\wt{Z}_1(\bxi,\zeta) :=\Upsilon(\zeta) {\mathbb T}(\bxi)  \Upsilon(\zeta) {\mathbb T}(\bxi) R(\bxi,\zeta)$.
Considering  \eqref{L2.6_7} we obtain
\begin{equation*}
\label{L3.5_2}
\wt{Z}_1(\bxi,\zeta)  =
  \Upsilon(\zeta) {\mathbb T}(\bxi)^2  R(\bxi,\zeta) + (\zeta + \mu_+)  \Upsilon(\zeta) {\mathbb T}(\bxi) R_0(\zeta)
   {\mathbb T}(\bxi)  R(\bxi,\zeta).
\end{equation*}
Therefore,
\begin{equation}
\label{L3.5_3}
\begin{aligned}
\| \wt{Z}_1(\bxi,\zeta) \|_{L_2(\Omega) \to L_2(\Omega)} &\le
 \| \Upsilon(\zeta)\|_{L_2(\Omega) \to L_2(\Omega)} \| {\mathbb T}(\bxi)\|_{\mathfrak{D} \to L_2(\Omega)}
 \| R(\bxi,\zeta) \|_{L_2(\Omega) \to \mathfrak{D}}
 \\
 & \times \left( \| {\mathbb T}(\bxi)\|_{\mathfrak{D} \to {\mathfrak D}} + |\zeta + \mu_+|
  \| {\mathbb T}(\bxi)\|_{\mathfrak{D} \to L_2(\Omega)} \| R_0(\zeta) \|_{L_2(\Omega) \to \mathfrak{D}} \right).
 \end{aligned}
\end{equation}
In the same way as in \eqref{L2.6_11a}--\eqref{L2.6_11c} we derive the relation
\begin{equation}
\label{L3.5_4}
\begin{aligned}
& \| R_0(\zeta) \|_{L_2(\Omega) \to \mathfrak{D}} = \| (\A(\mathbf{0}) + \mu_+ I)^{1/2}R_0(\zeta) \|_{L_2(\Omega) \to L_2(\Omega)}
\\
&= \| (\A(\mathbf{0}) + \mu_+ I)^{-1/2} (I + (\mu_+ + \zeta) R_0(\zeta)) \|_{L_2(\Omega) \to L_2(\Omega)}
\le \mu_+^{-1/2} (3+3 \mu_+ d_0^{-1} ).
\end{aligned}
\end{equation}
Combining the inequalities in  \eqref{L2.6_4}, \eqref{L2.6_5}, \eqref{L2.6_11}, \eqref{L2.6_12}, \eqref{L3.5_3} and  \eqref{L3.5_4} we conclude that
\begin{equation}
\label{L3.5_5}
\| \wt{Z}_1(\bxi,\zeta) \|_{L_2(\Omega) \to L_2(\Omega)} \le \wt{C}_2(\alpha,\mu) \Theta(\bxi)^2, \quad \zeta \in \Gamma,
\quad |\bxi| \le \delta_0(\alpha,\mu),
\end{equation}
with a constant
$$
\wt{C}_2(\alpha,\mu) = \check{c}(d,\alpha)^2 \beta_0(d,\alpha) \mu_+^{-1} (3+ 3 \mu_+ d_0^{-1} )^2
\left(1 + (\mu_+ + 2d_0/3)  \mu_+^{-1} (3+ 3 \mu_+ d_0^{-1} )\right).
$$
Substituting representation \eqref{L3.5_1} in the Riesz formula in \eqref{e2.6a} yields
\begin{equation*}
\label{AF==}
\A(\bxi)F(\bxi) = G_0  + \wt{G}_1(\bxi) + \wt{\Phi}(\bxi), \quad |\bxi|\le\delta_{0}(\alpha,\mu),
\end{equation*}
where $G_0$ is defined in  \eqref{G0=} and two other terms on the right-hand side are given by
\begin{align}
\label{G1==}
\wt{G}_1(\bxi) &=  \frac{1}{2\pi i}\oint_{\Gamma} \Upsilon(\zeta) {\mathbb T}(\bxi) R_0(\zeta) \zeta  \, d\zeta,
\\
\label{Phi==}
\wt{\Phi}(\bxi) &= - \frac{1}{2\pi i}\oint_{\Gamma} \wt{Z}_1(\bxi,\zeta)  \zeta  \, d\zeta.
\end{align}
It has been checked that $G_0 =0$.  In view of \eqref{L3.5_5} the operator \eqref{Phi==} can be estimated as follows:
\begin{equation*}
\label{L3.5_6}
\| \wt{\Phi}(\bxi) \|_{L_2(\Omega) \to L_2(\Omega)} \le \frac{2 (\pi+1) d_0^2}{9\pi}\wt{C}_2(\alpha,\mu) \Theta(\bxi)^2
=: C_2(\alpha,\mu) \Theta(\bxi)^2,
\quad |\bxi| \le \delta_0(\alpha,\mu).
\end{equation*}
This yields  \eqref{F-P-O(xi)_2}.

It remains to substitute representation \eqref{4.10} into the contour integral in  \eqref{G1==}. Since
the operator-function  $R_{0}^{\bot}(\zeta)$ is holomorphic inside the contour  $\Gamma$, we have
\begin{equation*}
\begin{aligned}
\wt{G}_{1}(\bxi) &=  \frac{1}{2\pi i}\oint_{\Gamma} \Bigl( I + (\zeta + \mu_+) \Bigl(-\frac{1}{\zeta}P+R_{0}^{\bot}(\zeta)\Bigr) \Bigr) \mathbb{T}(\bxi)
\Bigl(-\frac{1}{\zeta}P+R_{0}^{\bot}(\zeta)\Bigr) \zeta \,d\zeta
\\
&=  \frac{1}{2\pi i}\oint_{\Gamma} \frac{1}{\zeta} \mu_+ P \mathbb{T}(\bxi)  P \,d\zeta =
 \mu_+ P \mathbb{T}(\bxi)  P.
\end{aligned}
  \end{equation*}
This implies the representation in    \eqref{F= P+ F1_2}.
\end{proof}

\begin{remark}
\label{rem3.6}
 By the definition of the operator ${\mathbb T}(\bxi)$ in  \eqref{L2.6_2}, we have
$$
a(\bxi)[Pu,Pv] - a(\mathbf{0})[Pu,Pv] = a(\mathbf{0})[{\mathbb T}(\bxi)Pu,Pv] + \mu_+ ({\mathbb T}(\bxi)Pu,Pv)_{L_2(\Omega)}.
$$
Since $P$ is the orthogonal projection onto  $\Ker \A(\mathbf{0})$, then $a(\mathbf{0}) [{\mathbb T}(\bxi)P u, Pv] =0$,
and,  thus
\begin{equation}
\label{remm1}
a(\bxi)[Pu,Pv] - a(\mathbf{0})[Pu,Pv] = \mu_+ ( {\mathbb T}(\bxi) Pu, Pv)_{L_2(\Omega)}, \quad u,v \in L_2(\Omega).
  \end{equation}
  Therefore, $\mu_+ P {\mathbb T}(\bxi) P$ is a bounded operator in  $L_2(\Omega)$ that corresponds to the form
  in the left-hand side of \eqref{remm1}.
For consistency of notation in the cases $1 \le \alpha<2$ and $0< \alpha <1$, in what follows we denote
  $$
  \mu_+ P {\mathbb T}(\bxi) P = P \Delta \A(\bxi) P.
  $$
\end{remark}

\subsection{Analysis of operator $ P \Delta \mathbb{A}(\bxi)  P$}

By the definition of the operator $\Delta \A(\bxi)$, taking into account \eqref{remm1} and the relation $P = (\cdot, \mathbf{1}_\Omega)  \mathbf{1}_\Omega$, we have
\begin{equation}\label{2.30}
 P \Delta \mathbb{A}(\bxi)  P = \rho(\bxi) P,
  \end{equation}
where
\begin{equation}\label{2.31}
\begin{aligned}
\rho(\bxi) = a(\bxi)[{\mathbf 1}_\Omega,{\mathbf 1}_\Omega] - a(\mathbf{0})[{\mathbf 1}_\Omega,{\mathbf 1}_\Omega] =a(\bxi)[{\mathbf 1}_\Omega,{\mathbf 1}_\Omega]=
\intop_{\R^d} d\y \intop_\Omega d\x\, \mu(\x,\y) \frac{ 1 - \cos(\langle \bxi, \x -\y\rangle)}{|\x -\y |^{d+\alpha}}.
 \end{aligned}
  \end{equation}
The Fourier series of the periodic function $\mu(\x,\y)$ reads
\begin{equation}\label{2.32}
\mu(\x,\y) = \sum_{\m, \mathbf{l} \in \Z^d} \wh{\mu}_{\m, \mathbf{l}} e^{2\pi i (\langle \m, \x\rangle + \langle \mathbf{l}, \y\rangle)}
  \end{equation}
with the coefficients    $\wh{\mu}_{\m, \mathbf{l}}$ defined by
  $$
  \wh{\mu}_{\m, \mathbf{l}} = \intop_\Omega \intop_\Omega d\x\, d\y\, \mu(\x,\y) e^{-2\pi i (\langle \m, \x\rangle + \langle \mathbf{l}, \y\rangle)},\quad \m, \mathbf{l} \in \Z^d.
  $$
  The symmetry of $\mu(\x,\y)$ implies that
$ \wh{\mu}_{\m, \mathbf{l}} =  \wh{\mu}_{ \mathbf{l}, \m}$, $\m, \mathbf{l} \in \Z^d$.
Denote
\begin{equation}\label{2.33}
\mu^0  :=  \wh{\mu}_{\mathbf{0}, \mathbf{0}} = \intop_\Omega \intop_\Omega d\x\, d\y\, \mu(\x,\y).
  \end{equation}

\begin{lemma}
\label{lem2.6}
The function $\rho(\bxi)$ given by  \eqref{2.31} admits the representation
\begin{align}
\label{2.35}
\rho(\bxi)& = \mu^0 V_\alpha(\bxi) +  \rho_*(\bxi),
\\
\label{2.35aa}
\rho_*(\bxi) &= \intop_{\R^d} d\z \,  \mu_*(\z)
 \frac{ 1 - \cos (\langle \bxi, \z\rangle)}{|\z |^{d+\alpha}},
 \end{align}
 where $\mu^0$ and $V_\alpha(\bxi)$ are introduced in \eqref{2.33} and  \eqref{e1.6c}, respectively, and
 $\mu_*$ is a zero average, even,  $\Z^d$-periodic function given by
$\mu_*(\z) = \int_\Omega \mu(\x,\z+\x) \,d\x - \mu^0$. Furthermore, the function $\mu_*$
satisfies the estimates
\begin{equation}
\label{2.37}
 \mu_- - \mu^0 \le \mu_*(\z) \le \mu_+ - \mu^0,
  \end{equation}
  and its Fourier series reads
  \begin{equation}
\label{2.34}
\mu_*(\z) = \sum_{\m \in \Z^d \setminus \{ \mathbf{0} \} } \wh{\mu}_{\m,-\m}  \cos (2\pi  \langle\m,\z\rangle ).
\end{equation}
\end{lemma}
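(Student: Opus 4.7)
The plan is to reduce the double integral defining $\rho(\bxi)$ to a single integral in $\z \in \R^d$ by the change of variables $\z = \y - \x$. Starting from \eqref{2.31} and using that $\cos$ is even, one obtains
\begin{equation*}
\rho(\bxi) = \intop_{\R^d} d\z\, \tilde\mu(\z)\, \frac{1 - \cos \langle \bxi, \z \rangle}{|\z|^{d + \alpha}}, \qquad \tilde\mu(\z) := \intop_\Omega \mu(\x, \x + \z)\, d\x.
\end{equation*}
Periodicity of $\mu$ in its second argument gives $\tilde\mu(\z + \m) = \tilde\mu(\z)$ for all $\m \in \Z^d$, and Fubini together with the substitution $\y = \x + \z$ yields $\intop_\Omega \tilde\mu(\z)\, d\z = \intop_\Omega \intop_\Omega \mu(\x,\y)\,d\x\,d\y = \mu^0$. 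Setting $\mu_*(\z) := \tilde\mu(\z) - \mu^0$ and splitting the integral immediately produces the decomposition \eqref{2.35}--\eqref{2.35aa}, with the first piece equal to $\mu^0 V_\alpha(\bxi)$ by \eqref{e1.6b}. The bound \eqref{2.37} follows at once from \eqref{e1.1} applied pointwise in $\x$.

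Evenness of $\mu_*$ can be verified via the symmetry $\mu(\x,\y) = \mu(\y,\x)$ and a periodicity argument: starting from $\tilde\mu(-\z) = \intop_\Omega \mu(\x, \x - \z)\,d\x$ and applying the substitution $\x \mapsto \x + \z$ (whose integrand is $\Z^d$-periodic in $\x$, so the shifted domain $\Omega - \z$ can be replaced by $\Omega$) one gets $\intop_\Omega \mu(\x + \z, \x)\,d\x = \intop_\Omega \mu(\x, \x + \z)\,d\x = \tilde\mu(\z)$.

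For the Fourier expansion \eqref{2.34} I would insert the series \eqref{2.32} into the definition of $\tilde\mu$ and exchange summation with integration; the $\x$-integral over $\Omega$ of $e^{2\pi i \langle \m + \mathbf{l}, \x \rangle}$ forces $\m = -\mathbf{l}$, yielding
\begin{equation*}
\tilde\mu(\z) = \sum_{\mathbf{l} \in \Z^d} \wh\mu_{-\mathbf{l}, \mathbf{l}}\, e^{2\pi i \langle \mathbf{l}, \z \rangle} = \sum_{\mathbf{l} \in \Z^d} \wh\mu_{\mathbf{l}, -\mathbf{l}}\, e^{2\pi i \langle \mathbf{l}, \z \rangle},
\end{equation*}
where the second equality uses the symmetry $\wh\mu_{-\mathbf{l}, \mathbf{l}} = \wh\mu_{\mathbf{l}, -\mathbf{l}}$. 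Subtracting the $\mathbf{l} = \mathbf{0}$ term (which equals $\mu^0$) and pairing $\mathbf{l}$ with $-\mathbf{l}$ recovers \eqref{2.34}, once one notes that $\wh\mu_{\mathbf{l},-\mathbf{l}}$ is real: reality of $\mu$ gives $\overline{\wh\mu_{\mathbf{l},-\mathbf{l}}} = \wh\mu_{-\mathbf{l},\mathbf{l}}$, which combined with the symmetry above equals $\wh\mu_{\mathbf{l},-\mathbf{l}}$. No step presents a real obstacle; the only delicate point is the bookkeeping of symmetry and reality relations so that the paired exponentials collapse to pure cosines with the coefficients in the form stated in \eqref{2.34}.
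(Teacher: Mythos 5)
Your proof is correct and follows essentially the same route as the paper: change variables $\z = \y - \x$ to reduce $\rho$ to a single integral against $\tilde\mu(\z) = \int_\Omega \mu(\x,\x+\z)\,d\x$, identify its period, mean $\mu^0$, and bounds, set $\mu_* = \tilde\mu - \mu^0$, and compute the Fourier coefficients using the constraint $\m = -\mathbf{l}$ together with $\wh\mu_{\m,-\m} = \wh\mu_{-\m,\m}$. The only cosmetic difference is that the paper computes $\wh\mu_{*,\m}$ directly by its defining integral, whereas you insert the double Fourier series \eqref{2.32} and integrate; both are the same calculation.
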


\begin{proof}
Making the change of variables $\y \mapsto \z = \y - \x$ in  \eqref{2.31} yields
\begin{equation}
\label{2.35a}
\rho(\bxi) =
  \intop_\Omega d\x \intop_{\R^d} d\z \, \mu(\x,\z+\x)
 \frac{ 1 - \cos (\langle \bxi, \z\rangle)}{|\z |^{d+\alpha}} =
  \intop_{\R^d} d\z\, f(\z)  \frac{ 1 - \cos (\langle \bxi, \z\rangle)}{|\z |^{d+\alpha}},
\end{equation}
where
%где
$$
f(\z) =  \intop_{\Omega} d\x \, \mu(\x,\z+\x).
$$
Notice that due to conditions \eqref{e1.1} and \eqref{e1.2}, the function $f(\z)$ is $\Z^d$-periodic
and such that $f(\z)= f(-\z)$. Moreover, $f(\z)$ satisfies the estimates
$\mu_- \le f(\z) \le \mu_+$.

Taking into account the periodicity of $\mu$ one can calculate the average of $f(\z)$:
$$
\intop_\Omega f(\z) \, d\z =  \intop_\Omega \intop_{\Omega} d\z\,d\x \, \mu(\x,\z+\x) =
 \intop_\Omega \intop_{\Omega} d\y\,d\x \, \mu(\x,\y) = \mu^0.
$$
Letting
\begin{equation}
\label{2.36}
\mu_*(\z) := f(\z) - \mu^0,
  \end{equation}
from \eqref{2.35a} and the definition of  $V_\alpha(\bxi)$ in  \eqref{e1.6b} we deduce \eqref{2.35}--\eqref{2.35aa}.

By construction, $\mu_*(\z)$ is a $\Z^d$-periodic, even, zero average function that satisfies estimates    \eqref{2.37}.
Denote by $\wh{\mu}_{*,\m}$, $\m\in\mathbb Z^d$, the Fourier coefficients of the function  $\mu_*$.
Since the average of  $\mu_*$ is equal to zero, $\wh{\mu}_{*,0}=0$. For
\hbox{$\m \in \Z^d \setminus \{ \mathbf{0} \}$} we have
$$
\wh{\mu}_{*,\m} = \intop_\Omega \intop_\Omega d\z\, d\x\, (\mu(\x,\z+\x) - \mu^0) e^{- 2\pi i \langle\m,\z\rangle}=
\intop_\Omega \intop_\Omega d\x\, d\y\, \mu(\x,\y)  e^{- 2\pi i \langle\m,\y - \x\rangle}
= \wh{\mu}_{-\m,\m}.
$$
Therefore,
$$
\mu_*(\z) = \sum_{\m \in \Z^d \setminus \{ \mathbf{0} \} } \wh{\mu}_{-\m,\m}  e^{ 2\pi i \langle\m,\z\rangle}.
$$
Taking into account the symmetry $\wh{\mu}_{\m,-\m} = \wh{\mu}_{-\m,\m}$ we finally obtain \eqref{2.34}.
\end{proof}

\begin{lemma}
\label{lem2.7}
Let conditions \eqref{e1.1} and \eqref{e1.2} be satisfied, and assume that $0< \alpha< 2$. Then
for the function $\rho_*(\bxi)$ defined in \eqref{2.35aa}  the following upper bound holds true:
\begin{equation}
\label{2.40}
|\rho_*(\bxi) | \leq C_3(\alpha,\mu) \begin{cases}|\bxi|^{1+\alpha}, & \hbox{if } 0 < \alpha < 1, \\
|\bxi|^2  (1 + |\operatorname{ln} |\bxi||), &  \hbox{if } \alpha =1, \\ |\bxi|^2, & \hbox{if } 1 < \alpha < 2,\end{cases}\qquad
\hbox{for all }\bxi \in \wt{\Omega}.
  \end{equation}
  The constant $C_3(\alpha,\mu)$  depends only on  $d$, $\alpha$ and  $\mu_+$.
\end{lemma}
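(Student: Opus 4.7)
The plan is to split the integration domain at a fixed scale, using the Taylor bound on $1-\cos$ in the inner region and exploiting the zero mean of $\mu_*$ on each periodicity cell in the outer region. Concretely, fix $R_0>0$ so that the ball $\{|\z|\le R_0\}$ contains all cells adjacent to the origin, and write
\[
\rho_*(\bxi) = \int_{|\z|\le R_0}\mu_*(\z)\,g(\z,\bxi)\,d\z \;+\; \sum_{\n\in\Z^d,\ \n+\Omega\subset\{|\z|>R_0\}}\int_{\n+\Omega}\mu_*(\z)\,g(\z,\bxi)\,d\z,
\]
with $g(\z,\bxi):=(1-\cos\langle\bxi,\z\rangle)/|\z|^{d+\alpha}$. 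The inner piece is handled directly via $|1-\cos\langle\bxi,\z\rangle|\le |\bxi|^2|\z|^2/2$ together with the uniform bound $\|\mu_*\|_\infty\le\mu_+$ coming from \eqref{2.37}: it contributes $C(d,\alpha,\mu_+)|\bxi|^2/(2-\alpha)$, which for $\alpha<2$ is always dominated by any of the three target rates on the right-hand side of \eqref{2.40}.

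For the outer sum, the key observation is that periodicity of $\mu_*$ gives $\int_{\n+\Omega}\mu_*(\z)\,d\z=0$, so on each cell I may subtract the constant $g(\n,\bxi)$ and use the mean-value inequality to obtain
\[
\Bigl|\int_{\n+\Omega}\mu_*(\z)\,g(\z,\bxi)\,d\z\Bigr|\;\le\;C_d\,\|\mu_*\|_\infty\sup_{w\in\n+\Omega}|\nabla_{\!w}\,g(w,\bxi)|.
\]
Differentiating $g$ produces two terms, which I bound in two regimes. The Taylor bounds $|\sin t|\le|t|$ and $1-\cos t\le t^2/2$ give $|\nabla g(w,\bxi)|\le C|\bxi|^2|w|^{1-d-\alpha}$ for all $w$, whereas the trivial bounds $|\sin t|,\,1-\cos t\le2$ give $|\nabla g(w,\bxi)|\le C\bigl(|\bxi|\,|w|^{-d-\alpha}+|w|^{-d-\alpha-1}\bigr)$. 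Since $|w|\asymp|\n|$ on $\n+\Omega$, I split the cell sum at the crossover scale $|\n|\sim 1/|\bxi|$, using the Taylor bound for $|\n|\le 1/|\bxi|$ and the uniform bound for $|\n|>1/|\bxi|$.

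Comparing the resulting sums with the corresponding integrals, the Taylor regime contributes $C|\bxi|^2\int_{R_0}^{1/|\bxi|}r^{-\alpha}\,dr$, and the three cases of \eqref{2.40} emerge from the behaviour of this integral: for $0<\alpha<1$ it behaves like $|\bxi|^{\alpha-1}$, giving $C|\bxi|^{1+\alpha}$; for $\alpha=1$ it is logarithmic, giving $C|\bxi|^2(1+|\ln|\bxi||)$; for $1<\alpha<2$ it is bounded, giving $C|\bxi|^2$. The tail sum over $|\n|>1/|\bxi|$, evaluated by the uniform bound, is always $O(|\bxi|^{1+\alpha})$ and hence never dominant. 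I expect the main obstacle to be the sharpness of the $\nabla g$ estimate: using a single estimate throughout loses a factor $|\bxi|^{\alpha}$ or $|\bxi|^{2-\alpha}$ in the wrong regime and yields only $O(|\bxi|^\alpha)$, so the crossover split at $|\n|\sim 1/|\bxi|$ together with the zero-mean cancellation on each cell is essential to recover the sharp rates in \eqref{2.40}.
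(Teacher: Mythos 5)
Your proposal is correct and follows essentially the same strategy as the paper's proof: an inner ball of fixed radius handled by the Taylor bound $1-\cos\langle\bxi,\z\rangle\le|\bxi|^2|\z|^2/2$, an intermediate shell of cells up to scale $|\z|\sim1/|\bxi|$ where the zero mean of $\mu_*$ on each cell is exploited via a gradient estimate of Taylor type, and a tail beyond $1/|\bxi|$ treated with the trivial (non-Taylor) gradient bound; the three rates in \eqref{2.40} emerge from the intermediate shell exactly as you predict, and the tail and inner contributions are $O(|\bxi|^{1+\alpha})$ and $O(|\bxi|^2)$, respectively, hence never dominant after using $|\bxi|\le\pi\sqrt d$. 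The paper additionally fixes the inner radius $R$ so that $|\z|^{d+\alpha-1}$ varies by at most a factor $2$ across each cell beyond $R$, which makes your ``$|w|\asymp|\n|$'' step quantitative, but this is a bookkeeping detail rather than a different idea.
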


\begin{proof}
Choose $R = R(d,\alpha)>0$ such that $\left(\frac{R}{R - \sqrt{d}}\right)^{d+\alpha +1} = 2$. An easy computation
yields
$$
R(d,\alpha) = \frac{\sqrt{d} \,2^{\frac{1}{d+\alpha+1}} }{2^{\frac{1}{d+\alpha +1}}-1}.
$$
Let $\Sigma$ be a collection of indices  $\n \in \Z^d$ such that  $\Omega + \n$
has a non-trivial intersection with the set $\{\z \in \R^d: R \le |\z| \le R \pi \sqrt{d}|\bxi|^{-1}\}$,
and consider the following partition of $\mathbb R^d$: \ $\mathbb R^d=\Xi_1\cup\Xi_2\cup\Xi_3$ with
$$
\Xi_1 = \bigcup_{\n \in \Sigma} (\Omega + \n), \quad \Xi_2 =( \R^d \setminus \Xi_1) \cap \{\z: |\z| < R\},
\quad \Xi_3 =( \R^d \setminus \Xi_1) \cap \{\z: |\z| > R\pi \sqrt{d}|\bxi|^{-1} \}.
$$
By the choice of  $R$ we have
\begin{equation}
\label{2.41}
 \frac{|\z_1|^{d+\alpha -1}}{|\z_2|^{d+\alpha -1}} \le 2,\quad\hbox{if } \z_1,\z_2 \in \Omega + \n, \ \ \hbox{and }\ \n \in \Sigma.
  \end{equation}
  Clearly,
\begin{equation}
\label{2.42}
\rho_*(\bxi) = \rho_*^{(1)} (\bxi) + \rho_*^{(2)} (\bxi) + \rho_*^{(3)} (\bxi)
  \end{equation}
with
$$
\rho_*^{(j)} (\bxi) = \intop_{\Xi_j} d\z \,  \mu_*(\z)
 \frac{ 1 - \cos (\langle \bxi, \z\rangle)}{|\z |^{d+\alpha}},\quad  j=1,2,3.
 $$
In order to estimate the function $\rho_*^{(2)} (\bxi)$ we use the upper bound $|\mu_*(\x)| \le \mu_+$ which is a straightforward consequence of  \eqref{2.37}, and an evident inequality  $1 - \cos (\langle \bxi, \z\rangle) = 2 \sin^2 (\frac{\langle \bxi, \z\rangle}{2}) \le \frac{|\bxi|^2 |\z|^2}{2}$. This yields
\begin{equation}
\label{2.43}
|\rho_*^{(2)}(\bxi)| \le \frac{1}{2}\mu_+  |\bxi|^2 \intop_{|\z| <R} \frac{d\z}{ |\z|^{d + \alpha -2}} =
   C_3^{(2)} |\bxi|^2,  \quad
  C_3^{(2)} = \frac{\mu_+  \omega_d  R^{2-\alpha}}{2(2-\alpha)}.
  \end{equation}

Denote
$$
\varphi_{\bxi} (\z) := \frac{ 1 - \cos (\langle \bxi, \z\rangle)}{|\z |^{d+\alpha}};
\quad  \varphi_{\bxi}^{(\n)} := \intop_{\Omega + \n} \varphi_{\bxi} (\z)\, d\z,\quad \n\in\Sigma.
$$
Since the function  $\mu_*$ is periodic and $\int_\Omega \mu_*(\z)\,d\z =0$, we have
$$
\intop_{\Omega + \n} d\z \,  \mu_*(\z) \varphi_{\bxi} (\z) =
\intop_{\Omega + \n} d\z \,  \mu_*(\z) \bigl(\varphi_{\bxi} (\z) - \varphi_{\bxi}^{(\n)}).
$$
From the definition of $\varphi_{\bxi}$ it follows that
$$
|\nabla \varphi_{\bxi} (\z)| \le \frac{|\bxi| |\sin (\langle \bxi, \z\rangle)|}{|\z|^{d+\alpha}}
+ (d+\alpha) \frac{ 1 - \cos (\langle \bxi, \z\rangle)}{|\z|^{d+\alpha +1}}
\le \frac{ (d+2+\alpha) |\bxi|^2}{ 2 |\z|^{d+\alpha-1}}.
$$
This in turn  implies the inequality
$$
\max_{\z \in \Omega + \n} | \varphi_{\bxi} (\z) - \varphi_{\bxi}^{(\n)}| \le \sqrt{d} \max_{\z \in \Omega + \n} |\nabla \varphi_{\bxi} (\z)| \le \sqrt{d} \max_{\z \in \Omega + \n} \frac{ (d+2+\alpha) |\bxi|^2}{ 2 |\z|^{d+\alpha-1}}.
$$
Therefore,
$$
\left| \int_{\Omega + \n} d\z \,  \mu_*(\z) \varphi_{\bxi} (\z) \right| \le
\mu_+ \sqrt{d} \max_{\z \in \Omega + \n} \frac{ (d+2+\alpha) |\bxi|^2}{ 2 |\z|^{d+\alpha-1}}
\le  \mu_+ \sqrt{d} (d+ 2+ \alpha) |\bxi|^2 \intop_{\Omega + \n}   \frac{d\z}{|\z|^{d+\alpha -1}};
$$
in the last inequality here \eqref{2.41} has been used.  Summing these estimates over $\n \in \Sigma$ yields
\begin{equation}
\label{2.44}
\begin{aligned}
|\rho_*^{(1)} (\bxi)| \le \sqrt{d} (d+2+\alpha) \mu_+ |\bxi|^2\intop_{R - \sqrt{d} \le |\z| \le R \pi \sqrt{d}|\bxi|^{-1}+ \sqrt{d} }   \frac{d\z}{|\z|^{d+\alpha-1}}.
\end{aligned}
\end{equation}
Calculating the integral on the right-hand side  we have
\begin{equation}
\label{2.44a}
\begin{aligned}
 |\rho_*^{(1)} (\bxi)|
& \le C_3^{(1)} \begin{cases} |\bxi|^{1+\alpha}, &\hbox{if } 0 < \alpha < 1, \\
 |\bxi|^2 (1 + |\operatorname{ln} |\bxi||), &\hbox{if } \alpha =1,
 \\  |\bxi|^2,  & \hbox{if }  1 < \alpha < 2,\end{cases}
\end{aligned}
\end{equation}
with
$$
\begin{aligned}
C_3^{(1)} = C_3^{(1)}(d,\alpha) &= \sqrt{d} (d+2+\alpha) \mu_+  \omega_d
\begin{cases}  (1-\alpha)^{-1} (R \pi \sqrt{d} + d\pi)^{1-\alpha} , &\hbox{if } 0 < \alpha < 1,
\\ \operatorname{ln} (R \pi \sqrt{d} + d\pi), &\hbox{if }  \alpha=1,
\\  (\alpha -1)^{-1} (R - \sqrt{d})^{1-\alpha}, &\hbox{if }  1 < \alpha < 2. \end{cases}
\end{aligned}
$$

Considering the estimate
$$
|\nabla \varphi_{\bxi} (\z)| \le \frac{|\bxi| |\sin (\langle \bxi, \z\rangle)|}{|\z|^{d+\alpha}}
+ (d+\alpha) \frac{ 1 - \cos (\langle \bxi, \z\rangle)}{|\z|^{d+\alpha +1}}
\le \frac{|\bxi| }{|\z|^{d+\alpha}}
+  \frac{ 2(d+\alpha)}{|\z|^{d+\alpha +1}},
$$
in the same way as in the proof of inequality \eqref{2.44} we obtain
\begin{equation}
\label{2.45}
\begin{aligned}
|\rho_*^{(3)} (\bxi)| \le 2 \mu_+ \sqrt{d} \intop_{|\z| > R \pi \sqrt{d}|\bxi|^{-1} - \sqrt{d} } d\z\,
 \left( \frac{|\bxi| }{|\z|^{d+\alpha}}+  \frac{ 2(d+\alpha)}{|\z|^{d+\alpha +1}} \right)
\le C_3^{(3)}|\bxi|^{1+\alpha},
\\
C_3^{(3)} = 2 \mu_+ \sqrt{d}\, \omega_d \left(  \frac{(R \pi \sqrt{d} - d\pi)^{-\alpha} }{\alpha}
+ \frac{ 2(d+\alpha) (R \pi \sqrt{d} - d\pi)^{-1-\alpha} }{1+\alpha} \right).
\end{aligned}
\end{equation}
Finally, combining \eqref{2.42}, \eqref{2.43}, \eqref{2.44a}, \eqref{2.45}  and taking into account the inequality
$|\bxi| \le \pi \sqrt{d}$,  $\bxi \in \wt{\Omega}$, we arrive at the desired estimate \eqref{2.40} with the constant
\begin{equation*}
\label{2.46}
C_3(\alpha,\mu) = \begin{cases} C_3^{(1)}
+ C_3^{(2)}(\pi \sqrt{d})^{1-\alpha} + C_3^{(3)}, &\hbox{if } 0 < \alpha < 1,
\\
C_3^{(1)} +C_3^{(2)} + C_3^{(3)}, &\hbox{if }   \alpha =1,
\\
C_3^{(1)} +C_3^{(2)} + C_3^{(3)}(\pi \sqrt{d})^{\alpha-1}, &\hbox{if }  1 < \alpha < 2.
\end{cases}
\end{equation*}
\end{proof}

The following statement is a consequence of Propositions  \ref{prop2.5} and \ref{prop2.5_2}, relation  \eqref{2.30}
and Lemmas \ref{lem2.6} and \ref{lem2.7}:

\begin{proposition}
\label{prop2.8}
Let conditions \eqref{e1.1} and  \eqref{e1.2} be fulfilled, and assume that   $0< \alpha < 2$. Then for all $\bxi$ such that
$|\bxi| \le \delta_0(\alpha,\mu)$  the following estimate holds:
\begin{equation*}
\label{2.47}
\left\| \A(\bxi) F(\bxi) - \mu^0 V_\alpha (\bxi) P \right\|_{L_2(\Omega) \to L_2(\Omega)} \le C_4(\alpha,\mu) \begin{cases}
|\bxi|^{2\alpha}, &\hbox{if }   0 < \alpha < 1, \\
|\bxi|^2  (1 + |\operatorname{ln} |\bxi||)^2, &\hbox{if }  \alpha =1, \\
|\bxi|^2, &\hbox{if }  1< \alpha <2,
\end{cases}
\end{equation*}
here $\mu^0$ and $V_\alpha(\bxi)$ are defined in  \eqref{2.33} and \eqref{e1.6c}, respectively, and the constant
 $C_4(\alpha,\mu)$  is given by the expression
$$
C_4(\alpha,\mu) = \begin{cases} C_2(\alpha,\mu) + C_3(\alpha,\mu) (\pi \sqrt{d})^{1-\alpha}, &\hbox{if }
 0< \alpha <1,
\\
C_2(\alpha,\mu) + C_3(\alpha,\mu), &\hbox{if }  1 \le \alpha < 2,
\end{cases}
$$
this constant depends only on $d$, $\alpha$, $\mu_-$, $\mu_+$.
\end{proposition}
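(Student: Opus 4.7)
The plan is to assemble this bound purely by combining the ingredients already established: the threshold approximations of Propositions \ref{prop2.5} and \ref{prop2.5_2}, the decomposition $P\Delta\A(\bxi)P = \rho(\bxi)P$ from \eqref{2.30}, the splitting of $\rho(\bxi)$ in Lemma \ref{lem2.6}, and the pointwise bound on $\rho_*(\bxi)$ from Lemma \ref{lem2.7}. No new analytic input is needed; the only delicate point is the bookkeeping in each of the three regimes for $\alpha$.

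First I would treat the cases $0<\alpha<1$ and $1\le\alpha<2$ in parallel, using Remark \ref{rem3.6} so that in both regimes we can write
\[
\A(\bxi)F(\bxi) \;=\; P\Delta\A(\bxi)P \;+\; \mathcal{R}(\bxi),
\]
where $\mathcal{R}(\bxi)$ is $\Phi(\bxi)$ from Proposition \ref{prop2.5} when $0<\alpha<1$ and $\wt\Phi(\bxi)$ from Proposition \ref{prop2.5_2} when $1\le\alpha<2$. In either case $\|\mathcal{R}(\bxi)\|_{L_2(\Omega)\to L_2(\Omega)}$ is controlled by $C_2(\alpha,\mu)|\bxi|^{2\alpha}$, respectively $C_2(\alpha,\mu)\Theta(\bxi)^2$. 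Applying \eqref{2.30} followed by Lemma \ref{lem2.6} we rewrite the leading term as
\[
P\Delta\A(\bxi)P \;=\; \rho(\bxi)P \;=\; \mu^0 V_\alpha(\bxi)P \;+\; \rho_*(\bxi)P,
\]
so that $\A(\bxi)F(\bxi)-\mu^0 V_\alpha(\bxi)P = \rho_*(\bxi)P + \mathcal{R}(\bxi)$, and since $\|P\|=1$ the triangle inequality reduces everything to summing $|\rho_*(\bxi)|$ (Lemma \ref{lem2.7}) and the bound on $\|\mathcal{R}(\bxi)\|$.

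For $0<\alpha<1$ the two contributions carry exponents $1+\alpha$ (from $\rho_*$) and $2\alpha$ (from $\Phi$); since $2\alpha<1+\alpha$, the $\rho_*$ term is the smaller power, and I would absorb it into $|\bxi|^{2\alpha}$ using $|\bxi|\le \pi\sqrt{d}$ to produce the factor $(\pi\sqrt{d})^{1-\alpha}$ appearing in $C_4(\alpha,\mu)$. For $\alpha=1$ both contributions carry a factor $|\bxi|^2$, with $(1+|\ln|\bxi||)$ from $\rho_*$ and $(1+|\ln|\bxi||)^2$ from $\wt\Phi$, so I would bound the first logarithmic factor by the square and add the constants. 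For $1<\alpha<2$ both estimates give a clean $|\bxi|^2$ and the constants simply add.

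The only thing that requires mild care is the monotonicity bookkeeping: we must verify that in the regime $0<\alpha<1$ the exponent $2\alpha$ is genuinely smaller than $1+\alpha$ so that $|\bxi|^{1+\alpha}\le (\pi\sqrt{d})^{1-\alpha}|\bxi|^{2\alpha}$ for $\bxi\in\wt\Omega$, and that at $\alpha=1$ the logarithmic loss is absorbed by the square; both are elementary. No new estimate needs to be proved, and no obstacle of real substance is expected — this proposition is essentially a synthesis step collecting the nontrivial work already done in Section 2 and in Propositions \ref{prop2.5}, \ref{prop2.5_2} and Lemmas \ref{lem2.6}, \ref{lem2.7}.
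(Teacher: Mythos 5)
Your proposal is correct and coincides with the paper's argument: the paper itself states that Proposition \ref{prop2.8} is a direct consequence of Propositions \ref{prop2.5}, \ref{prop2.5_2}, relation \eqref{2.30} and Lemmas \ref{lem2.6}, \ref{lem2.7}, and your assembly of these ingredients — writing $\A(\bxi)F(\bxi)-\mu^0V_\alpha(\bxi)P=\rho_*(\bxi)P+\Phi(\bxi)$ (resp.\ $+\wt\Phi(\bxi)$) via Remark \ref{rem3.6}, then bounding each summand and absorbing the $|\bxi|^{1+\alpha}$ term into $|\bxi|^{2\alpha}$ for $0<\alpha<1$ using $|\bxi|\le\pi\sqrt{d}$ — is exactly the intended synthesis. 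One small wording slip: in the case $0<\alpha<1$ the $\rho_*$ contribution carries the \emph{larger} exponent $1+\alpha>2\alpha$, hence is the smaller contribution for small $|\bxi|$; the arithmetic you carry out is nonetheless the right one.
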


\section{Approximation of the resolvent  $(\A + \eps^\alpha I)^{-1}$}\label{Sec3}

\subsection{Approximation of the resolvent  of operator  $\A(\bxi)$}\label{sec3.1}

Denote
\begin{equation}
\label{4.1}
\Theta(\bxi) := \begin{cases}
|\bxi|^{\alpha}, &\hbox{if }   0 < \alpha < 1, \\
|\bxi| (1 + |\operatorname{ln} |\bxi||), &\hbox{if }  \alpha =1, \\
|\bxi|, &\hbox{if }  1< \alpha <2.
\end{cases}
\end{equation}
This notation is compatible with that in \eqref{*.00}. We also define the operator
\begin{equation*}
\label{Xi=}
\Xi(\bxi,\eps) := (\A(\bxi)+\varepsilon^{\alpha}I)^{-1} F(\bxi) - \left( \mu^0 V_\alpha(\bxi) + \eps^\alpha \right)^{-1} P,
 \ \ \bxi\in\widetilde{\Omega}, \ \ \eps >0,
\end{equation*}
where $\mu^0$ is given by \eqref{2.33}, and $V_\alpha(\bxi)$ by \eqref{e1.6c}.
\begin{proposition}
Let conditions \eqref{e1.1} and \eqref{e1.2} be satisfied, and assume that $0 < \alpha < 2$.
Then the following estimate holds:
\begin{equation}
\label{Xi=le}
\| \Xi(\bxi,\eps) \|_{L_2(\Omega) \to L_2(\Omega)} \le
\frac{2 C_1(\alpha,\mu) \Theta(\bxi) }{\mu_{-}c_0(d,\alpha)|\bxi|^{\alpha} + \eps^\alpha} +
\frac{C_4(\alpha,\mu) \Theta(\bxi)^2 }{(\mu_{-}c_0(d,\alpha) |\bxi|^{\alpha} + \eps^\alpha)^2},
\quad |\bxi|\le\delta_{0}(\alpha,\mu), \quad \eps>0,
\end{equation}
with $\Theta(\bxi)$ defined in \eqref{4.1}.
\end{proposition}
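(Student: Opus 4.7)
The plan is to use Proposition \ref{prop2.1}: for $|\bxi| \le \delta_0(\alpha,\mu)$ the projector $F(\bxi)$ has rank one and is the spectral projector of $\A(\bxi)$ associated with the bottom eigenvalue $\lambda_1(\bxi)$. Consequently $\A(\bxi) F(\bxi) = \lambda_1(\bxi) F(\bxi)$, which reduces the resolvent acting on $F(\bxi)$ to a scalar:
$$
(\A(\bxi) + \eps^\alpha I)^{-1} F(\bxi) = \frac{1}{\lambda_1(\bxi) + \eps^\alpha}\, F(\bxi).
$$
Writing $\nu(\bxi) := \mu^0 V_\alpha(\bxi)$ for brevity and telescoping via $\pm F(\bxi)/(\nu(\bxi) + \eps^\alpha)$, I would obtain the elementary identity
$$
\Xi(\bxi,\eps) = \frac{F(\bxi) - P}{\nu(\bxi) + \eps^\alpha} + \frac{\nu(\bxi) - \lambda_1(\bxi)}{(\lambda_1(\bxi) + \eps^\alpha)(\nu(\bxi) + \eps^\alpha)}\, F(\bxi).
$$

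The key algebraic step is to absorb the second summand into the same shape as the first by means of the identity
$$
(\nu - \lambda_1) F = \nu (F - P) - (\A(\bxi) F - \nu P),
$$
which follows directly from $\A(\bxi) F = \lambda_1 F$. Substituting it and collecting the coefficients of $F(\bxi) - P$ produces the clean decomposition
$$
\Xi(\bxi,\eps) = (F(\bxi) - P) \cdot \frac{\lambda_1(\bxi) + \nu(\bxi) + \eps^\alpha}{(\lambda_1(\bxi) + \eps^\alpha)(\nu(\bxi) + \eps^\alpha)} - \frac{\A(\bxi) F(\bxi) - \nu(\bxi) P}{(\lambda_1(\bxi) + \eps^\alpha)(\nu(\bxi) + \eps^\alpha)}.
$$
The prefactor of $F(\bxi) - P$ equals $\frac{1}{\nu(\bxi) + \eps^\alpha} + \frac{\nu(\bxi)}{(\lambda_1(\bxi) + \eps^\alpha)(\nu(\bxi) + \eps^\alpha)}$; since $\nu(\bxi)/(\nu(\bxi) + \eps^\alpha) \le 1$, this is bounded by $\frac{1}{\nu(\bxi) + \eps^\alpha} + \frac{1}{\lambda_1(\bxi) + \eps^\alpha}$, which is exactly the mechanism producing the factor $2$ in \eqref{Xi=le}.

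To finish, I would plug in the lower bounds $\lambda_1(\bxi) \ge \mu_- c_0(d,\alpha) |\bxi|^\alpha$ from \eqref{e1.23} and $\nu(\bxi) = \mu^0 c_0(d,\alpha) |\bxi|^\alpha \ge \mu_- c_0(d,\alpha) |\bxi|^\alpha$ (the latter because $\mu^0 \ge \mu_-$), which imply that each reciprocal above is at most $1/(\mu_- c_0(d,\alpha) |\bxi|^\alpha + \eps^\alpha)$ and that the denominator of the second summand is at least $(\mu_- c_0(d,\alpha) |\bxi|^\alpha + \eps^\alpha)^2$. Combining this with the two threshold approximations already established in the paper, namely $\|F(\bxi) - P\| \le C_1(\alpha,\mu)\,\Theta(\bxi)$ (Proposition \ref{prop2.3} for $0 < \alpha < 1$ and Proposition \ref{prop2.3a} for $1 \le \alpha < 2$) and $\|\A(\bxi) F(\bxi) - \nu(\bxi) P\| \le C_4(\alpha,\mu)\,\Theta(\bxi)^2$ (Proposition \ref{prop2.8}), delivers \eqref{Xi=le} directly. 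I do not foresee any real obstacle here; the only point requiring attention is the consistency of the function $\Theta(\bxi)$ across the three regimes $\alpha<1$, $\alpha=1$, $\alpha>1$, but this is built into the unified definition \eqref{4.1} together with the case-by-case bounds stated in Propositions \ref{prop2.3}, \ref{prop2.3a} and \ref{prop2.8}.
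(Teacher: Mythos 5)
Your proposal is correct and follows essentially the same route as the paper: both proofs telescope $\Xi(\bxi,\eps)$ into a piece proportional to $F(\bxi)-P$ and a piece proportional to $\A(\bxi)F(\bxi)-\mu^0 V_\alpha(\bxi)P$, then combine the resolvent bounds \eqref{e2.39}--\eqref{e2.40} with the threshold approximations of Propositions~\ref{prop2.3}, \ref{prop2.3a} and \ref{prop2.8}. The only stylistic difference is that you exploit the rank-one structure of $F(\bxi)$ to reduce everything to scalar algebra, which is a clean but equivalent reformulation of the operator identity \eqref{e2.41} used in the paper.
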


\begin{proof}
According to  \eqref{e1.6c} and Proposition\ref{prop1.5} we have
\begin{align}\label{e2.39}
\|(\A(\bxi)+\varepsilon^{\alpha}I)^{-1} \|
& \le(\mu_{-} c_0(d,\alpha)|\bxi|^{\alpha}+\varepsilon^{\alpha})^{-1},\
\ \varepsilon>0,\ \ \bxi \in \wt{\Omega};
\\
\label{e2.40}
(\mu^0 V_\alpha ( \bxi)  +\varepsilon^{\alpha})^{-1}
& \le(\mu_{-} c_0(d,\alpha) |\bxi|^{\alpha}+\varepsilon^{\alpha})^{-1},\
\ \varepsilon>0,\ \ \bxi \in \wt{\Omega}.
\end{align}
After simple rearrangements the formula for $\Xi(\bxi,\eps)$ takes the form
\begin{multline}\label{e2.41}
\Xi(\bxi,\eps) =
F(\bxi)(\A(\bxi)+\varepsilon^{\alpha}I)^{-1}(F(\bxi)-P)+(F(\bxi)-P) (\mu^0 V_\alpha(\bxi) +\varepsilon^{\alpha})^{-1} P
-
\\
-
F(\bxi)(\A(\bxi)+\varepsilon^{\alpha}I)^{-1}
\left(\A(\bxi)F(\bxi) - \mu^0 V_\alpha(\bxi) P \right)
( \mu^0 V_\alpha(\bxi) +\varepsilon^{\alpha} )^{-1} P,
\end{multline}
and the required inequality \eqref{Xi=le} follows from Propositions \ref{prop2.3},  \ref{prop2.3a}, \ref{prop2.8}
and from relations (\ref{e2.39})--(\ref{e2.41}).
\end{proof}

\begin{theorem}\label{teor2.2}
Let conditions \eqref{e1.1} and \eqref{e1.2} be fulfilled, and assume that $0 < \alpha < 2$.
Then for all $\varepsilon>0$ and all $\bxi$ such that  \hbox{$|\bxi|\le\delta_{0}(\alpha,\mu)$} we have
\begin{equation}\label{e2.37}
\left\|(\A(\bxi)+\varepsilon^{\alpha}I)^{-1}\!-
(\mu^0 V_\alpha(\bxi) +\varepsilon^{\alpha})^{-1} P \right\|_{L_2(\Omega) \to L_2(\Omega)}\! \le
C_5(\alpha,\mu)\! \begin{cases} 1, &\hbox{if }  0 < \alpha <1,\\
(1 + | \operatorname{ln} \eps|)^2, &\hbox{if }  \alpha =1,
\\ \eps^{2 - 2\alpha}, &\hbox{if }  1< \alpha < 2,
\end{cases}
\end{equation}
where $\mu^0$ is given by \eqref{2.33}, and  $V_\alpha(\bxi)$ by \eqref{e1.6c}.
The constant $C_5(\alpha,\mu)$  depends only on
$d$, $\alpha$, $\mu_-$, $\mu_+$.
\end{theorem}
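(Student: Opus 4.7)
The plan is to decompose the resolvent using the spectral projection $F(\bxi)$ constructed in Section~3 by writing
\[
(\A(\bxi)+\eps^\alpha I)^{-1} - (\mu^0 V_\alpha(\bxi)+\eps^\alpha)^{-1} P
 = \Xi(\bxi,\eps) + (\A(\bxi)+\eps^\alpha I)^{-1} F(\bxi)^\perp.
\]
On the range of $F(\bxi)^\perp$ the spectrum of $\A(\bxi)$ is contained in $[d_0,\infty)$ by Proposition~\ref{prop2.1}, so the second term is bounded in $L_2(\Omega)$ by $d_0^{-1}$ uniformly in $\bxi$ and $\eps$. The first term is controlled by the preceding proposition, so the triangle inequality reduces the theorem to taking the supremum over $|\bxi|\le\delta_0(\alpha,\mu)$ of the two quotients on the right-hand side of \eqref{Xi=le}, plus the harmless additive constant $d_0^{-1}$.

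This supremum is handled by power counting, after invoking the elementary inequality
\[
\mu_-c_0(d,\alpha)|\bxi|^\alpha + \eps^\alpha \ge c_*(\alpha,\mu)\,(|\bxi|+\eps)^\alpha,
\]
which follows from the subadditivity of $t\mapsto t^\alpha$ when $0<\alpha\le 1$ and from its convexity when $1\le\alpha<2$. For $0<\alpha<1$ we have $\Theta(\bxi)=|\bxi|^\alpha$, and both quotients are then majorized by constants depending only on $\mu_-$ and $c_0(d,\alpha)$, yielding the first case of \eqref{e2.37}. For $1<\alpha<2$, $\Theta(\bxi)=|\bxi|$; splitting the ball into the regions $|\bxi|\le\eps$ and $|\bxi|>\eps$ and applying the inequality above in each region gives $O(\eps^{1-\alpha})$ for the first quotient and $O(\eps^{2-2\alpha})$ for the second, and the latter dominates since $2-2\alpha<1-\alpha<0$ so that $\eps^{2-2\alpha}\ge\eps^{1-\alpha}$ whenever $\eps<1$.

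The delicate case is $\alpha=1$, where $\Theta(\bxi)=|\bxi|(1+|\ln|\bxi||)$. The key ingredient is the monotonicity of $t\mapsto t(1-\ln t)$ on $(0,1)$: on $\{|\bxi|\le\eps\}$ this implies that the first quotient is majorized by $1+|\ln\eps|$, while on $\{|\bxi|>\eps\}$ the ratio $|\bxi|/(|\bxi|+\eps)$ is bounded by $1$ and $1+|\ln|\bxi||$ is bounded by $C(1+|\ln\eps|)$ because $|\bxi|$ stays in a fixed bounded range $[\eps,\delta_0]$. Squaring then yields the $(1+|\ln\eps|)^2$ bound for the second quotient, matching the middle case of \eqref{e2.37}. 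The principal obstacle, beyond verifying these monotonicity claims, is to merge the three case-by-case estimates into a single constant $C_5(\alpha,\mu)$ valid for all $\eps>0$; for this one treats the regime $\eps\ge 1$ separately, where the bounds become trivial, and absorbs the uniform $d_0^{-1}$ contribution coming from the $F(\bxi)^\perp$-component into $C_5(\alpha,\mu)$.
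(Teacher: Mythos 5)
Your decomposition, the appeal to Proposition~\ref{prop2.1} for the $F(\bxi)^\perp$-part, and the use of the preceding bound \eqref{Xi=le} are exactly those in the paper; the only place you differ is the final power counting, where the paper factors the denominators algebraically (for $1<\alpha<2$) and computes a supremum of $s\mapsto \frac{s(1+|\ln s|)}{cs+\eps}$ (for $\alpha=1$), whereas you split $\{|\bxi|\le\delta_0\}$ into $|\bxi|\le\eps$ and $|\bxi|>\eps$ and invoke monotonicity of $s\mapsto s(1-\ln s)$; both variants are correct and yield the same orders, so this is essentially the paper's proof with a slightly more elementary elementary-estimate step.
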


\begin{proof}
It follows from the definition of  $F(\bxi)$ and Proposition \ref{prop2.1} that
\begin{equation}\label{e2.38}
\|(\A(\bxi)+\varepsilon^{\alpha}I)^{-1}(I-F(\bxi))\|\le
\frac{1}{d_{0}},\ \ |\bxi|\le\delta_{0}(\alpha,\mu),\ \ \eps >0.
\end{equation}
In the case $0< \alpha < 1$  inequality \eqref{Xi=le} implies the estimate
\begin{equation}
\label{Xi=le_2}
\| \Xi(\bxi,\eps) \| \le
 \frac{2 C_{1}(\alpha,\mu)}{\mu_{-}c_0(d,\alpha)}+\frac{C_{4}(\alpha,\mu)}{(\mu_{-}c_0(d,\alpha))^{2}} =:
 \wt{C}_5(\alpha,\mu),\quad |\bxi|\le\delta_{0}(\alpha,\mu), \quad \eps>0, \quad 0 < \alpha < 1.
\end{equation}
Since
$$
(\A(\bxi)+\varepsilon^{\alpha}I)^{-1}-(\mu^0 V_\alpha(\bxi) +\varepsilon^{\alpha})^{-1} P=
(\A(\bxi)+\varepsilon^{\alpha}I)^{-1}(I-F(\bxi)) +\Xi(\bxi,\eps),
$$
by \eqref{e2.38} and \eqref{Xi=le_2} we obtain \eqref{e2.37} with  constant
$C_5(\alpha,\mu)= d_0^{-1} + \wt{C}_5(\alpha,\mu) $.

In the case $1< \alpha <2$ it follows from  \eqref{Xi=le} that
\begin{equation*}
\label{Xi=le_3}
\begin{aligned}
\| \Xi(\bxi,\eps) \| &\le
 |\bxi|^{\alpha -1} \cdot \frac{2 C_{1}(\alpha,\mu)|\bxi|^{2-\alpha}}{(\mu_{-}c_0(d,\alpha) |\bxi|^\alpha + \eps^\alpha)^{2/\alpha -1} } \cdot \frac{1}{(\mu_{-}c_0(d,\alpha) |\bxi|^{\alpha} + \eps^\alpha)^{2-2/\alpha}}
 \\
 &+\frac{C_{4}(\alpha,\mu)|\bxi|^{2}}{(\mu_{-}c_0(d,\alpha) |\bxi|^{\alpha} + \eps^\alpha)^{2/\alpha}} \cdot
 \frac{1}{(\mu_{-}c_0(d,\alpha) |\bxi|^{\alpha} + \eps^\alpha)^{2-2/\alpha}}
 \\
 &\le \Bigl( \frac{2 C_{1}(\alpha,\mu) (\pi \sqrt{d})^{\alpha -1}}{(\mu_{-}c_0(d,\alpha))^{2/\alpha -1}}
 + \frac{ C_{4}(\alpha,\mu) }{(\mu_{-}c_0(d,\alpha))^{2/\alpha}}  \Bigr)
\eps^{2 - 2\alpha} =: \wt{C}_5(\alpha,\mu)\eps^{2 - 2\alpha},
\\
& \ \ \ \ \ |\bxi|\le\delta_{0}(\alpha,\mu), \quad \eps>0, \quad 1 < \alpha < 2.
 \end{aligned}
\end{equation*}
Combining this inequality with \eqref{e2.38} yields \eqref{e2.37} with constant
$C_5(\alpha,\mu)= d_0^{1-2/\alpha} + \wt{C}_5(\alpha,\mu) $. % (in the case $1< \alpha <2$).

In the remaining case $ \alpha =1$ as a consequence of  \eqref{Xi=le} we have
\begin{equation}
\label{Xi=le_4}
\begin{aligned}
\| \Xi(\bxi,\eps) \| &\le
 \frac{2 C_{1}(1,\mu) |\bxi| (1 + | \operatorname{ln} |\bxi| |)}{\mu_{-}c_0(d,1) |\bxi| + \eps }
 +\frac{C_{4}(1,\mu) |\bxi|^{2} (1 + | \operatorname{ln} |\bxi| |)^2 }{(\mu_{-}c_0(d,1) |\bxi| + \eps)^{2}}
  \\
 &\le  \wt{C}_5(1,\mu)  (1 + | \operatorname{ln} \eps |)^2 ,
\quad |\bxi|\le\delta_{0}(\alpha,\mu), \quad \eps>0, \quad \alpha  =1.
 \end{aligned}
\end{equation}
Here we have used the inequality
$$
\sup_{0< s \le \delta_0}   \frac{s (1 +|\operatorname{ln} s |)}{c s + \eps } \le C_{\operatorname{max}}(c)(1 +|\operatorname{ln} \eps|), \quad \eps >0,
$$
that can be easily justified by calculating the maximum of the function $f(s) =
 \frac{s (1 +|\operatorname{ln} s |)}{c s + \eps }$.
The constant in \eqref{Xi=le_4} is given by the expression
$$
\wt{C}_5(1,\mu) = 2 C_{1}(1,\mu) C_{\operatorname{max}}( \mu_{-}c_0(d,1) ) +
C_{4}(1,\mu) C_{\operatorname{max}}^2( \mu_{-}c_0(d,1) ).
$$
From    \eqref{e2.38} and  \eqref{Xi=le_4} we obtain estimate \eqref{e2.37} with a constant
$C_5(1,\mu)= d_0^{-1} + \wt{C}_5(1,\mu) $.
\end{proof}

The {\sl effective operator} $\A^{0}$ is introduced as a self-adjoint operator in $L_2(\mathbb R^d)$ which is
generated by form \eqref{e1.3} with a constant coefficient $\mu^0$ given by \eqref{2.33}.
It is well-known that
\begin{equation}
\label{eff_op}
\A^{0}:= \A(\alpha,\mu^0) = \mu^0 \A_0(\alpha) = \mu^0 c_0(d,\alpha) (- \Delta)^\gamma,
\quad \operatorname{Dom} \A^0 = H^{\alpha}(\R^d).
\end{equation}

With the help of the unitary Gelfand transform the operator  $\A^0$ can be expanded into a direct integral as follows:
\begin{equation}
\label{direct_int}
\A^0 = {\mathcal G}^*\Bigl( \int_{\wt{\Omega}} \oplus \A^0(\bxi)\,d\bxi  \Bigr) {\mathcal G};
\end{equation}
here
$$
\A^0(\bxi) = \A(\bxi;\alpha,\mu^0) = \mu^0 \A_0(\bxi;\alpha) = \mu^0 c_0(d,\alpha) | \D + \bxi|^{\alpha},  \quad \Dom{\mathbb A}^0(\bxi) =  \wt{H}^{\alpha}(\Omega);
$$
see  \eqref{e1.11} and \eqref{e1.14} for the details.

As a direct consequence of Theorem \ref{teor2.2} we have
\begin{theorem}\label{teor3.3}
Let conditions \eqref{e1.1} and \eqref{e1.2} be fulfilled, and assume that $0 < \alpha < 2$.
Then for all $\varepsilon>0$ and $\bxi \in \wt{\Omega}$ the following estimate holds:
\begin{equation}\label{e2.37_2}
\left\|(\A(\bxi)+\varepsilon^{\alpha}I)^{-1}- (\A^0(\bxi)+\varepsilon^{\alpha}I)^{-1}
\right\|_{L_2(\Omega) \to L_2(\Omega)}\le
{\mathrm C}(\alpha,\mu)  \begin{cases} 1, &\hbox{if }  0 < \alpha <1,\\
(1 + | \operatorname{ln} \eps|)^2, &\hbox{if }  \alpha =1,
\\ \eps^{2 - 2\alpha}, &\hbox{if }  1< \alpha < 2;
\end{cases}
\end{equation}
here the constant ${\mathrm C}(\alpha,\mu)$ depends only
on $d$, $\alpha$, $\mu_-$, $\mu_+$.
\end{theorem}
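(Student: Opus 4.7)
The plan is to deduce the estimate on all of $\wt\Omega$ by splitting into the regions $|\bxi|\le\delta_0(\alpha,\mu)$ and $|\bxi|>\delta_0(\alpha,\mu)$, and by comparing both resolvents against the common fiber model $(\mu^0V_\alpha(\bxi)+\eps^\alpha)^{-1}P$ in the former region.

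First I would unpack the structure of the effective fiber operator $\A^0(\bxi)=\mu^0 c_0(d,\alpha)|\D+\bxi|^\alpha$. By Lemma~\ref{lem1.2} and \eqref{e1.14} it is diagonalized by the basis $\{e^{2\pi i\langle\n,\cdot\rangle}\}_{\n\in\Z^d}$ with eigenvalues $\mu^0c_0(d,\alpha)|2\pi\n+\bxi|^\alpha$. Since $\Ran P=\mathcal L\{\1_\Omega\}$ is precisely the $\n=\mathbf 0$ eigenspace and the corresponding eigenvalue equals $\mu^0V_\alpha(\bxi)$, this gives the exact decomposition
\begin{equation*}
(\A^0(\bxi)+\eps^\alpha I)^{-1}=(\mu^0V_\alpha(\bxi)+\eps^\alpha)^{-1}P+(\A^0(\bxi)+\eps^\alpha I)^{-1}P^\perp.
\end{equation*}
On $\Ran P^\perp$ inequality \eqref{1.18b} together with $\mu^0\ge\mu_-$ produces the uniform bound $\|(\A^0(\bxi)+\eps^\alpha I)^{-1}P^\perp\|\le(\mu^0c_0(d,\alpha)\pi^\alpha)^{-1}\le d_0^{-1}$ for all $\bxi\in\wt\Omega$ and $\eps>0$.

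For $|\bxi|\le\delta_0(\alpha,\mu)$ the triangle inequality then gives
\begin{equation*}
\|(\A(\bxi)+\eps^\alpha I)^{-1}-(\A^0(\bxi)+\eps^\alpha I)^{-1}\|\le\|(\A(\bxi)+\eps^\alpha I)^{-1}-(\mu^0V_\alpha(\bxi)+\eps^\alpha)^{-1}P\|+d_0^{-1},
\end{equation*}
and the first summand is exactly what Theorem~\ref{teor2.2} controls by the right-hand side of \eqref{e2.37_2}. The additive constant $d_0^{-1}$ is absorbed into a larger constant since in the cases $0<\alpha\le1$ the estimate of Theorem~\ref{teor2.2} is bounded below by a positive constant uniformly in $\eps$, while in the case $1<\alpha<2$ one checks that $d_0^{-1}\le C\eps^{2-2\alpha}$ for small $\eps$ and, for $\eps$ large, the crude bound $\|(\A(\bxi)+\eps^\alpha I)^{-1}\|+\|(\A^0(\bxi)+\eps^\alpha I)^{-1}\|\le2\eps^{-\alpha}$ dominates $d_0^{-1}$ and is itself beneath $\eps^{2-2\alpha}$.

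In the complementary region $|\bxi|>\delta_0(\alpha,\mu)$ no perturbation is needed. Proposition~\ref{prop1.5} gives the fiberwise bound $\A(\bxi)\ge\mu_-c_0(d,\alpha)|\bxi|^\alpha\ge\mu_-c_0(d,\alpha)\delta_0^\alpha$, and Lemma~\ref{lem1.2} provides the analogous lower bound for $\A^0(\bxi)$ with $\mu^0\ge\mu_-$ in place of $\mu_-$. Consequently each resolvent has operator norm at most $(\mu_-c_0(d,\alpha)\delta_0^\alpha)^{-1}$, and the triangle inequality produces a uniform constant estimate that fits under each of the three cases in \eqref{e2.37_2} by the same absorption argument. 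I do not expect any serious obstacle here; the principal work is the case-by-case bookkeeping to verify that all uniform-$\bxi$ contributions lie beneath the stated right-hand side in each regime of $\alpha$ and to write down an explicit expression for ${\mathrm C}(\alpha,\mu)$ in terms of $C_5(\alpha,\mu)$ and $d_0$.
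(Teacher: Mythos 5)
Your proposal is correct and follows essentially the same route as the paper: decompose $(\A^0(\bxi)+\eps^\alpha I)^{-1}$ across $\Ran P$ and $\Ran P^\perp$ (using that $P$ is the $\n=\mathbf 0$ Fourier mode and $\A^0(\bxi)P=\mu^0V_\alpha(\bxi)P$), invoke Theorem~\ref{teor2.2} for $|\bxi|\le\delta_0$, use the uniform lower bounds of Proposition~\ref{prop1.5} and Lemma~\ref{lem1.2} for $|\bxi|>\delta_0$, and absorb the resulting $O(1)$ contributions into the constant case by case. The paper merely reorders these steps (it first extends the comparison with $(\mu^0V_\alpha(\bxi)+\eps^\alpha)^{-1}P$ to all of $\wt\Omega$ and then adds the $(I-P)$ remainder), which is cosmetic; your bookkeeping for absorbing $d_0^{-1}$ under $\eps^{2-2\alpha}$ when $1<\alpha<2$ is sound, though it is slightly cleaner to keep the sharper bound $(\mu^0c_0(d,\alpha)\pi^\alpha+\eps^\alpha)^{-1}$ on $\Ran P^\perp$, from which Young's inequality gives the absorption for all $\eps>0$ without splitting into small and large $\eps$.
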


\begin{proof}
As a strightforward consequence of \eqref{e2.39} and  \eqref{e2.40} we obtain
$$
\begin{aligned}
\left\|(\A(\bxi)+\varepsilon^{\alpha}I)^{-1} \right\| &\le (\mu_- c_0(d,\alpha)\delta_0(\alpha,\mu)^\alpha)^{-1},
\quad \eps>0, \quad \bxi \in \wt{\Omega}, \quad |\bxi| \ge \delta_0(\alpha,\mu),
\\
 \left( \mu^0 V_\alpha (\bxi) + \eps^\alpha \right)^{-1} & \le (\mu_- c_0(d,\alpha)\delta_0(\alpha,\mu)^\alpha)^{-1},
 \quad \eps >0,\quad \bxi \in \wt{\Omega}, \quad |\bxi| \ge \delta_0(\alpha,\mu).
\end{aligned}
$$
Combining these inequalities and estimate \eqref{e2.37} of Theorem \ref{teor2.2}, we conclude that
\begin{equation}
\label{e2.37_all}
\left\|(\A(\bxi)+\varepsilon^{\alpha}I)^{-1}-
( \mu^0 V_\alpha(\bxi) +\varepsilon^{\alpha})^{-1} P \right\|\le
\check{C}_5(\alpha,\mu)
\begin{cases} 1, &\hbox{if }  0 < \alpha <1,\\
(1 + | \operatorname{ln} \eps|)^2, &\hbox{if }  \alpha =1,
\\ \eps^{2 - 2\alpha}, &\hbox{if }  1< \alpha < 2,
\end{cases}
\end{equation}
for all $\varepsilon>0$ and $\bxi \in \wt{\Omega}$;
the constant $\check{C}_5(\alpha,\mu)$ depends on whether  $0 < \alpha \le 1$ or $1< \alpha <2$:
$$
\check{C}_5(\alpha,\mu) =
\left\{
\begin{array}{ll}
\displaystyle
\max \{ C_5(\alpha,\mu), 2 (\mu_- c_0(d,\alpha)\delta_0(\alpha,\mu)^\alpha)^{-1} \},&\hbox{if }0 < \alpha \le 1,\\[1.5mm]
\max \{ C_5(\alpha,\mu), 2 (\mu_- c_0(d,\alpha)\delta_0(\alpha,\mu)^\alpha)^{1-2/\alpha} \},
&\hbox{if }1 < \alpha < 2.
\end{array}
\right.
$$
Due to the evident equality
\begin{equation*}
\label{A0_P}
\A^0(\bxi) P = \mu^0 V_\alpha (\bxi) P,
\end{equation*}
we have
\begin{equation*}
\label{res0_P}
(\A^0(\bxi) + \eps^\alpha I)^{-1}P = \left( \mu^0 V_\alpha(\bxi) + \eps^\alpha \right)^{-1} P.
\end{equation*}
This allows us to rewrite  \eqref{e2.37_all} as follows:
\begin{equation}
\label{e2.37_all_other}
\left\| (\A(\bxi)+\varepsilon^{\alpha}I)^{-1}- (\A^0(\bxi)+\varepsilon^{\alpha}I)^{-1}P
 \right\|\le
\check{C}_5(\alpha,\mu)\begin{cases} 1, &\hbox{if }  0 < \alpha <1,\\
(1 + | \operatorname{ln} \eps|)^2, &\hbox{if }  \alpha =1,
\\ \eps^{2 - 2\alpha}, &\hbox{if }  1< \alpha < 2,
\end{cases}
\end{equation}
for all $\varepsilon>0$ and $\bxi \in \wt{\Omega}$.
Using the discrete Fourier transform it can be shown that
\begin{equation*}
\label{discrete}
\left\| (\A^0(\bxi)+\varepsilon^{\alpha}I)^{-1}(I-P) \right\| = \sup_{0 \ne \n \in \Z^d}
(\mu^0 V_\alpha(2\pi \n+\bxi) + \eps^\alpha )^{-1}
\le (\mu_- c_0(d,\alpha) \pi^\alpha + \eps^\alpha)^{-1};
\end{equation*}
here we also used  \eqref{e1.16}  and the inequality $|2\pi \n+\bxi| \ge \pi$ which is valid for all $\bxi \in \wt{\Omega}$
and $0 \ne \n \in \Z^d$.
Therefore,
$$
\left\| (\A^0(\bxi)+\varepsilon^{\alpha} I)^{-1}(I-P) \right\| \le (\mu_- c_0(d,\alpha) \pi^\alpha)^{-1},
\ \ \varepsilon>0,\ \ \bxi \in \wt{\Omega}.
$$
Now the desired estimate \eqref{e2.37_2} follows from the latter inequality and \eqref{e2.37_all_other};
the constant $\mathrm{C}(\alpha,\mu)$ is equal to $\check{C}_5(\alpha,\mu) + (\mu_- c_0(d,\alpha) \pi^\alpha)^{-1}$,
if  $0< \alpha \le 1$, and equal to $\check{C}_5(\alpha,\mu) + (\mu_- c_0(d,\alpha) \pi^\alpha)^{1-2/\alpha}$, if $1< \alpha < 2$.
\end{proof}

\subsection{Approximation of the resolvent $(\A+\varepsilon^{\alpha}I)^{-1}$}

\begin{theorem}\label{teor3.6}
Let conditions \eqref{e1.1} and \eqref{e1.2} be fulfilled, and assume that   $0 < \alpha < 2$.
Then for all $\eps >0$  the following estimate holds:
\begin{equation}\label{th3.6_1}
\left\|(\A+\varepsilon^{\alpha}I)^{-1}- (\A^0+\varepsilon^{\alpha}I)^{-1}
\right\|_{L_2(\R^d) \to L_2(\R^d)}\le
{\mathrm C}(\alpha,\mu) \begin{cases} 1, &\hbox{if }  0 < \alpha <1,\\
(1 + | \operatorname{ln} \eps|)^2, &\hbox{if }  \alpha =1,
\\ \eps^{2 - 2\alpha}, &\hbox{if }  1< \alpha < 2;
\end{cases}
\end{equation}
the constant ${\mathrm C}(\alpha,\mu)$  depends only on $d$, $\alpha$, $\mu_-$ and $\mu_+$.
\end{theorem}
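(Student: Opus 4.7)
The plan is to reduce the global estimate in $L_2(\R^d)$ to the fiberwise estimate of Theorem~\ref{teor3.3} via the Gelfand transform. Since $\A$ and $\A^0$ are both periodic operators, the unitary Gelfand transform $\mathcal G$ diagonalizes them into the direct integrals \eqref{e1.15} and \eqref{direct_int}. The resolvent $(\A+\eps^\alpha I)^{-1}$ for $\eps>0$ inherits the same fiber structure, so
\begin{equation*}
(\A+\eps^\alpha I)^{-1}-(\A^0+\eps^\alpha I)^{-1}
= \mathcal G^*\!\Bigl(\int_{\wt\Omega}\oplus \bigl[(\A(\bxi)+\eps^\alpha I)^{-1}-(\A^0(\bxi)+\eps^\alpha I)^{-1}\bigr]\,d\bxi\Bigr)\mathcal G.
\end{equation*}
First I would verify this identity by checking that both sides, applied to a fixed $u\in L_2(\R^d)$, produce the same element of $L_2(\wt\Omega\times\Omega)$ after transformation; this is immediate from the definition of $\A(\bxi)$, $\A^0(\bxi)$ via the corresponding closed quadratic forms in Lemma~\ref{lem1.3}.

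Next I would invoke the standard fact that the operator norm of a direct integral of bounded operators equals the essential supremum of the fiber norms, together with the unitarity of $\mathcal G$. This gives
\begin{equation*}
\bigl\|(\A+\eps^\alpha I)^{-1}-(\A^0+\eps^\alpha I)^{-1}\bigr\|_{L_2(\R^d)\to L_2(\R^d)}
= \esssup_{\bxi\in\wt\Omega}\bigl\|(\A(\bxi)+\eps^\alpha I)^{-1}-(\A^0(\bxi)+\eps^\alpha I)^{-1}\bigr\|_{L_2(\Omega)\to L_2(\Omega)}.
\end{equation*}
The right-hand side is then controlled by Theorem~\ref{teor3.3}, whose estimate \eqref{e2.37_2} is uniform in $\bxi\in\wt\Omega$ with a constant depending only on $d$, $\alpha$, $\mu_-$, $\mu_+$. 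Combining the two displays yields \eqref{th3.6_1} with the same constant $\mathrm C(\alpha,\mu)$.

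There is essentially no obstacle: the proof is a routine transcription of the fiberwise bound to the global bound through the direct integral representation. The only point requiring a moment of care is confirming that the resolvent respects the direct integral decomposition — this is fine for $\eps>0$ because $\eps^\alpha$ lies in the resolvent set of every fiber $\A(\bxi)$ and $\A^0(\bxi)$ (the spectra are non-negative), and the fiber norms of the resolvents are uniformly bounded in $\bxi$. Hence the proof consists essentially of one displayed identity followed by one application of Theorem~\ref{teor3.3}.
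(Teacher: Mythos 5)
Your proposal is correct and takes essentially the same approach as the paper: represent the difference of resolvents as a direct integral over $\bxi\in\wt\Omega$ via the Gelfand transform, observe that the operator norm equals the (essential) supremum of the fiber norms, and invoke the uniform fiberwise bound of Theorem~\ref{teor3.3}.
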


\begin{proof}
It follows from  \eqref{e1.15} and \eqref{direct_int} that  the operator under the norm sigh in  \eqref{th3.6_1}
admits the reporesentation
$$
|(\A+\varepsilon^{\alpha}I)^{-1}- (\A^0+\varepsilon^{\alpha}I)^{-1}
= {\mathcal G}^*\Bigl( \int_{\wt{\Omega}} \oplus \big((\A(\bxi)+\varepsilon^{\alpha}I)^{-1}- (\A^0(\bxi)+\varepsilon^{\alpha}I)^{-1}\big)\,d\bxi  \Bigr) {\mathcal G}.
$$
Therefore,
\begin{multline*}
\left\|(\A+\varepsilon^{\alpha}I)^{-1}- (\A^0+\varepsilon^{\alpha}I)^{-1}
\right\|_{L_2(\R^d) \to L_2(\R^d)}
\\
=
\sup_{\bxi \in \wt{\Omega}} \left\|(\A(\bxi)+\varepsilon^{\alpha}I)^{-1}- (\A^0(\bxi)+\varepsilon^{\alpha}I)^{-1}
\right\|_{L_2(\Omega) \to L_2(\Omega)},
\end{multline*}
and estimate \eqref{th3.6_1} follows from \eqref{e2.37_2}.
\end{proof}

\section{Homogenization of L\'evy-type operators}

\subsection{Main result}
Assuming that conditions \eqref{e1.1} and \eqref{e1.2} are satisfied, and that  $0< \alpha <2$,
we consider the closed quadratic form
\begin{equation*}
a_{\varepsilon}[u,u] = \frac{1}{2} \intop_{\R^d}  \intop_{\R^d} d\x \, d\y \, \mu^\eps(\x,\y)
\frac{|u(\x)-u(\y)|^2}{|\x - \y|^{d+\alpha}},\ \ u\in H^\gamma(\R^d),\ \ \varepsilon>0,
\end{equation*}
with $\mu^\eps(\x,\y):= \mu(\x/\varepsilon,\y/\varepsilon)$, and the family of self-adjoint operators $\A_\eps$ in  $L_{2}(\R^d)$ that are generated by this form.
We recall that the effective operator $\A^0$ and the effective coefficient $\mu^0$  are defined in \eqref{eff_op} and \eqref{2.33}, respectively.

With the help of scaling transformation, we deduce from Theorem  \ref{teor3.6} the following result:
\begin{theorem}
\label{teor3.1}
Let conditions \eqref{e1.1} and \eqref{e1.2} be fulfilled, and assume that  $0< \alpha <2$.
Then for all $\eps>0$ the following estimate holds:
\begin{equation}\label{e3.1}
\|(\A_{\varepsilon}+I)^{-1}-(\A^{0}+I)^{-1}\|_{L_2(\R^d) \to L_2(\R^d)}\le
{\mathrm C}(\alpha,\mu) \begin{cases} \eps^\alpha, &\hbox{if } 0 < \alpha <1,\\
\eps (1 + | \operatorname{ln} \eps|)^2, &\hbox{if } \alpha =1,
\\ \eps^{2 - \alpha}, &\hbox{if } 1< \alpha < 2.
\end{cases}
\end{equation}
The constant  ${\mathrm C}(\alpha,\mu)$ depends only on  $d$, $\alpha$, $\mu_-$ and $\mu_+$.
\end{theorem}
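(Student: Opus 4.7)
The plan is to deduce estimate \eqref{e3.1} directly from Theorem \ref{teor3.6} by means of the scaling identity \eqref{e3.2_Intro}. First, I would introduce the unitary scaling operator $T_\eps : L_2(\R^d) \to L_2(\R^d)$ defined by $(T_\eps v)(\x) = \eps^{-d/2} v(\x/\eps)$, and verify by change of variables $\x \mapsto \eps \x'$, $\y \mapsto \eps \y'$ in the form $a_\eps$ that
$$a_\eps[T_\eps v, T_\eps v] = \eps^{-\alpha} a[v,v], \qquad v \in H^\gamma(\R^d).$$
The cancellation is clean: the Jacobian $\eps^{2d}$ from $d\x\,d\y$ combines with $\eps^{-d}$ from $|T_\eps v(\x) - T_\eps v(\y)|^2$ and $\eps^{-(d+\alpha)}$ from the singular kernel to give the factor $\eps^{-\alpha}$, while the periodicity of $\mu$ reduces $\mu(\x/\eps,\y/\eps)$ to $\mu(\x',\y')$. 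This identity implies the operator relation $\A_\eps = \eps^{-\alpha} T_\eps \A T_\eps^*$. The same computation carried out with the constant coefficient $\mu^0$ yields $\A^0 = \eps^{-\alpha} T_\eps \A^0 T_\eps^*$; here the constancy of $\mu^0$ plays an essential role, since it ensures that the scaled operator is of the same form.

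Next, both resolvents can be rewritten in the rescaled form:
$$(\A_\eps + I)^{-1} = \eps^\alpha T_\eps (\A + \eps^\alpha I)^{-1} T_\eps^*, \qquad (\A^0 + I)^{-1} = \eps^\alpha T_\eps (\A^0 + \eps^\alpha I)^{-1} T_\eps^*.$$
Subtracting and using the unitarity of $T_\eps$, which preserves the operator norm, I would obtain the scaling identity \eqref{e3.2_Intro}:
$$\|(\A_\eps + I)^{-1} - (\A^0 + I)^{-1}\|_{L_2(\R^d) \to L_2(\R^d)} = \eps^\alpha \|(\A + \eps^\alpha I)^{-1} - (\A^0 + \eps^\alpha I)^{-1}\|_{L_2(\R^d) \to L_2(\R^d)}.$$

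Finally, I would insert bound \eqref{th3.6_1} from Theorem \ref{teor3.6} into the right-hand side. The three regimes match exactly the cases in \eqref{e3.1}: for $0 < \alpha < 1$ the multiplier $\eps^\alpha$ times the constant bound yields $\eps^\alpha$; for $\alpha = 1$ the product $\eps \cdot (1 + |\ln \eps|)^2$ gives the logarithmic term; and for $1 < \alpha < 2$ the computation $\eps^\alpha \cdot \eps^{2 - 2\alpha} = \eps^{2-\alpha}$ produces the third bound. All substantive difficulties—the threshold analysis of the fibre family $\A(\bxi)$ near $\bxi = \mathbf{0}$, the approximation of the spectral projector $F(\bxi)$ and of $\A(\bxi)F(\bxi)$ in terms of $\mu^0 V_\alpha(\bxi) P$, and the assembly via the direct-integral decomposition—have already been overcome in Sections 2--4, so there is no real obstacle at this last step; it is purely a homogeneity argument.
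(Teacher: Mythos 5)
Your proof is correct and follows essentially the same route as the paper: verify the homogeneity identity $a_\eps[T_\eps v, T_\eps v] = \eps^{-\alpha} a[v,v]$ (likewise for $\mu^0$), convert it to the resolvent scaling identity via unitarity of $T_\eps$, and then plug in the bound from Theorem~\ref{teor3.6}. The only difference is a harmless change of convention (your $T_\eps$ is the paper's $T_\eps^*$), so the argument is the same.
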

\begin{proof}
Consider the family of scaling transformations
\begin{equation*}
T_{\varepsilon}u(\x):=\varepsilon^{d/2}u(\varepsilon \x),\ \
\x\in\R^d,\ \ u\in L_{2}(\R^d), \ \ \eps >0.
\end{equation*}
A simple calculation shows that
$$
\A_{\varepsilon} = \varepsilon^{-\alpha} T_{\varepsilon}^* \A T_{\varepsilon},\quad \eps >0.
$$
Therefore,
\begin{equation}\label{e3.2}
(\A_{\varepsilon}+I)^{-1}= T_{\varepsilon}^{*}\varepsilon^{\alpha}(\A+\varepsilon^{\alpha}I)^{-1}T_{\varepsilon},
\ \ \varepsilon>0.
\end{equation}
Similar relations for the effective operator read
$$
\A^0 = \varepsilon^{-\alpha} T_{\varepsilon}^* \A^0 T_{\varepsilon},\quad \eps >0,
$$
and
\begin{equation}\label{e3.2_eff}
(\A^0+I)^{-1}= T_{\varepsilon}^{*}\varepsilon^{\alpha}(\A^0+\varepsilon^{\alpha}I)^{-1}T_{\varepsilon},
\ \ \varepsilon>0.
\end{equation}
From  \eqref{e3.2} and \eqref{e3.2_eff} taking into account the unitarity of the operator $T_\eps$ we obtain
$$
\|(\A_{\varepsilon}+I)^{-1}-(\A^{0}+I)^{-1}\|_{L_2(\R^d) \to L_2(\R^d)} =
\eps^\alpha \|(\A + \varepsilon^\alpha I)^{-1}-(\A^{0}+\eps^\alpha I)^{-1}\|_{L_2(\R^d) \to L_2(\R^d)}.
$$
Due to this relation the required estimate  \eqref{e3.1} follows from Theorem \ref{teor3.6}.
\end{proof}

\subsection{Concluding remarks}

1. An estimate of order $O(\eps^\alpha)$ in  \eqref{e3.1} is the best estimate that can be achieved if the homogenization  process is interpreted as a threshold effect at the edge of the spectrum. This follows from estimate \eqref{e2.38} and relation \eqref{e3.2}.  Thus, in the case  $0 < \alpha <1$ we obtain the best possible  precision by taking only the leading term
of the approximation.  For $\alpha$ from the interval $[1,2)$ the precision in \eqref{e3.1} is not the best possible any more,
it gets worse as $\alpha$ increases.  In this case the approximation can be improved by using correctors.
The authors are going to address this problem in a separate work.

2. Analyzing the expressions for the constants in the estimates that were used in deriving \eqref{e3.1} one can easily check
that the constant  ${\mathrm C}(\alpha,\mu)$ in \eqref{e3.1} depends only on the parameters
$d$, $\alpha$, $\mu_-$ and $\mu_+$. However, this constant is defined by different expressions
depending on whether   $0< \alpha <1$, or $\alpha=1$, or $1< \alpha <2$.
It should also be  noted that ${\mathrm C}(\alpha,\mu)$  tends to infinity, as $\alpha$ approaches $1$, both from the left
and from the right, and as $\alpha$ approaches $2$.

3. Theorem \ref{teor3.1} remains valid, if we replace the periodic lattice $\Z^d$ with an arbitrary periodic lattice in $\R^d$.
In this case the constant in estimate \eqref{e3.1} depends not only on the above mentioned parameters but also
on the characteristics of the lattice.

\bigskip\noindent
{\bf Acknowlegements. }  An essential part of this work was carried out during the visit of A. Piatnitski and E. Zhizhina
to the Euler International Mathematical Institute in St. Petersburg. The visit was supported by the grant of the Ministry
of Science and Higher Education of RF, agreement 075-15-2022-289 of 06.04.2022.   The work of A. Piatnitski and 
E.Zhizhina was partially supported by the project ``Pure Mathematics in Norway'' and the UiT Aurora project MASCOT.

\end{document}